\author{Paul \textsc{Poncet}}
\address{CMAP, \'{E}cole Polytechnique, Route de Saclay, 91128 Palaiseau Cedex, France \\
and INRIA, Saclay--\^{I}le-de-France}
\email{poncet@cmap.polytechnique.fr}
\def\twoheaduparrow{\rlap{$\uparrow$}\raise.5ex\hbox{$\uparrow$}}%%
\newcommand{\reels}{\mathbb R}
\newtheorem{theorem}{Theorem}[section]
\newtheorem{corollary}[theorem]{Corollary}
\newtheorem{proposition}[theorem]{Proposition}
\newtheorem{lemma}[theorem]{Lemma}
\theoremstyle{definition}
\newtheorem{definition}[theorem]{Definition}
\newtheorem{example}[theorem]{Example}
\newtheorem{remark}[theorem]{Remark}
\newtheorem{problem}[theorem]{Problem}
\newenvironment{acknowledgements}[1][]{\par\vspace{0.5cm}\noindent\textbf{Acknowledgements#1.} }{\par}
\begin{document}

\title{How regular can maxitive measures be?}

\date{\today}

\subjclass[2010]{Primary 28B15, %Set functions, measures and integrals with values in ordered spaces
                         28C15; %Set functions and measures on topological spaces (regularity of measures, etc.)
                 Secondary 06B35, %Continuous lattices and posets, applications 
                           03E72, %Fuzzy set theory
                           49J52} %Nonsmooth analysis

\keywords{maxitive measures; optimal measures; inner-continuity; outer-continuity; regularity; complete maxitivity; cardinal density; continuous posets; continuous lattices; domains; sober spaces}

\begin{abstract}
We examine domain-valued maxitive measures defined on the Borel subsets of a topological space. Several characterizations of regularity of maxitive measures are proved, depending on the structure of the topological space. 
Since every regular maxitive measure is completely maxitive, this yields sufficient conditions for the existence of a cardinal density. 
We also show that every outer-continuous maxitive measure can be decomposed as the supremum of a regular maxitive measure and a maxitive measure that vanishes on compact subsets under appropriate conditions. 
\end{abstract}

\maketitle

%%%%%%%%%%%%%%%%%%%%%%
%%%%%%%%%%%%%%%%%%%%%%
%%%%%%%%%%%%%%%%%%%%%%
%%%%%%%%%%%%%%%%%%%%%%
\section{Introduction}

Maxitive measures, also known as \textit{idempotent measures}, 
are defined similarly to finitely additive measures with the supremum operation $\oplus$ in place of the addition $+$. 
In \cite[Chapter~I]{Poncet11}, we studied these measures and the related integration theory based on the  \textit{Shilkret integral}. 
We were especially interested in the idempotent analogue of the Radon--Nikodym theorem. 
In this process, we limited our considerations to maxitive measures taking values in the set of nonnegative real numbers. However, this may be quite restrictive for further applications. 

Let us have a look at classical analysis to understand why. 
In this framework, it is well known that the Radon--Nikodym theorem holds on certain classes of Banach spaces (e.g.\ reflexive spaces or separable dual spaces). To formulate such a theorem one needs to extend first the Lebesgue integral to measurable functions taking values in these spaces. This is what the Bochner integral does. 
More generally, a Banach space $B$ has the \textit{Radon--Nikodym property} if, for all measured spaces $(\mathit{\Omega},  \mathrsfs{A}, \mu)$ with finite measure $\mu$ and for all $B$-valued measures $m$ on $\mathrsfs{A}$, absolutely continuous with respect to $\mu$ and of bounded variation, there is a Bochner integrable map $f : \mathit{\Omega} \rightarrow B$ such that $m(A) = \int_{A} f \, d\mu$, for all $A \in \mathrsfs{A}$.  This property has been at the core of a great amount of research and the source of many discoveries on the structure of Banach spaces. 

One hopes to obtain analogous results in the framework of idempotent analysis. Idempotent analysis is a well established theory dating back to Zimmermann \cite{Zimmermann77} and popularized by Maslov \cite{Maslov87}; the term was coined by Kolokoltsov and made its first appearance in the papers by Kolokoltsov and Maslov \cite{Kolokoltsov89a} and \cite{Kolokoltsov89b}. 
So one must have such a powerful tool as the Bochner integral available, that would integrate $M$-valued functions, for some ``idempotent space'' $M$. One could think of $M$ e.g.\ as a complete module over the idempotent semifield $\reels^{\max}_+ = (\reels_+, \max, \times)$, but the appropriate structure still needs to be clarified. 
Jonasson \cite{Jonasson98} on the one hand, Akian \cite{Akian99} on the other hand, both worked in this direction. However, Akian chose to integrate dioid-valued (rather than module-valued) functions, and Jonasson remained in the additive paradigm. 

In order to prepare these kinds of future applications -which are not directly in the scope of this paper- we study \textit{domain}-valued maxitive measures after Akian. 
A domain is a partially ordered space with nice approximation properties. 
Well-known examples of domains are $\reels_+$, $\overline{\reels}_+$, and $[0, 1]$, which are commonly used as target sets for maxitive measures. Many attempts were made for replacing them by more general ordered structures (see Maslov \cite{Maslov87}, Greco \cite{Greco87}, Liu and Zhang \cite{Liu94}, de Cooman et al.\ \cite{deCooman01}, Kramosil \cite{Kramosil05a}). Nevertheless, the importance of supposing these ordered structures \textit{continuous} in the sense of domain theory for applications to idempotent analysis or fuzzy set theory has been identified lately. Pioneers 
were Akian \cite{Akian95, Akian99} and Heckmann and Huth \cite{Heckmann98b, Heckmann98}. See Lawson \cite{Lawson04b} for a survey on the use of domain theory in idempotent mathematics. 

In the case of Banach spaces, it must also be remarked that the Radon--Nikodym property is deeply linked with the \textit{Krein--Milman property}, which says that every bounded closed convex subset is the closed convex hull of its extreme points.  It was proved that the latter property implies the Radon--Nikodym property (see e.g.\ Benyamini and Lindenstrauss \cite[Theorem~5.13]{Benyamini00}), and the converse statement remains an open problem. 
Similar problems could be raised in the idempotent case. 

Another application we have in mind is the idempotent analogue of the Choquet integral representation theorem. In classical analysis, 
regular measures play a key role; in \cite{Poncet11} we have seen that this is also the case in the idempotent framework. This explains why we deal here with regularity properties of maxitive measures, defined on the Borel $\sigma$-algebra $\mathrsfs{B}$ of some topological space. 
On a Hausdorff space, a maxitive measure is \textit{regular} if it satisfies both following conditions for all $B \in \mathrsfs{B}$: 
\begin{itemize}
	\item \textit{inner-continuity}: 
	$$
	\nu(B) = \bigoplus_{K \in \mathrsfs{K}, K \subset B} \nu(K), 
	$$ 
	\item \textit{outer-continuity}: 
	$$
	\nu(B) = \bigwedge_{G \in \mathrsfs{G}, G \supset B} \nu(G), 
	$$ 
\end{itemize}
where $\bigoplus A$ (resp.\ $\bigwedge A$) is the supremum (resp.\ infimum) of a set $A$, and $\mathrsfs{K}$ denotes the collection of compact subsets and $\mathrsfs{G}$ that of open subsets. 
We prove a series of conditions that guarantee inner- and/or outer-continuity of maxitive measures. 
This generalizes results due to Norberg \cite{Norberg86}, Murofushi and Sugeno \cite{Murofushi93},  Vervaat \cite{Vervaat97}, O'Brien and Watson \cite{OBrien98}, Akian \cite{Akian99}, Puhalskii \cite{Puhalskii01},  Miranda et al.\ \cite{Miranda04}. 

Regularity is an important feature of maxitive measures for a different reason: a regular maxitive measure $\nu$ admits a cardinal density in the sense that, for some map $c$, we have 
$$
\nu(B) = \bigoplus_{x \in B} c(x), 
$$
for all Borel sets $B$. Numerous authors have been interested in conditions that imply the existence of such a density, hence we make the choice to revisit this problem as exhaustively as possible. 
	
For some of our proofs we follow the steps of Rie{\v{c}}anov{\'a} \cite{Riecanova84}, who focused on the regularity of certain $S$-valued set functions, for some conditionally complete ordered semigroup $S$ satisfying a series of conditions, including the separation of points by continuous functionals. We do not use directly her results, for our approach better suits the special case of domain-valued optimal measures. Indeed, a domain is not necessarily a semigroup, nor is it conditionally complete in general. 

As a last step, we prove a decomposition theorem for outer-continuous maxitive measures, that takes the following form: 
\begin{equation*}
\nu = \lfloor \nu \rfloor \oplus \bot\nu 
\end{equation*}
where $\lfloor \nu \rfloor$ is a regular maxitive measure called the \textit{regular part} of $\nu$, and $\bot\nu$ is a maxitive measure vanishing on compact subsets under appropriate conditions. %, a property that we call \textit{singularity}. 
This has the consequence that $\nu$ is regular if and only if $\bot\nu$ is zero. 

The paper is organized as follows. 
In Section~\ref{sec:domain} we recall basic domain theoretical concepts. 
In Section~\ref{sec:ts} we recall the notion of \textit{$L$-valued} maxitive measure, for some domain $L$. Then we specifically consider maxitive measures defined on the collection of Borel subsets of some topological space; we suppose that the space at stake is \textit{quasisober}, a condition that generalizes the usual Hausdorff assumption. %do not make the Hausdorff assumption and rather suppose that the space at stake is  \textit{quasisober}. 
We prove that regularity and tightness of maxitive measures are linked with different conditions such as existence of a cardinal density, complete maxitivity, smoothness with respect to compact saturated or closed subsets, inner-continuity. 
We focus on the case where the topological space is metrizable and the maxitive measure is optimal in Section~\ref{sec:metriz}. 
In Section~\ref{sec:decompose} we prove the announced decomposition theorem.

\section{Reminders of domain theory}\label{sec:domain}

A nonempty subset $F$ of a partially ordered set or \textit{poset} $(L,\leqslant)$ is \textit{filtered} if, for all $r, s \in F$, one can find $t \in F$ such that $t \leqslant r$ and $t \leqslant s$. A \textit{filter} of $L$ is a filtered subset $F$ such that $F = \{ s \in L : \exists r \in F, r \leqslant s \}$. 
We say that $s \in L$ is \textit{way-above} $r \in L$, written $s \gg r$, if, for every filter $F$ with an infimum $\bigwedge F$, $r \geqslant \bigwedge F$ implies $s \in F$. 
The \textit{way-above relation}, useful for studying lattice-valued upper-semicontinuous functions (see Gerritse \cite{Gerritse97} and Jonasson \cite{Jonasson98}), is dual to the usual \textit{way-below relation}, but is more appropriate in our context. Coherently, our notions of continuous posets and domains 
are dual to the traditional ones. 
We thus say that the poset $L$ is \textit{continuous} if $\twoheaduparrow r := \{ s \in L : s \gg r \}$ is a filter and $r = \bigwedge \twoheaduparrow r$, for all $r \in L$. 
Also, $L$ is \textit{filtered-complete} if every filter has an infimum. 
A \textit{domain} is then a filtered-complete continuous poset. 
In this paper, every domain considered will have a bottom element $0$. 
A poset $L$ has the \textit{interpolation property} if, for all $r, s \in L$ with $s \gg r$, there exists some $t \in L$ such that $s \gg t  \gg r$. 
In continuous posets it is well known that the interpolation property holds, see e.g.\ \cite[Theorem~I-1.9]{Gierz03}. This is a crucial feature that is behind many important results of the theory. 
For more background on domain theory, see the monograph by Gierz et al.\ \cite{Gierz03}.  

\begin{remark}
To show that an inequality $r' \geqslant r$ holds in a continuous poset $L$, it suffices to prove that, whenever $s \gg r'$, we have $s \geqslant r$. This argument will be used many times in this work. 
\end{remark}

\section{Maxitive measures on topological spaces}\label{sec:ts}

\subsection{Preliminaries on topological spaces}

Let $E$ be a topological space. We denote by $\mathrsfs{G}$ (resp.\ $\mathrsfs{F}$) the collection of open (resp.\ closed) subsets of $E$. 
The interior (resp.\ the closure) of a subset $A$ of $E$ is written $A^{o}$ (resp.\ $\overline{A}$). 
The \textit{specialization order} on $E$ is the quasiorder $\leqslant$ defined on $E$ by $x \leqslant y$ if $x \in G$ implies $y \in G$, for all open subsets $G$. 
A subset $C$ of $E$ is \textit{irreducible} if it is nonempty and, for all closed subsets $F, F'$ of $E$, $C \subset F \cup F'$ implies $C \subset F$ or $C \subset F'$. The closure of a singleton yields an irreducible closed set. 
We say that $E$ is \textit{quasisober} if every irreducible closed subset is the closure of a singleton. 
A subset $A$ of $E$ is \textit{saturated} if it is an intersection of open subsets. The \textit{saturation} of $A$, written $\uparrow\!\! A$, is the intersection of all open subsets containing $A$, and we have 
$$
\uparrow\!\! A  = \bigcap_{G \in \mathrsfs{G}, G \supset A} G = \{ x \in E : \exists a \in A, a \leqslant x \}. 
$$
If $A$ is a singleton $\{x\}$, we write $\uparrow\!\! x$ instead of $\uparrow\!\! \{x\}$. 
Note that all open subsets 
are saturated. 

We denote by $\mathrsfs{Q}$ 
the collection of (not necessarily Hausdorff) compact saturated 
subsets of $E$. 
For instance, $\uparrow\!\! x \in \mathrsfs{Q}$, for all $x \in E$. 
We shall need the following theorem, which emphasizes the role of compact saturated subsets for non-Hausdorff spaces. 

\begin{theorem}[Hofmann--Mislove]\label{thm:dual}
In a quasisober topological space, the collection $\mathrsfs{Q}$ of compact saturated subsets is closed under finite unions and filtered intersections. 
Moreover, if $(Q_j)_{j \in J}$ is a filtered family of elements of $\mathrsfs{Q}$ such that $\bigcap_{j \in J} Q_j \subset G$ for some open $G$, then $Q_j \subset G$ for some $j \in J$. 
\end{theorem}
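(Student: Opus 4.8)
The plan is to first clear away the two parts that do not require quasisobriety, and then to concentrate all the work on the final assertion, from which the compactness of filtered intersections will follow almost for free. Recall that a set is saturated exactly when it is an up-set for the specialization order. A finite union of compact sets is compact, and a union of up-sets is again an up-set, so $\mathrsfs{Q}$ is closed under finite unions. Likewise, an arbitrary intersection of up-sets is an up-set, so the filtered intersection $Q := \bigcap_{j \in J} Q_j$ is saturated; the only nontrivial point is that $Q$ is compact. I claim that this, and the last assertion of the theorem, both reduce to the statement that \emph{if $Q \subseteq G$ with $G$ open, then $Q_j \subseteq G$ for some $j$}. Indeed, granting this, if $Q$ is covered by a directed family of open sets $(G_i)$ with union $G$, then $Q \subseteq G$ forces $Q_j \subseteq G$ for some $j$, and the compactness of $Q_j$ together with directedness yields $Q \subseteq Q_j \subseteq G_i$ for some $i$; hence $Q$ is compact.

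So fix an open $G$ with $Q \subseteq G$ and suppose, for contradiction, that $Q_j \not\subseteq G$ for every $j$. Let $\mathrsfs{V}$ be the collection of open sets $V$ such that $Q_j \not\subseteq V$ for all $j$; by assumption $G \in \mathrsfs{V}$. The key observation is that $\mathrsfs{V}$ is stable under directed unions: if every member of a directed family $(V_i)$ lies in $\mathrsfs{V}$ but their union $V$ does not, then $Q_j \subseteq V$ for some $j$, and the compactness of $Q_j$ forces $Q_j \subseteq V_i$ for some $i$, contradicting $V_i \in \mathrsfs{V}$. Consequently every chain in $\{V \in \mathrsfs{V} : V \supseteq G\}$ has an upper bound there, and Zorn's lemma produces a maximal element $V^{*}$; I set $A := E \setminus V^{*}$.

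The heart of the proof, and the step where quasisobriety is indispensable, is to show that $A$ is irreducible closed. It is nonempty, for otherwise $V^{*} = E$ would contain every $Q_j$. If $A \subseteq F \cup F'$ with $F, F'$ closed and $A \not\subseteq F$, $A \not\subseteq F'$, then writing $W = E \setminus F$ and $W' = E \setminus F'$ one has $V^{*} \cup W \supsetneq V^{*}$ and $V^{*} \cup W' \supsetneq V^{*}$, so by maximality of $V^{*}$ there are indices with $Q_{j_1} \subseteq V^{*} \cup W$ and $Q_{j_2} \subseteq V^{*} \cup W'$. Choosing, by the filtered property, some $Q_{j_3} \subseteq Q_{j_1} \cap Q_{j_2}$, I get $Q_{j_3} \subseteq (V^{*} \cup W) \cap (V^{*} \cup W') = V^{*} \cup (W \cap W') \subseteq V^{*}$, the last inclusion because $W \cap W' = E \setminus (F \cup F') \subseteq E \setminus A = V^{*}$. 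This contradicts $V^{*} \in \mathrsfs{V}$, proving irreducibility. Quasisobriety now provides a point $x$ with $A = \overline{\{x\}}$; then $x \notin V^{*} \supseteq G$, and a short argument shows $x \in Q_j$ for every $j$: if $x \notin Q_j$, then since $Q_j$ is an intersection of open sets there is an open $W \supseteq Q_j$ with $x \notin W$, whence $\overline{\{x\}} \subseteq E \setminus W$, i.e.\ $W \subseteq V^{*}$, again contradicting $V^{*} \in \mathrsfs{V}$. Thus $x \in Q \setminus G$, contradicting $Q \subseteq G$, and the claim is proved. The main obstacle is precisely this extraction of an irreducible closed set via Zorn's lemma: everything hinges on the interplay between the filtered and compactness hypotheses, which keep $\mathrsfs{V}$ stable under directed unions and under the filter operations, and quasisobriety, which converts the resulting irreducible closed set back into a point of the intersection.
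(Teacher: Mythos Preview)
Your proof is correct. The paper, however, does not prove this theorem at all: it states it as background and refers to the literature, remarking that the result is a simple corollary of the strong Hofmann--Mislove theorem (the order-isomorphism between compact saturated subsets and Scott-open filters of the frame of opens), and citing Keimel and Paseka for an alternative direct proof.

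What you have written is essentially that direct Keimel--Paseka argument: build, via Zorn, a maximal open set missing every $Q_j$, show its complement is irreducible closed by exploiting the filtered hypothesis, then invoke quasisobriety to extract a generic point lying in every $Q_j$ but outside $G$. Compared to the route the paper alludes to (deduce the statement from the filter--compact correspondence), your approach is more elementary and self-contained, avoiding the Scott topology on $\mathrsfs{G}$ altogether; the trade-off is that the filter approach gives more structural information (the bijection with Scott-open filters), whereas your argument is tailored to exactly the conclusion needed here. Both rely on Zorn in some form, and both make quasisobriety do the same job: convert an irreducible closed set into a point.

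One small presentational remark: in your compactness reduction you silently use that compactness is equivalent to every \emph{directed} open cover admitting a single covering member. This is standard (pass to finite unions of an arbitrary cover), but since you take the trouble to spell out the other elementary steps, a one-line mention would make the reduction airtight.
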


The strong form of the Hofmann--Mislove theorem (see \cite{Hofmann81}) asserts an isomorphism between the family of compact saturated subsets of a quasisober space and the family of Scott-open filters on the lattice of open subsets of the space; Theorem~\ref{thm:dual} is then a simple corollary. Keimel and Paseka \cite{Keimel94} provided another proof, and Kov\'ar \cite{Kovar04} extended the result to generalized topological spaces. See also Jung and S\"{u}nderhauf \cite{Jung96} for an application to proximity lattices, and Norberg and Vervaat \cite{Norberg97b} for an application, in a non-Hausdorff setting, to the theory of capacities which dates back to Choquet \cite{Choquet54}.

\subsection{The Borel $\sigma$-algebra}

Let $E$ be a topological space. 
The \textit{Borel $\sigma$-algebra} of $E$ is the $\sigma$-algebra $\mathrsfs{B}$ generated by $\mathrsfs{G}$ and $\mathrsfs{Q}$; 
its elements are called the \textit{Borel} subsets of $E$. 
We also write $\mathrsfs{K}$ for the collection of compact Borel subsets of $E$. If $E$ is $T_1$ (in particular if $E$ is Hausdorff), then $\mathrsfs{K} = \mathrsfs{Q}$. 
In the case where $E$ is $T_0$, $\mathrsfs{K}$ contains all singletons $\{x\}$, for $\{x\}$ is the intersection of the compact saturated subset $\uparrow\!\! x$ with the closure $\overline{x}$ of $\{x\}$. 
In the general case ($E$ not necessarily $T_0$), we let $[x]$ denote the compact Borel subset $\uparrow\!\! x \cap \overline{x}$. This is the equivalence class of $x$ with respect to the equivalence relation $x \sim y \Leftrightarrow \overline{x} = \overline{y} \Leftrightarrow \uparrow\!\! x = \uparrow\!\! y$. Notice that $\uparrow\!\! [x] = \uparrow\!\! x$ for all $x$. 
The quotient set $E_0 = E /\!\! \sim$ equipped with the quotient topology is then a $T_0$ space, and the quotient map $\pi_0 : x \mapsto [x]$ is continuous. 

\begin{lemma}\label{lem:lemA}
%Let $E$ be a topological space. 
Every saturated subset $A$ of $E$ satisfies $\pi_0^{-1}(\pi_0(A)) = A$, and 
every subset $A'$ of $E_0$ satisfies $\pi_0(\pi_0^{-1}(A')) = A'$. 
\end{lemma}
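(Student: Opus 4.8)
The plan is to treat the two assertions separately, the second being essentially the surjectivity of $\pi_0$ and the first carrying all the content.

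For the second assertion, I would simply recall that for any map $f$ and any subset $A'$ of its codomain one has $f(f^{-1}(A')) = A' \cap f(E)$. Since $\pi_0$ is the quotient map onto $E_0 = E /\!\! \sim$, it is surjective, so $\pi_0(E) = E_0$ and the identity $\pi_0(\pi_0^{-1}(A')) = A'$ follows at once.

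For the first assertion, the inclusion $A \subset \pi_0^{-1}(\pi_0(A))$ holds for every subset $A$ by elementary set theory, so I only need the reverse inclusion. First I would unwind the left-hand side: a point $x$ lies in $\pi_0^{-1}(\pi_0(A))$ precisely when $[x] = [a]$, that is $x \sim a$, for some $a \in A$. Recalling that $x \sim a$ amounts to $\uparrow\!\! x = \uparrow\!\! a$, hence to the conjunction $a \leqslant x$ and $x \leqslant a$ of the two directions of the specialization order, I extract the single inequality $a \leqslant x$. Then I would invoke saturation: since $A$ is saturated it equals its own saturation, $A = \uparrow\!\! A = \{ y \in E : \exists b \in A, b \leqslant y \}$, so $A$ is upward closed for $\leqslant$. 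Applying this to $a \in A$ together with $a \leqslant x$ gives $x \in A$, which yields $\pi_0^{-1}(\pi_0(A)) \subset A$ and hence equality.

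The argument is entirely elementary and presents no real obstacle. The one point worth isolating is the conceptual fact underlying it: saturated subsets are exactly the upward-closed subsets for the specialization order, and these are precisely the unions of $\sim$-equivalence classes, that is, the unions of fibers of $\pi_0$. It is this structural observation that makes $\pi_0^{-1}\pi_0$ act as the identity on saturated sets, and it explains why saturation is the hypothesis that cannot be dropped.
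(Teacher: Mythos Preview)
Your proof is correct and follows essentially the same approach as the paper: surjectivity of $\pi_0$ for the second assertion, and for the first, from $[x]=[a]$ you extract $a \leqslant x$ and then use that a saturated set equals its saturation $\uparrow\!\! A$ to conclude $x \in A$. The paper phrases the key step as ``$x \in \uparrow\!\! a \subset \uparrow\!\! A$'' but this is the same argument.
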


\begin{proof}
The second assertion is due to the surjectivity of $\pi_0$. 
To prove the first assertion, let $A$ be a saturated subset of $E$. 
It is clear that $\pi_0^{-1}(\pi_0(A)) \supset A$. To show that $\pi_0^{-1}(\pi_0(A)) \subset A$, let $x \in \pi_0^{-1}(\pi_0(A))$. Then $\pi_0(x) \in \pi_0(A)$, so there is some $a \in A$ such that $\pi_0(x) = \pi_0(a)$. In particular, $x \in \uparrow\!\! a \subset \uparrow A$. Since $A$ is saturated, we obtain $x \in A$. 
\end{proof}

\begin{lemma}\label{lem:opencompact}
%Let $E$ be a topological space. 
The image of every open (resp.\ compact saturated) subset of $E$ under $\pi_0$ is open (resp.\ compact saturated) in $E_0$. 
The inverse image of every open (resp.\ compact saturated) subset of $E_0$ under $\pi_0$ is open (resp.\ compact saturated) in $E$. 
\end{lemma}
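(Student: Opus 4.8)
The statement splits into four assertions: the image (resp.\ inverse image) of an open set, and of a compact saturated set, is of the same kind. The plan is to reduce everything to two facts that are immediate from the excerpt. First, $\pi_0$ is continuous, which settles at once that inverse images of open sets are open. Second, since every open subset of $E$ is saturated (in the specialization order), Lemma~\ref{lem:lemA} yields $\pi_0^{-1}(\pi_0(G)) = G$ for every open $G \subset E$. This second fact immediately handles the image of an open set: $\pi_0^{-1}(\pi_0(G)) = G$ is open, so $\pi_0(G)$ is open in $E_0$ by the very definition of the quotient topology; in other words $\pi_0$ is an open map. More importantly, it shows that $G \mapsto \pi_0(G)$ and $U' \mapsto \pi_0^{-1}(U')$ are mutually inverse bijections between the open subsets of $E$ and those of $E_0$, which is the engine for the two remaining cases.

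For the inverse image of a compact saturated set $Q' \subset E_0$, saturation is easy: writing $Q'$ as an intersection of open subsets of $E_0$ and applying $\pi_0^{-1}$ exhibits $\pi_0^{-1}(Q')$ as an intersection of open subsets of $E$, hence as a saturated set. For compactness I would take a cover of $\pi_0^{-1}(Q')$ by sets $G_i \in \mathrsfs{G}$, note that each $G_i = \pi_0^{-1}(\pi_0(G_i))$ with $\pi_0(G_i)$ open in $E_0$, and rewrite the cover as $\pi_0^{-1}\bigl(\bigcup_i \pi_0(G_i)\bigr)$. Pushing forward through $\pi_0$ and using $\pi_0(\pi_0^{-1}(Q')) = Q'$ from Lemma~\ref{lem:lemA}, I obtain an open cover $Q' \subset \bigcup_i \pi_0(G_i)$ of the compact set $Q'$; a finite subcover pulls back, via $\pi_0^{-1}(\pi_0(G_i)) = G_i$, to a finite subcover of $\pi_0^{-1}(Q')$.

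The image of a compact saturated set $Q \in \mathrsfs{Q}$ is compact because $\pi_0$ is continuous, so the one delicate point, and the step I expect to be the main obstacle, is the saturation of $\pi_0(Q)$. Here I would compute the saturation of $\pi_0(Q)$ in $E_0$ as the intersection of its open neighbourhoods and take the $\pi_0$-preimage. Using the bijection noted above, together with $\pi_0^{-1}(\pi_0(Q)) = Q$ (Lemma~\ref{lem:lemA}, $Q$ being saturated), the open neighbourhoods $U'$ of $\pi_0(Q)$ in $E_0$ correspond exactly, via $U = \pi_0^{-1}(U')$, to the open neighbourhoods of $Q$ in $E$. Consequently the preimage of the saturation of $\pi_0(Q)$ equals $\bigcap_{G \in \mathrsfs{G},\, G \supset Q} G = Q$, the last equality because $Q$ is saturated; and this also equals $\pi_0^{-1}(\pi_0(Q))$. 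Since $\pi_0$ is surjective, $\pi_0^{-1}$ is injective on subsets (again by Lemma~\ref{lem:lemA}), so the saturation of $\pi_0(Q)$ coincides with $\pi_0(Q)$, proving it saturated. The whole difficulty is this transfer of saturation across the quotient, and it is exactly why establishing the neighbourhood correspondence first is essential.
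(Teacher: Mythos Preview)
Your proof is correct and follows essentially the same approach as the paper: both use Lemma~\ref{lem:lemA} together with the definition of the quotient topology to show $\pi_0$ is open, and both prove compactness of $\pi_0^{-1}(Q')$ by pushing an open cover of $\pi_0^{-1}(Q')$ forward to a cover of $Q'$ by the images $\pi_0(G_i)$ and then pulling back a finite subcover.

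The one noteworthy difference is that the paper is much terser: it leaves the saturation of $\pi_0^{-1}(Q')$ to the reader and dismisses the remaining two assertions (inverse image of opens, image of compact saturated sets) with the single line ``directly follow from the continuity of $\pi_0$''. Continuity indeed handles the inverse image of opens and the compactness of $\pi_0(Q)$, but saturation of $\pi_0(Q)$ does not literally follow from continuity alone; your argument via the bijection $G \leftrightarrow \pi_0(G)$ between open neighbourhoods of $Q$ and of $\pi_0(Q)$, yielding $\pi_0^{-1}(\uparrow\!\pi_0(Q)) = Q = \pi_0^{-1}(\pi_0(Q))$ and then concluding by injectivity of $\pi_0^{-1}$, is exactly the missing justification. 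So your write-up is more complete on the one point the paper glosses over.
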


\begin{proof}
Let $G$ be an open subset of $E$. Since $G$ is saturated, $\pi_0^{-1}(\pi_0(G)) = G$ by Lemma~\ref{lem:lemA}. The definition of the quotient topology gives that $\pi_0(G)$ is open in $E_0$. 

Let $Q'$ be a compact saturated subset of $E_0$, and let us show that $Q := \pi_0^{-1}(Q')$ is compact saturated. So let $\mathrsfs{O}$ be a subset of $\mathrsfs{G}$ such that $Q \subset \bigcup \mathrsfs{O}$. Then $Q' = \pi_0(Q) \subset \pi_0(\bigcup \mathrsfs{O}) = \bigcup \pi_0(\mathrsfs{O})$. Since $\pi_0(O)$ is open for every $O \in \mathrsfs{O}$ and $Q'$ is compact, we get $Q' \subset \pi_0(O_1) \cup \cdots \cup \pi_0(O_k)$ for some $O_1, \ldots, O_k \in \mathrsfs{O}$, so that $Q \subset \pi_0^{-1}(\pi_0(O_1) \cup \cdots \cup \pi_0(O_k)) = O_1 \cup \cdots \cup O_k$. This shows that $Q$ is compact; the proof that $Q$ is saturated is not difficult and left to the reader. 

The remaining assertions directly follow from the continuity of $\pi_0$. 
\end{proof}

\begin{lemma}\label{lem:lemB}
%Let $E$ be a topological space. 
For all Borel subsets $B$ of $E$, the set $\pi_0(B)$ is a Borel subset of $E_0$ and satisfies $\pi_0^{-1}(\pi_0(B)) = B$. 
For all Borel subsets $B'$ of $E_0$, the set $\pi_0^{-1}(B')$ is a Borel subset of $E$ and satisfies $\pi_0(\pi_0^{-1}(B')) = B'$. 
\end{lemma}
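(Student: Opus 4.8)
Write $\mathrsfs{B}(E)$ and $\mathrsfs{B}(E_0)$ for the Borel $\sigma$-algebras of $E$ and $E_0$. The plan is to handle each assertion by a generated-$\sigma$-algebra (``good sets'') argument, recalling that $\mathrsfs{B}(E)$ and $\mathrsfs{B}(E_0)$ are by definition generated by the open subsets together with the compact saturated subsets. The one real subtlety is that the forward image $B \mapsto \pi_0(B)$ does not commute with complementation for arbitrary $B$, which would wreck a naive good-sets argument for the first assertion. I would neutralise this by working inside the class $\mathrsfs{S} := \{ A \subseteq E : \pi_0^{-1}(\pi_0(A)) = A \}$ of $\pi_0$-saturated subsets, equivalently the unions of $\sim$-classes. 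This $\mathrsfs{S}$ is a $\sigma$-algebra (indeed a complete field of sets, closed under arbitrary unions, intersections and complements), and by Lemma~\ref{lem:lemA} it contains every open and every compact saturated subset of $E$, since these are saturated. Hence $\mathrsfs{B}(E) \subseteq \mathrsfs{S}$; in particular $\pi_0^{-1}(\pi_0(B)) = B$ already holds for every Borel $B \subseteq E$, which is the equality in the first assertion.

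Next I would prove the second assertion, the routine inverse-image direction. Because $\pi_0^{-1}$ commutes with complements and countable unions, the family $\{ B' \subseteq E_0 : \pi_0^{-1}(B') \in \mathrsfs{B}(E) \}$ is a $\sigma$-algebra; it contains every open subset of $E_0$ by continuity of $\pi_0$, and every compact saturated subset of $E_0$ by Lemma~\ref{lem:opencompact}. As these generate $\mathrsfs{B}(E_0)$, the family contains all of $\mathrsfs{B}(E_0)$, so $\pi_0^{-1}(B') \in \mathrsfs{B}(E)$ for every Borel $B' \subseteq E_0$; and $\pi_0(\pi_0^{-1}(B')) = B'$ holds for any $B'$ by surjectivity of $\pi_0$, i.e.\ the second half of Lemma~\ref{lem:lemA}.

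It remains, for the first assertion, to see that $\pi_0(B) \in \mathrsfs{B}(E_0)$ for every Borel $B$. I would show that $\mathrsfs{M} := \{ B \in \mathrsfs{B}(E) : \pi_0(B) \in \mathrsfs{B}(E_0) \}$ is a $\sigma$-algebra. Closure under countable unions is immediate since $\pi_0$ commutes with unions. For closure under complements I would use that every $B \in \mathrsfs{M} \subseteq \mathrsfs{B}(E) \subseteq \mathrsfs{S}$ satisfies $\pi_0(E \setminus B) = E_0 \setminus \pi_0(B)$: setting $B' = \pi_0(B)$, so that $\pi_0^{-1}(B') = B$, the trivial identity $\pi_0^{-1}(E_0 \setminus B') = E \setminus B$ together with surjectivity (Lemma~\ref{lem:lemA}) yields $\pi_0(E \setminus B) = E_0 \setminus B'$, which is Borel whenever $B'$ is. Since $\mathrsfs{M}$ contains the generators --- images of open (resp.\ compact saturated) subsets are open (resp.\ compact saturated) in $E_0$ by Lemma~\ref{lem:opencompact}, hence Borel --- we conclude $\mathrsfs{M} = \mathrsfs{B}(E)$. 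The crux of the whole proof is precisely this complement step, where the forward image is tamed only because $\mathrsfs{B}(E) \subseteq \mathrsfs{S}$; once that inclusion is secured the remainder is purely formal.
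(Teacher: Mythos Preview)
Your proof is correct and follows essentially the same good-sets strategy as the paper. The paper bundles both conditions into a single collection $\mathrsfs{A} = \{ B \in \mathrsfs{B} : \pi_0(B) \in \mathrsfs{B}(E_0) \text{ and } \pi_0^{-1}(\pi_0(B)) = B \}$ and asserts that it is ``easily seen'' to be a $\sigma$-algebra, whereas you unbundle the two conditions, first proving $\mathrsfs{B}(E) \subseteq \mathrsfs{S}$ and then using this to establish the complement step for $\mathrsfs{M}$; your explicit treatment of why $\pi_0(E\setminus B)=E_0\setminus\pi_0(B)$ for $B\in\mathrsfs{S}$ is exactly what is needed to justify the paper's ``easily seen'', so the arguments coincide in substance.
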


\begin{proof}%[Sketch of the proof]
For the first assertion, let $\mathrsfs{A}$ be the collection of all $B \in \mathrsfs{B}$ such that $\pi_0(B)$ is a Borel subset of $E_0$ and $\pi_0^{-1}(\pi_0(B)) = B$. 
It is easily seen that $\mathrsfs{A}$ is a $\sigma$-algebra. Moreover, the combination of Lemma~\ref{lem:lemA} and Lemma~\ref{lem:opencompact} implies that 
%With the help of Lemma~\ref{lem:lemA} and the definition of the quotient topology, one can show that 
$\mathrsfs{A}$ contains both $\mathrsfs{G}$ and $\mathrsfs{Q}$. As a consequence, $\mathrsfs{A} = \mathrsfs{B}$. %, which will imply $\mathrsfs{A} = \mathrsfs{B}$. 

With the help of Lemma~\ref{lem:lemA}, the second assertion of the lemma can be proved similarly. 
\end{proof}

In the following result, the concept of \textit{reflection} refers to category theory. 

\begin{theorem}
Every topological space $E$ has a $T_0$-reflection $(E_0, \pi_0)$ given by $\pi_0 : E \rightarrow E_0, x \mapsto [x]$, 
where $E_0 = E/\!\! \sim$ is the quotient set equipped with the quotient topology, which is a $T_0$ topology, and $\sim$ is the equivalence relation $x \sim y \Leftrightarrow \overline{x} = \overline{y}$. 
Moreover, the correspondence $B' \mapsto \pi_0^{-1}(B')$ is an isomorphism of Borel $\sigma$-algebras. 
\end{theorem}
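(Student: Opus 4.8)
The plan is to verify the three assertions of the theorem by drawing on the lemmas already established. The statement has three parts: (i) $(E_0,\pi_0)$ is well-defined with $E_0$ a $T_0$ space and $\pi_0$ continuous; (ii) $(E_0,\pi_0)$ is a $T_0$-reflection, i.e. it satisfies the universal property that every continuous map from $E$ to a $T_0$ space factors uniquely through $\pi_0$; and (iii) the map $B'\mapsto \pi_0^{-1}(B')$ is an isomorphism of Borel $\sigma$-algebras. Parts (i) and (iii) are essentially already in hand from the discussion preceding Lemma~\ref{lem:lemA} and from Lemma~\ref{lem:lemB}, so the real work is the universal property in (ii).

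For part (i), I would simply recall what was established just before Lemma~\ref{lem:lemA}: the relation $x\sim y \Leftrightarrow \overline{x}=\overline{y} \Leftrightarrow \uparrow\!\!x = \uparrow\!\!y$ is an equivalence relation, the quotient $E_0$ with the quotient topology is $T_0$, and $\pi_0$ is continuous. These facts require no new argument.

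For part (ii), the plan is to establish the universal property of the reflection. Let $f:E\to Y$ be a continuous map into a $T_0$ space $Y$. First I would show that $f$ is constant on equivalence classes, i.e. $x\sim y$ implies $f(x)=f(y)$: if $x\sim y$ then $x$ and $y$ lie in exactly the same open subsets of $E$, so by continuity $f(x)$ and $f(y)$ lie in exactly the same open subsets of $Y$; since $Y$ is $T_0$, points distinguished by no open set must coincide, giving $f(x)=f(y)$. This produces a unique set-theoretic factorization $\bar{f}:E_0\to Y$ with $\bar{f}\circ\pi_0 = f$, and continuity of $\bar{f}$ follows from the universal property of the quotient topology together with the continuity of $f=\bar{f}\circ\pi_0$. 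Uniqueness of $\bar{f}$ is immediate from the surjectivity of $\pi_0$. This verifies that $(E_0,\pi_0)$ is the $T_0$-reflection in the categorical sense.

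For part (iii), I would invoke Lemma~\ref{lem:lemB} directly. The map $B'\mapsto \pi_0^{-1}(B')$ sends Borel subsets of $E_0$ to Borel subsets of $E$ by the second assertion of the lemma, and the identities $\pi_0(\pi_0^{-1}(B'))=B'$ and $\pi_0^{-1}(\pi_0(B))=B$ show it is a bijection between $\mathrsfs{B}(E_0)$ and $\mathrsfs{B}(E)$ with inverse $B\mapsto\pi_0(B)$. Since preimage commutes with complements and arbitrary unions, it is a $\sigma$-algebra homomorphism, hence an isomorphism. The main obstacle, such as it is, lies in part (ii): one must be careful to argue that $\sim$ is exactly the relation identifying topologically indistinguishable points so that constancy of $f$ on classes genuinely follows from $T_0$-ness of $Y$; everything else is a direct assembly of the preceding lemmas.
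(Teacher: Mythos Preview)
Your proposal is correct and follows essentially the same approach as the paper: both verify the universal property by showing that $x\sim y$ forces $f(x)=f(y)$ via topological indistinguishability and $T_0$-ness of the target, and both obtain the Borel isomorphism directly from Lemma~\ref{lem:lemB}. Your write-up is in fact slightly more explicit than the paper's, which omits the verification that the factorization $\bar{f}$ is continuous.
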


\begin{proof}
We have to show first that, for all $T_0$ spaces $X$ and all continuous maps $f : E \rightarrow X$, there exists a unique continuous map $f_0 : E_0 \rightarrow X$ such that the following diagram commutes:  
\begin{diagram}
E & \rTo_{\pi_0}  & E_0  \\
  & \rdTo_{f} &  \dDashto<{f_0}  \\
  &               &  X  \\
\end{diagram}
But $f_0$ can be explicitly defined by $f_0([x]) = f(x)$, for if $[x] = [y]$ then $f(x) = f(y)$. Indeed, let $G'$ be an open subset containing $f(y)$. Then $y \in f^{-1}(G')$. Since $f$ is continuous, $f^{-1}(G')$ is open in $E$. Considering that $x \geqslant y$, this implies that $x$ is also in  $f^{-1}(G')$, thus $f(x) \in G'$. If one inverts the roles of $x$ and $y$, we deduce that $f(x) \sim f(y)$. But $X$ is $T_0$, so that $f(x) = f(y)$. The uniqueness of $f_0$ then directly follows.  

To conclude the proof, firstly recall that, by Lemma~\ref{lem:lemB}, $\pi_0^{-1}(B') \in \mathrsfs{B}$ for all Borel subsets $B'$ of $E_0$. % means that $\pi_0$ is a measurable map. \textcolor{red}{This is indeed the case since $\pi_0$ is even a continuous map.} 
Secondly, the map $\Psi : B' \mapsto \pi_0^{-1}(B')$ is both surjective and injective thanks to Lemma~\ref{lem:lemB}, and we easily deduce that it is an isomorphism of Borel $\sigma$-algebras. %, it suffices to note that $\Psi$ is . %, and injective thanks to Lemma~\ref{lem:lemA}. 
\end{proof}

In this paper, the maxitive measures considered will only be defined on the Borel $\sigma$-algebra of the topological space $E$ at stake. By the previous theorem, we thus may assume that $E$ be $T_0$ without loss of generality.
However, we believe it interesting, from a formal point of view, to explicitly work in a non-$T_0$ setting. So we make the choice to keep on with general (not necessarily $T_0$) topological spaces; for that reason the following result will be useful. 
 
\begin{corollary}\label{lem:tilde}
%Let $E$ be a topological space. 
For all Borel subsets $B$ of $E$, $x \in B$ implies $[x] \subset B$. 
\end{corollary}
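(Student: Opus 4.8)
The plan is to reduce the statement to the identity $\pi_0^{-1}(\pi_0(B)) = B$ already established for Borel sets in Lemma~\ref{lem:lemB}. The key observation is that the subset $[x] = \uparrow\!\! x \cap \overline{x}$ of $E$, being by construction the equivalence class of $x$ for the relation $\sim$, is exactly the fiber of the quotient map over the point $\pi_0(x) \in E_0$; that is,
$$
[x] = \{\, y \in E : y \sim x \,\} = \{\, y \in E : \pi_0(y) = \pi_0(x) \,\} = \pi_0^{-1}\bigl(\{\pi_0(x)\}\bigr).
$$
(One must keep in mind the deliberate overloading of the symbol $[x]$, which denotes both the subset $\uparrow\!\! x \cap \overline{x}$ of $E$ and the corresponding point of $E_0$; the displayed equality is precisely the reconciliation of these two uses.)

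Granting this, the proof is immediate. First I would assume $x \in B$, so that $\pi_0(x) \in \pi_0(B)$ and hence $\{\pi_0(x)\} \subset \pi_0(B)$. Taking inverse images under $\pi_0$ preserves inclusions, so
$$
[x] = \pi_0^{-1}\bigl(\{\pi_0(x)\}\bigr) \subset \pi_0^{-1}\bigl(\pi_0(B)\bigr) = B,
$$
where the final equality is the first assertion of Lemma~\ref{lem:lemB} applied to the Borel set $B$. This yields $[x] \subset B$, as desired.

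I do not anticipate any real obstacle here: the result is a formal corollary, and the only point requiring a moment's care is the identification of the set $[x]$ with the fiber $\pi_0^{-1}(\{\pi_0(x)\})$, which follows directly from the definition of $\sim$ and the remark (made just before Lemma~\ref{lem:lemA}) that $x \sim y$ is equivalent to $\overline{x} = \overline{y}$ and to $\uparrow\!\! x = \uparrow\!\! y$. Everything else is the monotonicity of preimages together with the Borel-set identity from Lemma~\ref{lem:lemB}.
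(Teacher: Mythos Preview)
Your proof is correct and is essentially the same as the paper's: both rely on Lemma~\ref{lem:lemB} via the observation that $y \in [x]$ means $\pi_0(y) = \pi_0(x)$, whence $y \in \pi_0^{-1}(\pi_0(B)) = B$. The only cosmetic difference is that the paper argues pointwise (pick $y \in [x]$ and show $y \in B$) while you phrase it set-theoretically via $[x] = \pi_0^{-1}(\{\pi_0(x)\}) \subset \pi_0^{-1}(\pi_0(B))$.
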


\begin{proof}
Let $x \in B$, and let $y \in [x]$. We want to show that $y \in B$. But $y \in [x]$ implies $\pi_0(y) = \pi_0(x) \in \pi_0(B)$. Thus, $y \in \pi_0^{-1}(\pi_0(B)) = B$ by Lemma~\ref{lem:lemB}. 
\end{proof}

%\begin{lemma}\label{lem:tilde}
%Let $E$ be a topological space. 
%For all Borel subsets $B$ of $E$, $x \in B$ implies $[x] \subset B$. 
%\end{lemma}

%\begin{proof}
%Let $[\mathrsfs{B}]$ be the collection of all Borel subsets $B$ such that $[x] \subset B$ for all $x \in B$. 
%We prove that, if $B \in [\mathrsfs{B}]$, then $E \setminus B \in [\mathrsfs{B}]$. 
%So suppose that $x \in E \setminus B$, and let us show that $[x] \subset E \setminus B$, i.e.\ $[x] \cap B = \emptyset$. 
%If $y \in [x] \cap B$, then $[x] = [y]$ on the one hand, and $[y] \subset B$ since $B \in [\mathrsfs{B}]$ on the other hand. Then $x \in [x] = [y] \subset B$, a contradiction. This proves that $[x] \subset E \setminus B$. 
%
%Now it is not difficult to deduce that $[\mathrsfs{B}]$ is a $\sigma$-algebra containing $\mathrsfs{G}$ and $\mathrsfs{Q}$, hence $[\mathrsfs{B}]$ coincides with $\mathrsfs{B}$. 
%\end{proof}

%\begin{remark}
%\end{remark}

\subsection{Regular maxitive measures} 

Let $E$ be a topological space with Borel $\sigma$-algebra $\mathrsfs{B}$, and let $L$ be a filtered-complete poset with a bottom element, that we denote by $0$. 
An $L$-valued \textit{maxitive measure} (resp.\ \textit{$\sigma$-maxitive measure}, \textit{completely maxitive measure}) on $\mathrsfs{B}$ is a map $\nu : \mathrsfs{B} \rightarrow L$ such that $\nu(\emptyset) = 0$ and, for every finite (resp.\ countable, arbitrary) family $\{B_j\}_{j\in J}$ of elements of $\mathrsfs{B}$ such that $\bigcup_{j \in J} B_j \in \mathrsfs{B}$, the supremum of $\{ \nu(B_j) : j\in J \}$ exists and satisfies 
$$
\nu(\bigcup_{j\in J} B_j) = \bigoplus_{j \in J} \nu(B_j). 
$$
Note that this definition implies that the image $\nu(\mathrsfs{B})$ is a sup-subsemilattice of $L$ containing $0$ (even though $L$ itself need not be a sup-semilattice), and the corestriction of $\nu$ to $\nu(\mathrsfs{B})$ is a sup-semilattice morphism. 

An $L$-valued maxitive measure $\nu$ on $\mathrsfs{B}$ is \textit{regular} if it satisfies both following relations for all $B \in \mathrsfs{B}$: 
\begin{itemize}
	\item \textit{inner-continuity}: 
	$$
	\nu(B) = \bigoplus_{K \in \mathrsfs{K}, K \subset B} \nu(\uparrow\!\! K), 
	$$ 
	\item \textit{outer-continuity}: 
	$$
	\nu(B) = \bigwedge_{G \in \mathrsfs{G}, G \supset B} \nu(G).  
	$$ 
\end{itemize}

\begin{example}\label{ex:nuplus}
Assume that $L$ is a domain. 
For an $L$-valued ($\sigma$-)maxitive measure $\nu$ on $\mathrsfs{G}$, the set $\{  \nu(G) : G \in \mathrsfs{G}, G \supset B \}$ is filtered for all $B \in \mathrsfs{B}$, so one can define a map $\nu^{+}$ on $\mathrsfs{B}$ by 
$$
\nu^{+}(B) = \bigwedge_{G \in \mathrsfs{G}, G \supset B} \nu(G). 
$$
Then, by \cite[Corollary~2.4]{Poncet10}, $\nu^{+}$ is an outer-continuous ($\sigma$-)maxitive measure (see also Akian \cite[Proposition~3.1]{Akian99}). Moreover, $\nu^{+}$ is inner-continuous (hence regular) if $\nu$ is inner-continuous (combine Lemma~\ref{lem:locconv} and Proposition~\ref{lem:loccomp} below). 
\end{example}

We shall also use weakened notions of inner- and outer-continuity for an $L$-valued maxitive measure $\nu$ on $\mathrsfs{B}$: 
\begin{itemize}
	\item \textit{weak inner-continuity}: 
	$$
	\nu(G) = \bigoplus_{K \in \mathrsfs{K}, K \subset G} \nu^{+}(K), \quad\quad \mbox{ for all $G \in \mathrsfs{G}$ }, 
	$$ 
	\item \textit{weak outer-continuity}: 
	$$
	\nu(K) = \bigwedge_{G \in \mathrsfs{G}, G \supset K} \nu(G), \quad\quad \mbox{ for all $K \in \mathrsfs{K}$ }.    
	$$ 
\end{itemize}

The following result ensures that the terminology we use is consistent. 

\begin{lemma}\label{lem:locconv}
An inner- (resp.\ outer-)continuous maxitive measure on $\mathrsfs{B}$ is weakly inner- (resp.\ weakly outer-)continuous. 
\end{lemma}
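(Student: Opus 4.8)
The plan is to treat the two assertions separately, the outer case being essentially immediate and the inner case requiring only a short comparison between $\nu(\uparrow\!\! K)$ and $\nu^{+}(K)$. For the outer case, I would simply observe that every $K \in \mathrsfs{K}$ is by definition a (compact) Borel subset, so $K \in \mathrsfs{B}$; applying outer-continuity to $B = K$ then gives $\nu(K) = \bigwedge_{G \in \mathrsfs{G}, G \supset K} \nu(G)$, which is exactly weak outer-continuity. No further argument is needed there.

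For the inner case I would fix $G \in \mathrsfs{G}$ and aim to establish $\nu(G) = \bigoplus_{K \in \mathrsfs{K}, K \subset G} \nu^{+}(K)$ by proving both inequalities. First I would record two preliminary facts: that $\nu$ is monotone on $\mathrsfs{B}$ (immediate from maxitivity, since $A \subset B$ forces $\nu(B) = \nu(A) \oplus \nu(B)$), and that $\uparrow\!\! K$ is compact saturated, hence $\uparrow\!\! K \in \mathrsfs{Q} \subset \mathrsfs{B}$, so that all quantities are well-defined. The inequality $\bigoplus_{K \subset G} \nu^{+}(K) \leqslant \nu(G)$ is the easy one: for each $K \subset G$, the open set $G$ itself belongs to the filtered family $\{ G' \in \mathrsfs{G} : G' \supset K \}$ defining $\nu^{+}(K)$, whence $\nu^{+}(K) \leqslant \nu(G)$, and taking the supremum over $K$ gives the claim.

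For the reverse inequality I would first show that $\nu^{+}(K) \geqslant \nu(\uparrow\!\! K)$. Indeed, any open $G' \supset K$ is saturated, so $G' \supset \uparrow\!\! K$ (as already noted, $\uparrow\!\! K = \bigcap_{G' \in \mathrsfs{G}, G' \supset K} G'$), and monotonicity yields $\nu(G') \geqslant \nu(\uparrow\!\! K)$; taking the infimum over such $G'$ gives $\nu^{+}(K) \geqslant \nu(\uparrow\!\! K)$. Applying inner-continuity to $B = G$ then gives $\nu(G) = \bigoplus_{K \subset G} \nu(\uparrow\!\! K) \leqslant \bigoplus_{K \subset G} \nu^{+}(K)$, and combining the two inequalities completes the inner case.

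The only delicate point, and it is a minor one, sits in the inner case: one must get the direction of the comparison right, namely that $\nu^{+}(K)$ \emph{dominates} $\nu(\uparrow\!\! K)$ rather than the reverse. This hinges entirely on open sets being saturated, so that every open superset of $K$ is automatically a superset of $\uparrow\!\! K$; once this is in hand, everything else follows directly from monotonicity and the definitions.
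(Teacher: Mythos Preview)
Your proof is correct and supplies exactly the details the paper omits: the paper's own proof reads in full ``The easy proof is left to the reader.'' Your argument---specializing outer-continuity from $\mathrsfs{B}$ to $\mathrsfs{K}$ for the outer case, and using $\nu^{+}(K) \geqslant \nu(\uparrow\!\! K)$ (via saturation of open sets) to pass from inner-continuity to weak inner-continuity---is the natural route and is presumably what the author had in mind.
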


\begin{proof}
The easy proof is left to the reader. 
\end{proof}

The notion of weak inner-continuity can be characterized as follows. 

\begin{lemma}\label{lem:wic}
Assume that $L$ is a domain. Let $\nu$ be an $L$-valued maxitive measure on $\mathrsfs{B}$. 
Then $\nu$ is weakly inner-continuous if and only if 
\begin{equation}\label{eq:eqo}
\nu(\bigcup \mathrsfs{O}) = \bigoplus \nu(\mathrsfs{O}), 
\end{equation}
for all families $\mathrsfs{O}$ of open subsets of $E$. 
\end{lemma}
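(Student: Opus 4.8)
The plan is to establish the equivalence between weak inner-continuity and the open-set supremum formula \eqref{eq:eqo} by proving each implication separately, relying heavily on the domain structure of $L$ and the Hofmann--Mislove theorem (Theorem~\ref{thm:dual}).

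\textbf{From \eqref{eq:eqo} to weak inner-continuity.} Assume \eqref{eq:eqo} holds. Fix an open set $G$. The inequality $\nu(G) \geqslant \bigoplus_{K \in \mathrsfs{K}, K \subset G} \nu^{+}(K)$ is the easy direction: each compact $K \subset G$ satisfies $\nu^{+}(K) = \bigwedge_{G' \supset K} \nu(G') \leqslant \nu(G)$ since $G$ itself is an open set containing $K$, and $\nu(G)$ is an upper bound. For the reverse inequality I would use the Remark on continuous posets: it suffices to show that whenever $s \gg \nu(G)$, we have $s \geqslant \bigoplus_{K \subset G} \nu^{+}(K)$. Here the domain hypothesis is essential. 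I expect to write $G$ as a directed union of open sets whose closures (or saturations of compact subsets) are contained in $G$, then push the way-above element $s$ through the filtered infimum defining $\nu^{+}$.

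\textbf{From weak inner-continuity to \eqref{eq:eqo}.} Conversely, assume $\nu(G) = \bigoplus_{K \in \mathrsfs{K}, K \subset G} \nu^{+}(K)$ for every open $G$. Let $\mathrsfs{O}$ be an arbitrary family of open sets and set $G = \bigcup \mathrsfs{O}$, which is open. The inequality $\nu(G) \geqslant \bigoplus \nu(\mathrsfs{O})$ is immediate from monotonicity, since each $O \in \mathrsfs{O}$ satisfies $O \subset G$. For the reverse, I would again invoke the Remark and take $s \gg \nu(G)$; by weak inner-continuity and the fact that $\nu(G)$ is a supremum, it suffices to handle each term $\nu^{+}(K)$ for compact $K \subset G = \bigcup \mathrsfs{O}$. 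Compactness of $K$ yields a finite subcover $O_1, \dots, O_k \in \mathrsfs{O}$ with $K \subset O_1 \cup \cdots \cup O_k$, so that $\nu^{+}(K) \leqslant \nu(O_1 \cup \cdots \cup O_k) = \nu(O_1) \oplus \cdots \oplus \nu(O_k) \leqslant \bigoplus \nu(\mathrsfs{O})$, using finite maxitivity. Summing over $K$ gives $\nu(G) \leqslant \bigoplus \nu(\mathrsfs{O})$.

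\textbf{Main obstacle.} The delicate point is the way-above manipulation in the first implication: relating a way-above neighbor $s$ of the filtered infimum $\nu^{+}(K) = \bigwedge_{G' \supset K} \nu(G')$ back to an actual open set $G'$ on which $\nu$ is controlled. The definition of $\gg$ (if $s \gg r$ and $r \geqslant \bigwedge F$ for a filter $F$, then $s \in F$) applied to the filter $\{\nu(G') : G' \supset K\}$ should produce a single open $G'$ with $\nu(G') \leqslant s$, and here the interpolation property and the Hofmann--Mislove theorem (to extract finitely many open sets covering a compact saturated set) are the crucial tools. I expect the rest of the argument to be routine monotonicity and the finite maxitivity of $\nu$.
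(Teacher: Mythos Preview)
Your second implication (weak inner-continuity $\Rightarrow$ \eqref{eq:eqo}) is essentially the paper's argument: for each compact Borel $K \subset \bigcup \mathrsfs{O}$, a finite subcover gives $\nu^{+}(K) \leqslant \nu(O_1) \oplus \cdots \oplus \nu(O_n)$, so every upper bound of $\nu(\mathrsfs{O})$ is an upper bound of $\{\nu^{+}(K) : K \subset G\}$ and hence dominates $\nu(G)$. The detour through ``$s \gg \nu(G)$'' is unnecessary and in fact points the wrong way, but the substance is fine.

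The first implication (\eqref{eq:eqo} $\Rightarrow$ weak inner-continuity) has a genuine gap. First, the way-above framing is backwards: to show that $\nu(G)$ is the \emph{least} upper bound of $\{\nu^{+}(K) : K \in \mathrsfs{K}, K \subset G\}$, you must take an arbitrary upper bound $u$ and some $s \gg u$, then prove $s \geqslant \nu(G)$. Taking $s \gg \nu(G)$ proves nothing new. Second, and more seriously, your plan to ``write $G$ as a directed union of open sets whose closures (or saturations of compact subsets) are contained in $G$'' tacitly assumes a regularity or local-compactness property that the lemma does not impose on $E$. The paper avoids this entirely: for each $x \in G$ the set $[x] = \uparrow\!\! x \cap \overline{x}$ is a compact Borel subset contained in $G$, so $s \gg u \geqslant \nu^{+}([x])$; the definition of $\gg$ applied to the filtered family $\{\nu(G') : G' \ni x\}$ then yields an open $G_x \ni x$ with $s \geqslant \nu(G_x)$, and \eqref{eq:eqo} gives $s \geqslant \nu(\bigcup_{x \in G} G_x) \geqslant \nu(G)$. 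This pointwise use of $[x]$ is the missing idea; neither the Hofmann--Mislove theorem nor any sobriety hypothesis is needed here.
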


\begin{proof}
First we suppose that $\nu$ is weakly inner-continuous. Let $\mathrsfs{O}$ be a family of open subsets of $E$, and let $G = \bigcup \mathrsfs{O}$. 
The identity we need to show will be satisfied if we prove that $\nu^{+}(K) \leqslant \bigoplus \nu(\mathrsfs{O})$ for all compact Borel subsets $K \subset G$. 
But for such a $K$, there are open subsets $O_1, \ldots, O_n$ in $\mathrsfs{O}$ such that $K \subset O_1 \cup \ldots \cup O_n$, so that $\nu^{+}(K) \leqslant \nu(O_1) \oplus \ldots \oplus \nu(O_n) \leqslant \bigoplus \nu(\mathrsfs{O})$. 

Conversely, suppose that Equation~(\ref{eq:eqo}) holds for all families $\mathrsfs{O}$ of open subsets of $E$. 
To prove that $\nu$ is weakly inner-continuous, fix some $G \in \mathrsfs{G}$, let $u$ be an upper-bound of $\{ \nu^{+}(K) : K \in \mathrsfs{K}, K \subset G \}$, and let $s \gg u$. 
Then for all $x \in G$, $s \gg \nu^{+}([x])$, so there is some $G_x \in \mathrsfs{G}$ such that $G_x \ni x$ and $s \geqslant \nu(G_x)$. By Equation~(\ref{eq:eqo}) we have $s \geqslant \nu(\bigcup_{x \in G} G_x) \geqslant \nu(G)$. Since $L$ is continuous, we get $u \geqslant \nu(G)$, and the result follows. 
\end{proof}

The following lemma characterizes weak outer-continuity. 

\begin{lemma}\label{lem:reg0}
Assume that $L$ is a domain. 
Let $\nu$ be an $L$-valued maxitive measure on $\mathrsfs{B}$. Then 
$$
\nu^{+}(K) = \bigoplus_{x \in K} \nu^{+}([x]), 
$$
for all $K \in \mathrsfs{K}$. 
As a consequence, $\nu$ is weakly outer-continuous if and only if $\nu([x]) = \nu^{+}([x])$ for all $x \in E$.  
\end{lemma}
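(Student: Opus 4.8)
The plan is to prove the displayed identity by establishing the two inequalities separately, and then to read off the characterization of weak outer-continuity as a short corollary.

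For the easy inequality $\bigoplus_{x \in K} \nu^{+}([x]) \leqslant \nu^{+}(K)$, I would first record that $\nu^{+}$ is monotone, since enlarging a set shrinks the family of open supersets over which the defining infimum is taken. By Corollary~\ref{lem:tilde} we have $[x] \subset K$ for every $x \in K$, hence $\nu^{+}([x]) \leqslant \nu^{+}(K)$ for each such $x$. Thus $\nu^{+}(K)$ is an upper bound of $\{ \nu^{+}([x]) : x \in K \}$; in particular the supremum $\bigoplus_{x \in K} \nu^{+}([x])$ will exist once we show it equals $\nu^{+}(K)$.

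For the reverse inequality I would use the remark on the way-above relation from Section~\ref{sec:domain}: to prove $\nu^{+}(K) \leqslant u$ for an arbitrary upper bound $u$ of $\{ \nu^{+}([x]) : x \in K \}$, it suffices to show $s \geqslant \nu^{+}(K)$ whenever $s \gg u$. Fix such an $s$. Since $s \gg u \geqslant \nu^{+}([x])$ forces $s \gg \nu^{+}([x])$ for every $x \in K$, and since $\nu^{+}([x]) = \bigwedge \{ \nu(G) : G \in \mathrsfs{G}, G \supset [x] \}$ is the infimum of a filtered family, hence of the filter it generates (which has an infimum by filtered-completeness of $L$), the definition of $\gg$ produces for each $x \in K$ an open set $G_x \supset [x]$ with $\nu(G_x) \leqslant s$. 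As $x \in [x] \subset G_x$, the family $\{G_x\}_{x \in K}$ covers $K$, so compactness yields a finite subcover $G_{x_1}, \ldots, G_{x_n}$; by finite maxitivity $\nu(G_{x_1} \cup \cdots \cup G_{x_n}) = \bigoplus_{i} \nu(G_{x_i}) \leqslant s$, and since $G := \bigcup_i G_{x_i}$ is open and contains $K$ we obtain $\nu^{+}(K) \leqslant \nu(G) \leqslant s$. This gives $\nu^{+}(K) \leqslant u$; taking $u = \bigoplus_{x \in K} \nu^{+}([x])$ completes the identity.

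For the consequence I would first note that $\nu^{+}(K) \geqslant \nu(K)$ always holds, because every open $G \supset K$ satisfies $\nu(G) \geqslant \nu(K)$ by monotonicity of $\nu$. If $\nu([x]) = \nu^{+}([x])$ for all $x$, the identity just proved becomes $\nu^{+}(K) = \bigoplus_{x \in K} \nu([x])$; since $[x] \subset K$ gives $\nu([x]) \leqslant \nu(K)$, this supremum is $\leqslant \nu(K)$, and combined with $\nu^{+}(K) \geqslant \nu(K)$ we get $\nu(K) = \nu^{+}(K)$, i.e.\ weak outer-continuity. Conversely, applying weak outer-continuity to the compact Borel set $[x]$ immediately gives $\nu([x]) = \nu^{+}([x])$. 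The main obstacle will be the reverse inequality, specifically the passage from $s \gg \nu^{+}([x])$ to a genuine open neighbourhood $G_x$ with $\nu(G_x) \leqslant s$: this is where the way-above relation, filtered-completeness, and the representation of $\nu^{+}([x])$ as a filtered infimum must be combined correctly before compactness can be brought to bear.
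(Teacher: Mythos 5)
Your proposal is correct and follows essentially the same route as the paper: the reverse inequality via $s \gg u$, extraction of open neighbourhoods $G_x$ with $\nu(G_x) \leqslant s$, a finite subcover of $K$, and continuity of $L$, followed by the same sandwich argument for the equivalence. The only difference is that you spell out two steps the paper leaves implicit (monotonicity of $\nu^{+}$ giving the easy inequality, and passing from the filtered family $\{\nu(G) : G \supset [x]\}$ to the filter it generates before invoking the definition of $\gg$), which is harmless.
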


\begin{proof}
We let $c^{+} : x \mapsto \nu^{+}([x])$. 
Let $u \in L$ be an upper-bound of $\{ c^{+}(x) : x \in K \}$ and let $s \gg u$. Then, for each $x \in K$, $s \gg c^{+}(x)$, so there is some open subset $G_x \ni x$ such that $s \geqslant \nu(G_x)$. Since $K$ is compact and $\bigcup_{x \in K} G_x \supset K$, we can extract a finite subcover and write $\bigcup_{j = 1}^k G_{x_j} \supset K$. Thus, $s \geqslant \nu^{+}(K)$. Since $L$ is continuous, this implies that $u \geqslant \nu^{+}(K)$, so that $\nu^{+}(K)$ is the least upper-bound of $\{ c^{+}(x) : x \in K \}$. 

Now we prove the announced equivalence. First assume that $\nu([x]) = \nu^{+}([x])$ for all $x \in E$. Then, for every compact Borel subset $K$, 
$$
\nu^{+}(K) = \bigoplus_{x \in K} \nu^{+}([x]) = \bigoplus_{x \in K} \nu([x]) \leqslant \nu(K) \leqslant \nu^{+}(K), 
$$
so $\nu^{+}(K) = \nu(K)$, i.e.\ $\nu$ is weakly outer-continuous. 
Conversely, if $\nu$ is weakly outer-continuous then, for all $x \in E$, $[x]$ is a compact Borel subset, hence $\nu([x]) = \nu^{+}([x])$.  
\end{proof}

It happens that we recover regularity if we combine weak inner- and weak outer-continuity. 

\begin{proposition}\label{lem:loccomp}
Assume that $L$ is a domain. 
Then every $L$-valued maxitive measure on $\mathrsfs{B}$ that is both weakly outer-continuous and weakly inner-continuous is regular and completely maxitive. 
\end{proposition}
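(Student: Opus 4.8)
The plan is to construct a cardinal density for $\nu$ and to read off all three conclusions from it. For $x \in E$ put $c(x) = \nu([x])$. Because $\nu$ is weakly outer-continuous, Lemma~\ref{lem:reg0} identifies $c(x)$ with $\nu^{+}([x])$; moreover, since every open set is saturated, an open set contains $[x]$ exactly when it contains $x$, so $c(x) = \bigwedge_{G \in \mathrsfs{G},\, G \ni x} \nu(G)$. The crux is the density formula $\nu(B) = \bigoplus_{x \in B} c(x)$ for every $B \in \mathrsfs{B}$; I expect this to be the main obstacle, since $L$ is not assumed to be a sup-semilattice and complete maxitivity (which would guarantee the relevant suprema) is precisely what we are trying to prove. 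The way around this is that every supremum we need is realised as a value of $\nu$.

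To prove the formula I would first note that $x \in B$ forces $[x] \subset B$ by Corollary~\ref{lem:tilde}, so $c(x) = \nu([x]) \leqslant \nu(B)$ by monotonicity and $\nu(B)$ is an upper-bound of $\{ c(x) : x \in B \}$. For the least-upper-bound property I would use the Remark of Section~\ref{sec:domain}: let $u$ be any upper-bound and let $s \gg u$. For each $x \in B$ we have $s \gg u \geqslant \nu^{+}([x])$, hence $s \gg \nu^{+}([x])$; as $\{ \nu(G) : G \ni x \}$ is filtered with infimum $\nu^{+}([x])$, the definition of $\gg$ produces an open $G_x \ni x$ with $s \geqslant \nu(G_x)$. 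Setting $G = \bigcup_{x \in B} G_x \supset B$, weak inner-continuity in the form of Lemma~\ref{lem:wic} gives $\nu(G) = \bigoplus_{x \in B} \nu(G_x) \leqslant s$, whence $\nu(B) \leqslant \nu(G) \leqslant s$ by monotonicity. The Remark then yields $u \geqslant \nu(B)$, so $\nu(B)$ is the least upper-bound of $\{ c(x) : x \in B \}$.

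Complete maxitivity is now immediate: for a family $\{ B_j \}_{j}$ with Borel union $B$ one has $\{ c(x) : x \in B \} = \bigcup_j \{ c(x) : x \in B_j \}$, and a short check shows that the least upper-bound of $\{ \nu(B_j) \}$ is $\bigoplus_{x \in B} c(x) = \nu(B)$. Outer-continuity reduces to the inequality $\nu^{+}(B) \leqslant \nu(B)$, the reverse being automatic by monotonicity. To obtain it I would apply the construction of the previous paragraph with $u = \nu(B)$: for $s \gg \nu(B)$ there is an open $G \supset B$ with $\nu(G) \leqslant s$, so $\nu^{+}(B) \leqslant \nu(G) \leqslant s$, and the Remark gives $\nu(B) \geqslant \nu^{+}(B)$.

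For inner-continuity I would first record that $c$ is antitone for the specialization order: if $a \leqslant y$ then every open set containing $a$ contains $y$, so the infimum defining $c(y)$ runs over a larger family than that defining $c(a)$, giving $c(y) \leqslant c(a)$. Hence, for a compact Borel $K$, each $y \in \uparrow\!\! K$ lies above some $a \in K$ and thus $c(y) \leqslant c(a) \leqslant \nu(K)$; applying the density formula to the compact saturated Borel set $\uparrow\!\! K$ yields $\nu(\uparrow\!\! K) = \nu(K)$. Since $\{ \nu(\uparrow\!\! K) : K \in \mathrsfs{K},\, K \subset B \} = \{ \nu(K) : K \in \mathrsfs{K},\, K \subset B \}$, each member is $\leqslant \nu(B)$ while the subfamily $\{ \nu([x]) : x \in B \}$ already has supremum $\nu(B)$, so $\nu(B)$ is the least upper-bound of the family; this is exactly inner-continuity. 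Combining it with the outer-continuity established above gives regularity, and complete maxitivity has already been shown.
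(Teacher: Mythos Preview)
Your proof is correct, and the core mechanism is identical to the paper's: pick $s \gg u$, use weak outer-continuity (via Lemma~\ref{lem:reg0}) to find open $G_x \ni x$ with $s \geqslant \nu(G_x)$, then apply Lemma~\ref{lem:wic} to bound $\nu(\bigcup_x G_x)$ by $s$. The only difference is packaging: you first establish the cardinal density formula $\nu(B) = \bigoplus_{x\in B}\nu([x])$ and then read off complete maxitivity, outer-continuity, and inner-continuity from it, whereas the paper first proves the intermediate identity $\nu^{+}(B) = \bigoplus_{K\in\mathrsfs{K},\,K\subset B}\nu(K)$, derives regularity from that, and argues complete maxitivity separately. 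Your route has the minor advantage of anticipating Theorem~\ref{thm:reg0} (where the paper finally extracts the usc density), making the equivalence ``regular $\Leftrightarrow$ usc density'' essentially immediate; the paper's route has the minor advantage of getting $\nu = \nu^{+}$ on all Borel sets in one step without passing through the antitonicity of $c$ for the specialization order.
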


\begin{proof}
Let $\nu$ be an $L$-valued weakly outer-continuous and weakly inner-continuous maxitive measure. 
Assume that, for some $B \in \mathrsfs{B}$, $\nu^{+}(B)$ is not the least upper-bound of $\{ \nu(K) : K \in \mathrsfs{K}, K \subset B \}$. Then there exists some upper-bound $u \in L$ of $\{ \nu(K) : K \in \mathrsfs{K}, K \subset B \}$ such that $u \not\geqslant \nu^{+}(B)$. 
Since $L$ is continuous, there exists some $s \gg u$ with $s \not\geqslant \nu^{+}(B)$. 
If $x \in B$, then $K_x = [x]$ is a compact Borel subset, and $K_x \subset B$ by Lemma~\ref{lem:tilde}. 
So $s \gg \nu(K_x) = \nu^{+}(K_x)$ since $\nu$ is weakly outer-continuous, hence there exists some $G_x \ni x$ such that $s \geqslant \nu(G_x)$. Since  $\nu$ is weakly inner-continuous, we deduce $s \geqslant \nu(G)$, where $G = \bigcup_{x \in B} G_x \supset B$, so that $s \geqslant \nu^{+}(B)$, a contradiction. 

So we have proved that $\nu^{+}(B) = \bigoplus \{ \nu(K) : K \in \mathrsfs{K}, K \subset B \}$, for all $B \in \mathrsfs{B}$. From this we deduce that $\nu^{+}(B) = \nu(B)$, i.e.\ $\nu$ is outer-continuous. This implies that $\nu(\uparrow\!\! K) = \nu^{+}(\uparrow\!\! K) = \nu^{+}(K) = \nu(K)$ for all $K \in \mathrsfs{K}$, and now inner-continuity of $\nu$ is clear. 

To prove that $\nu$ is completely maxitive, we let $(B_j)_{j\in J}$ be some family of Borel subsets such that $B := \bigcup_{j\in J} B_j \in \mathrsfs{B}$. 
%Since $\nu$ is inner-continuous, it suffices to show that $\bigoplus_{j\in J} \nu(B_j) \geqslant \nu(\uparrow\!\! K)$ for every Borel compact subset $K \subset B$. 
We also take an upper-bound $u$ of $\{ \nu(B_j) : j \in J \}$ and some $s \gg u$. 
%So let $s \gg \bigoplus_{j\in J} \nu(B_j)$. 
Since $\nu$ is outer-continuous there exists, for all $j \in J$, some $G_j \supset B_j$ such that $s \geqslant \nu(G_j)$. By Equation~(\ref{eq:eqo}) in Lemma~\ref{lem:wic} we get $s \geqslant \nu(\bigcup_{j\in J} G_j)$, so that $s \geqslant \nu(B)$. Since $L$ is continuous we obtain $u \geqslant \nu(B)$. As a consequence, $\nu(B)$ is the least upper-bound of $\{ \nu(B_j) : j \in J \}$. This proves that $\nu$ is completely maxitive. 
\end{proof}	

The following result improves \cite[Corollary~3.12]{Akian99}. 

\begin{corollary}\label{coro:sc}
Assume that $L$ is a domain. 
Then, on a second-countable topological space, every $L$-valued weakly outer-continuous $\sigma$-maxitive measure is regular. 
\end{corollary}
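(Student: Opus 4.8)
The plan is to reduce regularity to weak inner-continuity using the machinery already assembled. Since $\nu$ is weakly outer-continuous by hypothesis, Proposition~\ref{lem:loccomp} tells us that it suffices to establish that $\nu$ is also weakly inner-continuous; the two properties together then yield regularity (and, as a bonus, complete maxitivity). To verify weak inner-continuity I would invoke the characterization of Lemma~\ref{lem:wic}, according to which I need only show that
\[
\nu(\bigcup \mathrsfs{O}) = \bigoplus \nu(\mathrsfs{O})
\]
holds for every family $\mathrsfs{O}$ of open subsets of $E$.

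The role of second-countability is precisely to collapse an arbitrary union of open sets into a countable one. Writing $G = \bigcup \mathrsfs{O}$, I would use the fact that a second-countable space is (hereditarily) Lindel\"of: fixing a countable base of $E$, each point of $G$ lies in some basic open set contained in a member of $\mathrsfs{O}$, and selecting one member of $\mathrsfs{O}$ per basic set that arises in this way produces a countable subfamily $\{O_n\}_{n \in \mathbb{N}} \subset \mathrsfs{O}$ with $\bigcup_n O_n = G$.

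Now $\sigma$-maxitivity applies directly to this countable subfamily of Borel sets, yielding that the supremum $\bigoplus_n \nu(O_n)$ exists and equals $\nu(G)$. Monotonicity of $\nu$ gives $\nu(O) \leqslant \nu(G)$ for every $O \in \mathrsfs{O}$, so $\nu(G)$ is an upper bound of $\nu(\mathrsfs{O})$; conversely, any upper bound of $\nu(\mathrsfs{O})$ bounds the subfamily $\{\nu(O_n)\}_{n \in \mathbb{N}}$ and hence exceeds $\bigoplus_n \nu(O_n) = \nu(G)$. Therefore $\bigoplus \nu(\mathrsfs{O})$ exists and equals $\nu(G)$, which is exactly Equation~(\ref{eq:eqo}). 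Weak inner-continuity follows from Lemma~\ref{lem:wic}, and Proposition~\ref{lem:loccomp} closes the argument.

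As for the main obstacle: there is no deep difficulty here, since the proof is essentially the observation that second-countability upgrades $\sigma$-maxitivity to the arbitrary-union identity. The only point requiring genuine care is the extraction of a countable subfamily with the \emph{same} union $G$ (not merely a countable subcover of some compact piece of $G$), which is where the Lindel\"of property, rather than compactness, is the correct tool.
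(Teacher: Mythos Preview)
Your argument is correct and follows essentially the same route as the paper: reduce to weak inner-continuity via Proposition~\ref{lem:loccomp}, then verify Equation~(\ref{eq:eqo}) using second-countability together with $\sigma$-maxitivity, and conclude with Lemma~\ref{lem:wic}. The only cosmetic difference is that the paper works with the countable family $\mathrsfs{V}$ of basic open sets contained in some member of $\mathrsfs{O}$, whereas you extract a countable subfamily of $\mathrsfs{O}$ itself; both choices give a countable family with union $G$ and the rest is identical.
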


\begin{proof}
Let $E$ be second-countable and $\nu$ be an $L$-valued weakly outer-continuous $\sigma$-maxitive measure on $\mathrsfs{B}$. 
Since $E$ is second-countable, there is some countable base $\mathrsfs{U}$ for the topology $\mathrsfs{G}$. 
To prove that $\nu$ is regular, we want to use Proposition~\ref{lem:loccomp}, thus we show that $\nu$ is weakly inner-continuous. So let $\mathrsfs{O}$ be a family of open subsets of $E$, and let $G = \bigcup \mathrsfs{O}$. 
We let $\mathrsfs{V} = \{ V \in \mathrsfs{U} : \exists O \in \mathrsfs{O}, V \subset O \}$. 
Since $\mathrsfs{V}$ is countable with union $G$ and $\nu$ is $\sigma$-maxitive, we deduce that $\nu(G) = \bigoplus \nu(\mathrsfs{V}) \leqslant \bigoplus \nu(\mathrsfs{O})$. By Lemma~\ref{lem:wic}, $\nu$ is weakly inner-continuous, and the proof is complete.
\end{proof}

\subsection{Smoothness}

From now on, all ($L$-valued) maxitive measures are assumed to be defined on the Borel $\sigma$-algebra $\mathrsfs{B}$ of a topological space $E$.  
If $\mathrsfs{A}$ is a collection of elements of $\mathrsfs{B}$ closed under filtered intersections, the maxitive measure $\nu$ is \textit{$\mathrsfs{A}$-smooth} if  
\begin{equation}\label{eq:filta}
\bigwedge_{j\in J} \nu(A_j) = \nu(\bigcap_{j \in J} A_j), 
\end{equation}
for every filtered family $(A_j)_{j\in J}$ of elements of $\mathrsfs{A}$. 

An $L$-valued maxitive measure $\nu$ on $\mathrsfs{B}$ is called \textit{saturated} if for all $K \in \mathrsfs{K}$ we have $\nu(K) = \nu(\uparrow\!\! K)$. Inner-continuous maxitive measures and weakly outer-continuous maxitive measures are always saturated, while weak inner-con\-tinuity does not imply saturation in general. Note however that saturation is always satisfied if the space $E$ is $T_1$. 

Variants of Propositions~\ref{prop:k} and \ref{prop:f} below were formulated and pro\-ved in \cite{Akian95} in the case where $E$ is a Hausdorff topological space and $L$ is a continuous lattice, see also \cite[Proposition~13]{Heckmann98}. 
Another variant of the following result is \cite[Proposition~2.2(a)]{Norberg97b}, which treats the case of real-valued capacities on non-Hausdorff spaces. 

\begin{proposition}\label{prop:k}
Assume that $L$ is a domain. 
Then, on a quasisober space, 
every $L$-valued weakly outer-continuous maxitive measure   
is $\mathrsfs{Q}$-smooth saturated. 
The converse statement holds in locally compact quasisober spaces. 
\end{proposition}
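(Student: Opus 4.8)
The plan is to prove the two directions separately. For the forward direction, suppose $\nu$ is an $L$-valued weakly outer-continuous maxitive measure on a quasisober space $E$. Saturation is immediate: for $K \in \mathrsfs{K}$, weak outer-continuity gives $\nu(K) = \bigwedge_{G \supset K} \nu(G)$, and since $\uparrow\!\! K$ is contained in exactly the same open sets as $K$ (every open set is saturated, so $G \supset K$ iff $G \supset \uparrow\!\! K$), the same infimum computes $\nu^{+}(\uparrow\!\! K)$; combined with weak outer-continuity applied to the compact Borel set $\uparrow\!\! K$ this yields $\nu(K) = \nu(\uparrow\!\! K)$. The substantive part is $\mathrsfs{Q}$-smoothness. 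Let $(Q_j)_{j \in J}$ be a filtered family in $\mathrsfs{Q}$ with intersection $Q = \bigcap_j Q_j$, which lies in $\mathrsfs{Q}$ by the Hofmann--Mislove theorem (Theorem~\ref{thm:dual}). Clearly $\nu(Q) \leqslant \bigwedge_j \nu(Q_j)$ by monotonicity. For the reverse inequality I would exploit that $L$ is a domain: it suffices to show that every $s \gg \bigwedge_j \nu(Q_j)$ is an upper bound forcing $s \geqslant \nu(Q)$, or dually, to use weak outer-continuity of $\nu$ at $Q$ together with the filtered-intersection clause of Hofmann--Mislove.

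\textbf{Forward direction, key computation.} Since $\nu$ is weakly outer-continuous and $Q \in \mathrsfs{K}$, we have $\nu(Q) = \bigwedge_{G \supset Q} \nu(G)$. So fix an open $G \supset Q = \bigcap_j Q_j$. The second assertion of Theorem~\ref{thm:dual} applies precisely here: since $(Q_j)$ is filtered and $\bigcap_j Q_j \subset G$, there is some index $j$ with $Q_j \subset G$, whence $\nu(G) \geqslant \nu(Q_j) \geqslant \bigwedge_{k} \nu(Q_k)$. Taking the infimum over all open $G \supset Q$ gives $\nu(Q) \geqslant \bigwedge_j \nu(Q_j)$, and combined with monotonicity this is the desired equality~\eqref{eq:filta} on $\mathrsfs{Q}$. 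Thus $\nu$ is $\mathrsfs{Q}$-smooth and saturated.

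\textbf{Converse direction.} Now assume $E$ is locally compact quasisober and $\nu$ is $\mathrsfs{Q}$-smooth saturated; I must deduce weak outer-continuity, i.e.\ $\nu(K) = \bigwedge_{G \supset K} \nu(G)$ for all $K \in \mathrsfs{K}$. The inequality $\nu(K) \leqslant \nu(G)$ for each open $G \supset K$ is trivial, so the point is $\nu(K) \geqslant \bigwedge_{G \supset K} \nu(G)$. Local compactness is the tool that interposes compact saturated sets between $K$ and its open neighbourhoods: for a locally compact (sober) space, given $\uparrow\!\! K \subset G$ one can find $Q \in \mathrsfs{Q}$ with $\uparrow\!\! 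K \subset Q^{o} \subset Q \subset G$, and more precisely $\uparrow\!\! K = \bigcap \{ Q \in \mathrsfs{Q} : \uparrow\!\! K \subset Q^{o} \}$ as a filtered intersection. I would apply $\mathrsfs{Q}$-smoothness to this filtered family to get $\nu(\uparrow\!\! K) = \bigwedge \nu(Q)$ over such $Q$; then for each such $Q$ we have $\nu(Q) \leqslant \nu(Q^{o})$ with $Q^{o}$ open and containing $K$, so $\bigwedge \nu(Q) \leqslant \bigwedge_{G \supset K} \nu(G)$ is not quite what is wanted—rather I use $Q^{o} \supset K$ to bound $\bigwedge_{G \supset K} \nu(G) \leqslant \nu(Q^{o})$... the clean route is $\bigwedge_{G \supset K}\nu(G) \leqslant \bigwedge_Q \nu(Q^o) \leqslant \bigwedge_Q \nu(Q) = \nu(\uparrow\!\! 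K) = \nu(K)$, the last step by saturation.

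The main obstacle I anticipate is the converse: extracting, from local compactness, the filtered family of compact saturated neighbourhoods whose intersection is exactly $\uparrow\!\! K$ (not merely $K$), and correctly threading the saturation hypothesis so that the interpolating $Q$'s and their interiors $Q^o$ line up to sandwich $\nu(K)$ between the open-neighbourhood infimum and itself. The filteredness of $\{Q : \uparrow\!\! K \subset Q^o\}$ relies on $\mathrsfs{Q}$ being closed under finite unions (Theorem~\ref{thm:dual}) so that interiors behave well under intersection, and this is the step where the precise definition of local compactness for non-Hausdorff sober spaces must be invoked with care.
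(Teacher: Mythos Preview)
Your proof is correct and follows essentially the same route as the paper: Hofmann--Mislove plus weak outer-continuity at $Q$ for $\mathrsfs{Q}$-smoothness, and a filtered family of compact saturated neighbourhoods of $\uparrow\!\! K$ furnished by local compactness (together with saturation) for the converse. One small correction: filteredness of $\{ Q \in \mathrsfs{Q} : \uparrow\!\! K \subset Q^{o} \}$ does not rely on closure of $\mathrsfs{Q}$ under finite unions, but comes directly from local compactness applied to the open set $Q_1^{o} \cap Q_2^{o} \supset \uparrow\!\! K$.
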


\begin{proof}
Let $E$ be quasisober, let $\nu$ be an $L$-valued weakly outer-continuous maxitive measure on $\mathrsfs{B}$, and let $(Q_j)_{j\in J}$ be a filtered family of compact saturated subsets of $E$.  
Recall that $Q = \bigcap_{j \in J} Q_j$ is compact saturated, since $E$ is assumed quasisober. 
The set $\{  \nu(Q_j) : j \in J \}$ admits $\nu(Q)$ as a lower-bound. Take another lower-bound $\ell$, and let $G \in \mathrsfs{G}$ such that $G \supset Q$. 
By the Hofmann--Mislove theorem (Theorem~\ref{thm:dual}), there is some $j_0 \in J$ such that $G \supset Q_{j_0}$. Thus, $\nu(G) \geqslant \nu(Q_{j_0})$, so that $\nu(G) \geqslant \ell$, for all $G \supset Q$. Since $\nu$ is weakly outer-continuous, we deduce that $\nu(Q) \geqslant \ell$. 
We have shown that $\nu(Q)$ is the infimum of $\{ \nu(Q_j) : j \in J \}$. This proves that $\nu$ is $\mathrsfs{Q}$-smooth. 

Now assume that $E$ is locally compact quasisober, and let $\nu$ be an $L$-valued $\mathrsfs{Q}$-smooth saturated maxitive measure on $\mathrsfs{B}$. 
If $Q$ is a compact saturated subset, then by local compactness of $E$ there exists a filtered family $(Q_j)_{j \in J}$ of compact saturated subsets with $\bigcap_{j\in J} Q_j = Q$ and $Q \subset Q_j^{o}$. Since $\nu$ is $\mathrsfs{Q}$-smooth, this implies that 
$$
\nu(Q) = \bigwedge_{G \in \mathrsfs{G}, G \supset Q} \nu(G), 
$$
i.e.\ $\nu(Q) = \nu^{+}(Q)$, for all $Q \in \mathrsfs{Q}$. 
Let us show that $\nu$ and $\nu^{+}$ coincide on $\mathrsfs{K}$. If $K \in \mathrsfs{K}$, then $\nu(K) = \nu(\uparrow\!\! K)$ since $\nu$ is saturated. Also, because $G \supset \uparrow\!\! K$ if and only if $G \supset K$ for all open subsets $G$, we have $\nu^{+}(\uparrow\!\! K) = \nu^{+}(K)$. So this gives $\nu(K) = \nu(\uparrow\!\! K) = \nu^{+}(\uparrow\!\! K) = \nu^{+}(K)$, and we have shown that $\nu$ is weakly outer-continuous. 
\end{proof}

\begin{remark}
The first part of Proposition~\ref{prop:k} remains true for $L$-valued weakly outer-con\-tinuous monotone set functions. 
\end{remark}

\subsection{Tightness}

Tightness of maxitive measures can be defined by analogy with tightness of additive measures, so we say that an $L$-valued maxitive measure $\nu$ on $\mathrsfs{B}$ is \textit{tight} if 
$$
\bigwedge_{K \in \mathrsfs{K}} \nu(E \setminus K) = 0.  
$$ 
The following lemma slightly extends \cite[Theorem~III-2.11]{Gierz03}, which states that every continuous sup-semilattice is join-continuous. 

\begin{lemma}\label{lem:jcont}
Assume that $L$ is a domain. Let $F$ be a filter of $L$ and $t \in L$ such that, for all $f \in F$, $t \oplus f$ exists. Then $t \oplus \bigwedge F$ exists and satisfies 
$
t \oplus \bigwedge F = \bigwedge (t \oplus F). 
$
\end{lemma}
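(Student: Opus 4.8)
The plan is to set $a := \bigwedge F$, which exists because $L$ is filtered-complete, and to prove that $b := \bigwedge(t \oplus F)$ is exactly the join $t \oplus a$. First I would check that $b$ is well defined. The set $t \oplus F = \{t \oplus f : f \in F\}$ is nonempty and filtered: given $t \oplus f_1, t \oplus f_2 \in t \oplus F$, the filter property of $F$ supplies $f_3 \in F$ with $f_3 \leqslant f_1$ and $f_3 \leqslant f_2$, and monotonicity of the binary join then gives $t \oplus f_3 \leqslant t \oplus f_1$ and $t \oplus f_3 \leqslant t \oplus f_2$. Since the infimum of a filtered set coincides with the infimum of the filter it generates, filtered-completeness guarantees that $b = \bigwedge(t \oplus F)$ exists.

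Next I would verify that $b$ is an upper bound of $\{t, a\}$. For every $f \in F$ we have $t \oplus f \geqslant t$ and $t \oplus f \geqslant f \geqslant a$, so both $t$ and $a$ are lower bounds of $t \oplus F$; taking infima yields $t \leqslant b$ and $a \leqslant b$.

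The heart of the argument is to show that $b$ is the \emph{least} upper bound. Let $u$ be any upper bound of $\{t, a\}$. By the Remark in Section~\ref{sec:domain}, it suffices to prove $s \geqslant b$ for every $s \gg u$. Fix such an $s$. The key observation, and the step I expect to be the main obstacle, is that the way-above relation is tailored precisely to this situation: since $u \geqslant a = \bigwedge F$ and $s \gg u$, the very definition of $\gg$ applied to the filter $F$ forces $s \in F$. Consequently $t \oplus s$ exists (by hypothesis, as $s \in F$) and belongs to $t \oplus F$, so $b \leqslant t \oplus s$. Moreover $s \gg u$ implies $s \geqslant u$ (apply the definition of $\gg$ to the principal filter $\{x \in L : x \geqslant u\}$, whose infimum is $u$), hence $s \geqslant u \geqslant t$ and therefore $t \oplus s = s$; this gives $b \leqslant s$, as required.

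Putting the three steps together, $b$ is the least upper bound of $\{t, a\}$, so the join $t \oplus \bigwedge F = t \oplus a$ exists and equals $\bigwedge(t \oplus F)$, which is the claim. The only delicate point is the implication $s \gg u \Rightarrow s \in F$; everything else reduces to routine monotonicity of the join together with filtered-completeness of $L$.
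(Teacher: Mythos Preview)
Your proof is correct and follows essentially the same approach as the paper's: both arguments hinge on the observation that $s \gg u \geqslant \bigwedge F$ forces $s \in F$ (equivalently, $s \geqslant f$ for some $f \in F$), and then combine this with $s \geqslant u \geqslant t$ to bound $\bigwedge(t \oplus F)$ above by $s$. The only cosmetic difference is that the paper phrases the least-upper-bound step as a proof by contradiction, whereas you give the direct version via the Remark in Section~\ref{sec:domain}; your treatment is in fact slightly more careful in justifying why $\bigwedge(t \oplus F)$ exists (passing from the filtered set $t \oplus F$ to the filter it generates).
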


\begin{proof}
The subset $t \oplus F$ is filtered, hence has an infimum. 
Suppose that $\bigwedge (t \oplus F)$ is not the least upper-bound of $\{t, \bigwedge F \}$. Then there exists some upper-bound $u$ of $\{t, \bigwedge F \}$ such that $u \not\geqslant \bigwedge (t \oplus F)$. Since $L$ is continuous, there is some $s \gg u$ such that $s \not\geqslant \bigwedge (t \oplus F)$. Remembering that $u \geqslant \bigwedge F$, there is some $f \in F$ such that $s \geqslant f$. Also, $s \geqslant u \geqslant t$, so that $s \geqslant t \oplus f \geqslant \bigwedge (t \oplus F)$, a contradiction. 
\end{proof}

A maxitive measure is \textit{$\mathrsfs{Q} \mathrsfs{F}$-smooth} if it is $\mathrsfs{Q}$-smooth and $\mathrsfs{F}$-smooth. 
The second part of the following result was proved by Puhalskii \cite[Theorem~1.7.8]{Puhalskii01} in the case where $L = \overline{\reels}_+$. 

\begin{proposition}\label{prop:f}
Assume that $L$ is a domain. 
Then, on a quasisober space, 
every $L$-valued tight weakly outer-continuous maxitive measure  
is $\mathrsfs{Q} \mathrsfs{F}$-smooth saturated. 
The converse statement holds in locally compact quasisober spaces and in completely metrizable spaces. 
\end{proposition}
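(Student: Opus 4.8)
The plan is to handle the two assertions separately, leaning on Proposition~\ref{prop:k} for everything except $\mathrsfs{F}$-smoothness in the forward direction and tightness in the converse. For the forward implication, let $\nu$ be $L$-valued, tight and weakly outer-continuous on a quasisober space. By Proposition~\ref{prop:k} it is already $\mathrsfs{Q}$-smooth and saturated, so only $\mathrsfs{F}$-smoothness remains. Fix a filtered family $(F_j)_{j\in J}$ of closed sets with $F=\bigcap_j F_j$. Since $\nu(F)$ is a lower-bound of $\{\nu(F_j):j\in J\}$ and $L$ is continuous, it suffices, by the remark at the end of Section~\ref{sec:domain}, to show that every $s\gg\nu(F)$ dominates some $\nu(F_{j_0})$: such an $s$ then dominates any lower-bound $\ell$ via $\ell\leqslant\nu(F_{j_0})\leqslant s$, forcing $\nu(F)\geqslant\ell$.

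So fix $s\gg\nu(F)$. First I localise to a compact set. The family $\{E\setminus K:K\in\mathrsfs{K}\}$ is filtered and, by tightness, $\bigwedge_K\nu(E\setminus K)=0$; Lemma~\ref{lem:jcont} with $t=\nu(F)$ then gives $\bigwedge_K\big(\nu(F)\oplus\nu(E\setminus K)\big)=\nu(F)$. As $s\gg\nu(F)$ and this infimum is filtered, there is $K_0\in\mathrsfs{K}$ with $s\geqslant\nu(F)\oplus\nu(E\setminus K_0)$, in particular $s\geqslant\nu(E\setminus K_0)$. Next I invoke $\mathrsfs{Q}$-smoothness: each $F_j\cap K_0$ is compact Borel, so $\uparrow\!\!(F_j\cap K_0)\in\mathrsfs{Q}$ and the family $(\uparrow\!\!(F_j\cap K_0))_j$ is filtered. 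The crucial point is the identity
\[
\bigcap_{j\in J}\uparrow\!\!(F_j\cap K_0)=\uparrow\!\!(F\cap K_0).
\]
The inclusion $\supseteq$ is clear. For $\subseteq$, take $x$ in the left-hand side; recalling that $y\leqslant x$ is equivalent to $y\in\overline{x}$, the sets $D_j=F_j\cap K_0\cap\overline{x}$ are nonempty closed subsets of the compact set $K_0$ forming a filtered family, so by the finite intersection property $\bigcap_j D_j=F\cap K_0\cap\overline{x}\neq\emptyset$, giving $x\in\uparrow\!\!(F\cap K_0)$. Using saturation and $\mathrsfs{Q}$-smoothness, $\bigwedge_j\nu(F_j\cap K_0)=\bigwedge_j\nu(\uparrow\!\!(F_j\cap K_0))=\nu(\uparrow\!\!(F\cap K_0))=\nu(F\cap K_0)\leqslant\nu(F)$. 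Since $s\gg\nu(F)$ dominates this filtered infimum, there is $j_0$ with $s\geqslant\nu(F_{j_0}\cap K_0)$, and finally $\nu(F_{j_0})\leqslant\nu(F_{j_0}\cap K_0)\oplus\nu(E\setminus K_0)\leqslant s$, as wanted.

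For the converse, assume $\nu$ is $\mathrsfs{Q}\mathrsfs{F}$-smooth and saturated. Weak outer-continuity follows from Proposition~\ref{prop:k} when $E$ is locally compact quasisober; when $E$ is completely metrizable, it follows from $\mathrsfs{F}$-smoothness by approximating each closed $K$ from outside by the closed sets $\{x:d(x,K)\leqslant 1/n\}$, whose intersection is $K$ and which sandwich the open sets $\{x:d(x,K)<1/n\}$. For tightness one must show that every $s\gg 0$ dominates some $\nu(E\setminus K)$. On a locally compact quasisober space this is immediate: the interiors of compact saturated sets cover $E$, their finite unions $V$ (compact saturated by Theorem~\ref{thm:dual}) are directed up to $E$, so the closed sets $E\setminus V$ are filtered with empty intersection and $\mathrsfs{F}$-smoothness gives $\bigwedge_V\nu(E\setminus V)=0$; thus $s\geqslant\nu(E\setminus V)\geqslant\nu(E\setminus Q)$ for a suitable $Q\in\mathrsfs{K}$ containing $V$.

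The remaining and most delicate case is tightness on a completely metrizable space, where no relatively compact open sets exist and the local-compactness shortcut fails. Here I would run the classical total-boundedness construction: at each scale $1/n$, the closed complements of finite unions of open $1/n$-balls are filtered with empty intersection, so $\mathrsfs{F}$-smoothness yields a finite union $V_n$ of $1/n$-balls with $\nu(E\setminus\overline{V_n})\leqslant s$, and $K=\bigcap_n\overline{V_n}$ is closed and totally bounded, hence compact by completeness. The main obstacle is to bound $\nu(E\setminus K)=\nu\big(\bigcup_n(E\setminus\overline{V_n})\big)$ by $s$: each term has $\nu$-value at most $s$, but the union is only countable, so finite maxitivity does not suffice. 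Controlling this countable assembly is exactly where the argument must exploit $\mathrsfs{F}$-smoothness through the decreasing closed sets $\bigcap_{n\leqslant N}\overline{V_n}\downarrow K$, together with the pointwise density $x\mapsto\nu^{+}([x])$ of Lemma~\ref{lem:reg0}; this, and not the forward direction, is where the real work lies.
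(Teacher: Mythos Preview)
Your forward implication is correct and runs parallel to the paper's argument: both hinge on the identity $\bigcap_j\uparrow\!\!(F_j\cap K)=\uparrow\!\!(F\cap K)$ for a fixed compact Borel $K$, combined with $\mathrsfs{Q}$-smoothness (from Proposition~\ref{prop:k}) and tightness via join-continuity (Lemma~\ref{lem:jcont}). The differences are cosmetic. You prove the identity by the finite-intersection property applied to the closed-in-$K_0$ sets $F_j\cap K_0\cap\overline{x}$, whereas the paper uses a covering argument; and you organise the estimate around $s\gg\nu(F)$ and a single $K_0$, whereas the paper takes an arbitrary lower bound $\ell$, keeps $K$ free, and only invokes tightness at the very end. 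Both are valid; yours is arguably more concrete, the paper's slightly more symmetric.

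In the locally compact converse your parenthetical ``(compact saturated by Theorem~\ref{thm:dual})'' is attached to the wrong object: the finite unions $V$ of \emph{interiors} are open, not compact; it is the corresponding finite union $Q$ of the compact saturated sets that is compact saturated. Your final sentence shows you understand this, so it is a writing slip rather than a gap. The paper avoids the issue by working directly with the closed sets $E\setminus K^{o}$, $K\in\mathrsfs{K}$, and applying $\mathrsfs{F}$-smoothness to those.

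For the completely metrizable converse, your derivation of weak outer-continuity via the closed neighbourhoods $\{d(\cdot,K)\leqslant 1/n\}$ is fine and matches the paper's remark. The tightness, however, you explicitly leave unfinished. Two comments. First, the paper does not prove it either: it simply refers to Puhalskii's argument, so relative to the paper there is no missing idea on your side. Second, your diagnosis is somewhat off. The suggestion that $\mathrsfs{F}$-smoothness on $\bigcap_{n\leqslant N}\overline{V_n}\downarrow K$ together with the density of Lemma~\ref{lem:reg0} resolves the countable-union obstacle is not convincing as stated: $\mathrsfs{F}$-smoothness on that decreasing sequence controls $\nu(K)$, not $\nu(E\setminus K)$, and the density formula concerns compact sets, again the wrong side. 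Your claim that ``this, and not the forward direction, is where the real work lies'' also inverts the paper's own emphasis: the forward direction is where the paper spends its effort, while the completely-metrizable tightness is dispatched by citation.
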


\begin{proof}
Let $E$ be quasisober, let $\nu$ be an $L$-valued tight weakly outer-con\-tinuous maxitive measure on $\mathrsfs{B}$, and let $(F_j)_{j\in J}$ be a filtered family of closed subsets of $E$.  
Fix some compact Borel subset $K$, and let $F = \bigcap_{j \in J} F_j$. Then $F_j \cap K$ and $F \cap K$ are compact, hence $\uparrow\!\!(F_j \cap K)$ and $\uparrow\!\! (F \cap K)$ are compact saturated. 
Let us show that 
\begin{equation}\label{eq:inter}
\bigcap_{j\in J} \uparrow\!\!(F_j \cap K) = \uparrow\!\! (F \cap K). 
\end{equation} 
The inclusion $\supset$ is clear. For the reverse inclusion, let $x \in E$ such that $x \notin \uparrow\!\! (F \cap K)$. Then there is some open subset $G$ containing $F \cap K$ such that $x \notin G$. As a consequence, the compact subset $K$ is included in the union of the directed family $(G \cup (E \setminus F_j))_{j\in J}$, so there exists some $j_0 \in J$ such that $K \subset G \cup (E \setminus F_{j_0})$. This rewrites as $F_{j_0} \cap K \subset G$, so that $\uparrow\!\! (F_{j_0} \cap K) \subset G$. Hence, $x \notin \uparrow\!\! (F_{j_0} \cap K)$, and Equation~(\ref{eq:inter}) is proved.  

By Proposition~\ref{prop:k}, $\nu$ is $\mathrsfs{Q}$-smooth, so 
$$
\bigwedge_{j \in J} \nu(\uparrow\!\! (F_j \cap K)) = \nu(\uparrow\!\! (F \cap K)). 
$$ 
Since $\nu$ is weakly outer-continuous, $\nu$ is saturated, hence $\bigwedge_j \nu(F_j \cap K) = \nu(F \cap K)$. Now pick some lower-bound $\ell$ of the set $\{ \nu(F_j) : j \in J \}$. Thanks to Lemma~\ref{lem:jcont} (join-continuity of $L$), we have $\ell \leqslant \bigwedge_j (\nu(F_j \cap K) \oplus \nu(E \setminus K)) = \nu(F \cap K) \oplus \nu(E \setminus K)$. 
The tightness of $\nu$ and the join-continuity of $L$ imply $\ell \leqslant \nu(F)$, and the result is proved. 

For the converse statement, first assume that $E$ is locally compact quasisober, and let $\nu$ be an $L$-valued $\mathrsfs{Q} \mathrsfs{F}$-smooth saturated maxitive measure on $\mathrsfs{B}$. Then $\nu$ is weakly outer-continuous by Proposition~\ref{prop:k}. 
Moreover, the collection $\{ E \setminus K^{o} : K \in \mathrsfs{K} \}$ has empty intersection since $E$ is locally compact, is filtered, and is made of closed subsets. 
Since $\nu$ is $\mathrsfs{F}$-smooth, this implies $\bigwedge_{K \in \mathrsfs{K}} \nu(E \setminus K^{o}) = 0$. 
If $t \gg 0$, this gives some $K \in \mathrsfs{K}$ with $t \geqslant \nu(E \setminus K^{o})$, so that $t \geqslant \nu(E \setminus K)$. 
Since $L$ is continuous, we conclude that $\nu$ is tight. 

Now if $E$ is a completely metrizable space, the second part of the proof of Proposition~\ref{prop:k} still applies to show that an $L$-valued $\mathrsfs{F}$-smooth maxitive measure $\nu$ is weakly outer-continuous, for 
every compact subset $K$ is the filtered intersection of some family $(F_j)_{j}$ of closed subsets with  $K \subset F_j^{o}$. To see why this holds, define $F_j = \bigcap_{k=1}^j \overline{G_k}$ where, for all $k \geqslant 1$, $G_k$ is a finite union of open balls of radius $1/k$ covering $K$. 
For tightness, one can follow Puhalskii's proof \cite[Theorem~1.7.8]{Puhalskii01} (although this author considered only $\overline{\reels}_+$-valued maxitive measures). 
\end{proof}

\begin{problem}
Completely metrizable spaces and locally compact quasisober spaces are Baire spaces (see \cite[Theorem~3.47]{Aliprantis06} and \cite[Corollary~I-3.40.9]{Gierz03}). Does the previous result hold for Baire spaces?
\end{problem}

\begin{proposition}\label{prop:trpolish}
Assume that $L$ is a domain. 
Then, on a Polish space, every $L$-valued $\mathrsfs{F}$-smooth $\sigma$-maxitive measure is tight regular. 
\end{proposition}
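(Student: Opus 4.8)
The plan is to treat regularity and tightness separately, exploiting that a Polish space is separable, completely metrizable, second-countable, and---being metrizable, hence Hausdorff, hence sober---quasisober. I would reduce regularity to weak outer-continuity (which needs only complete metrizability and second-countability) and then prove tightness by a Prokhorov-type construction, which I expect to be the only genuinely new ingredient.

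For regularity, the first step is to show that $\nu$ is weakly outer-continuous. Since $E$ is completely metrizable, every compact subset $K$ is the filtered intersection of a family $(F_j)_j$ of closed subsets with $K \subset F_j^{o}$ (take $F_j = \bigcap_{k=1}^j \overline{G_k}$, with $G_k$ a finite union of open $1/k$-balls covering $K$), exactly as in the proof of Proposition~\ref{prop:f}. Feeding this into the argument from the second part of the proof of Proposition~\ref{prop:k} (applied to completely metrizable spaces as in Proposition~\ref{prop:f}), the $\mathrsfs{F}$-smoothness of $\nu$ yields $\nu(K) = \nu^{+}(K)$ for every $K \in \mathrsfs{K}$, i.e.\ weak outer-continuity. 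Now $E$ is second-countable and $\nu$ is a weakly outer-continuous $\sigma$-maxitive measure, so Corollary~\ref{coro:sc} applies at once and gives that $\nu$ is regular.

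It remains to prove tightness, and this is where separability and completeness both enter. Fix $t \gg 0$. For each $n \geqslant 1$ choose, by separability, a countable cover of $E$ by open balls $B(x_m, 1/n)$, $m \geqslant 1$, and set $C_N^{(n)} = E \setminus \bigcup_{m \leqslant N} B(x_m, 1/n)$. The $C_N^{(n)}$ form a decreasing sequence of closed sets with empty intersection, so $\mathrsfs{F}$-smoothness gives $\bigwedge_N \nu(C_N^{(n)}) = 0$; since $t \gg 0$, there is $N_n$ with $t \geqslant \nu(C_{N_n}^{(n)})$. Put $K = \bigcap_n \overline{\bigcup_{m \leqslant N_n} B(x_m, 1/n)}$. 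Then $K$ is closed and totally bounded, hence compact by completeness, so $K \in \mathrsfs{K}$. Moreover $E \setminus K \subset \bigcup_n C_{N_n}^{(n)}$, so monotonicity and $\sigma$-maxitivity yield $\nu(E \setminus K) \leqslant \bigoplus_n \nu(C_{N_n}^{(n)}) \leqslant t$. As $t \gg 0$ was arbitrary and $\bigwedge \twoheaduparrow 0 = 0$ by continuity of $L$, we conclude $\bigwedge_{K \in \mathrsfs{K}} \nu(E \setminus K) = 0$, that is, $\nu$ is tight.

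The \emph{crux} is this tightness step: unlike weak outer-continuity, which needs only complete metrizability, producing a single compact $K$ with $\nu(E \setminus K)$ below an arbitrary $t \gg 0$ requires separability (to cover $E$ by countably many small balls) and completeness (so the resulting closed totally bounded set is compact), and it is precisely $\sigma$-maxitivity that lets one assemble the countably many tail estimates into a single bound. An alternative, less self-contained route would note that $\nu$, being weakly outer-continuous on the quasisober space $E$, is $\mathrsfs{Q}$-smooth saturated by Proposition~\ref{prop:k}, hence $\mathrsfs{Q}\mathrsfs{F}$-smooth saturated, and then invoke the completely metrizable converse of Proposition~\ref{prop:f} to obtain tightness.
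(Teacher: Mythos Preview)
Your proposal is correct and takes essentially the same approach as the paper: the paper first notes weak inner-continuity via the Lindel\"of property of open sets in a separable metrizable space, then invokes Proposition~\ref{prop:f} (whose completely-metrizable converse bundles together the $\mathrsfs{F}$-smooth $\Rightarrow$ weakly outer-continuous step and Puhalskii's tightness construction), whereas you unpack these same ingredients in a different order---weak outer-continuity first, then regularity via Corollary~\ref{coro:sc}, then the Prokhorov-type tightness argument written out. Your closing ``alternative route'' through Propositions~\ref{prop:k} and~\ref{prop:f} is in fact exactly the paper's proof.
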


\begin{proof}
Let $E$ be Polish, and let $\nu$ be an $L$-valued $\mathrsfs{F}$-smooth $\sigma$-maxitive measure on $\mathrsfs{B}$. 
Since $E$ is separable metrizable, every open subset is Lindel\"{o}f, hence the restriction of $\nu$ to $\mathrsfs{G}$ satisfies Equation~(\ref{eq:eqo}), i.e.\ $\nu$ is weakly inner-continuous. Now the result follows from Proposition~\ref{prop:f}. 
\end{proof}

\begin{remark}
For the case $L = \overline{\reels}_+$, one could prove Proposition~\ref{prop:trpolish} with the help of the Choquet capacitability theorem (see e.g.\ Molchanov \cite[Theorem~E.9]{Molchanov05} or Aliprantis and Border \cite[Theorem~12.40]{Aliprantis06}). 
\end{remark}

\begin{corollary}\label{coro:sigcomp}
Assume that $L$ is a domain. 
Then, on a $\sigma$-compact and metrizable space, every $L$-valued $\mathrsfs{K}$-smooth $\sigma$-maxitive measure is regular. 
\end{corollary}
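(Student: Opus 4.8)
The plan is to verify the two ingredients of regularity separately and then appeal to Proposition~\ref{lem:loccomp}, which asserts that a weakly inner-continuous and weakly outer-continuous maxitive measure is regular. A $\sigma$-compact metrizable space is in particular separable metrizable, and every open subset is $\sigma$-compact, being an $F_\sigma$ set sitting inside a $\sigma$-compact space. Hence, by $\sigma$-maxitivity, $\nu(G) = \bigoplus \{ \nu(K) : K \in \mathrsfs{K},\ K \subset G \}$ for every open $G$. Squeezing $\nu(K) \leqslant \nu^{+}(K) \leqslant \nu(G)$ for each compact $K \subset G$ upgrades this to $\nu(G) = \bigoplus_{K \subset G} \nu^{+}(K)$, i.e.\ weak inner-continuity; this is exactly the argument already used for Proposition~\ref{prop:trpolish}. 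The whole difficulty is therefore concentrated in weak outer-continuity.

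For weak outer-continuity I would first reduce to points. Since $E$ is metrizable, hence $T_1$, one has $[x] = \{x\}$ for every $x$, so by Lemma~\ref{lem:reg0} it suffices to prove $\nu(\{x\}) = \nu^{+}(\{x\})$ for each $x \in E$. As the inequality $\nu^{+}(\{x\}) \geqslant \nu(\{x\})$ is automatic by monotonicity, the task reduces to showing $\nu^{+}(\{x\}) \leqslant \nu(\{x\})$.

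The heart of the proof is a gluing construction that compensates for the possible lack of local compactness. Fix $x$ and suppose, for contradiction, that $\nu^{+}(\{x\}) \not\leqslant \nu(\{x\})$. Since $L$ is continuous and $\nu(\{x\}) = \bigwedge \twoheaduparrow \nu(\{x\})$, there is some $s \gg \nu(\{x\})$ with $s \not\geqslant \nu^{+}(\{x\})$. For each $n$ the ball $G_n = \{ y : d(x,y) < 1/n \}$ is an open neighbourhood of $x$, so $\nu^{+}(\{x\}) \leqslant \nu(G_n)$ and therefore $s \not\geqslant \nu(G_n)$. Using once more that $G_n$ is $\sigma$-compact, $\nu(G_n)$ is the supremum of the $\nu(K)$ over compact $K \subset G_n$, whence there is a compact set $K_n \subset G_n$ with $s \not\geqslant \nu(K_n)$. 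I then amalgamate these: set $H_N = \{x\} \cup \bigcup_{n \geqslant N} K_n$. Because $K_n \subset G_n$ forces every point of $K_n$ to lie within $1/n$ of $x$, each $H_N$ is compact, the family $(H_N)_N$ is decreasing, and $\bigcap_N H_N = \{x\}$. Now $\mathrsfs{K}$-smoothness gives $\bigwedge_N \nu(H_N) = \nu(\{x\})$, while $K_N \subset H_N$ yields $s \not\geqslant \nu(H_N)$ for every $N$. Since $s \gg \nu(\{x\}) = \bigwedge_N \nu(H_N)$ and the $\nu(H_N)$ form a filtered family, the way-above relation forces $s \geqslant \nu(H_{N_0})$ for some $N_0$, a contradiction. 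Hence $\nu$ is weakly outer-continuous, and regularity follows from Proposition~\ref{lem:loccomp}.

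The main obstacle is precisely that a $\sigma$-compact metrizable space need not be locally compact (think of $\mathbb{Q}$), so a point $x$ may admit no neighbourhood base of compact sets, and the clean approximation of $\{x\}$ by compact neighbourhoods exploited in Propositions~\ref{prop:k} and~\ref{prop:f} is simply unavailable. The device that replaces it is the observation that compact sets $K_n$ living inside the shrinking balls $G_n$ can be glued into the \emph{single} compact set $H_N$; the one genuinely delicate point to check is that this amalgam is compact, which hinges on the fact that all but finitely many of the $K_n$ are squeezed into an arbitrarily small ball around $x$, so that any sequence in $H_N$ either stays in a finite subunion or converges to $x$.
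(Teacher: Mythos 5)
Your argument is correct, and the outer-continuity half is genuinely different from the paper's. For weak inner-continuity the paper does not argue directly from $\sigma$-compactness of open sets as you do; instead it restricts $\nu$ to each compact piece $K_n$ of $E$, invokes Proposition~\ref{prop:trpolish} to get that each restriction is tight regular (hence completely maxitive), and glues these by $\sigma$-maxitivity to conclude that $\nu$ itself is completely maxitive, which gives weak inner-continuity via Lemma~\ref{lem:wic}. Your route --- every open set is $F_\sigma$ in a $\sigma$-compact metrizable space, hence $\sigma$-compact, so $\sigma$-maxitivity plus the squeeze $\nu(K)\leqslant\nu^{+}(K)\leqslant\nu(G)$ gives weak inner-continuity directly --- is more elementary and bypasses the Polish-space tightness machinery of Propositions~\ref{prop:f} and~\ref{prop:trpolish} entirely, at the cost of not producing complete maxitivity as an intermediate byproduct. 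For weak outer-continuity the paper also reduces to $\nu(\{x\})=\nu^{+}(\{x\})$ via Lemma~\ref{lem:reg0}, but then fixes $s\gg\nu(\{x\})$ and proves that the set $G=\{y: s\gg\nu(\{y\})\}$ is \emph{open}, by a sequential closedness argument: for $y_n\to y$ with $y_n\notin G$, the tails $Q_n=\overline{\{y_{n'}:n'\geqslant n\}}$ form a decreasing family of compacta with intersection $\{y\}$, and $\mathrsfs{K}$-smoothness yields a contradiction; complete maxitivity (already established) then gives $s\geqslant\nu(G)\geqslant\nu^{+}(\{x\})$. Your amalgam construction $H_N=\{x\}\cup\bigcup_{n\geqslant N}K_n$ applies $\mathrsfs{K}$-smoothness to a different decreasing family of compacta shrinking to $\{x\}$ and reaches the contradiction without ever needing complete maxitivity or the openness of an auxiliary set; the compactness of $H_N$ is correctly justified by the fact that all but finitely many $K_n$ lie in any given ball around $x$. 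Both proofs ultimately exploit the same two resources --- metrizability to manufacture countable decreasing families of compacta converging to a point, and $\mathrsfs{K}$-smoothness to pass the infimum through $\nu$ --- but they package them differently, and yours is the more self-contained of the two.
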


\begin{proof}
Let $E$ be $\sigma$-compact and metrizable, and let $\nu$ be an $L$-valued $\mathrsfs{K}$-smooth $\sigma$-maxitive measure. 
Since $E$ is $\sigma$-compact, there is a sequence $(K_n)_{n}$ of compact subsets such that $E = \bigcup_n K_n$. 
Each of these $K_n$ is then a Polish space because $E$ is metrizable. 
By Proposition~\ref{prop:trpolish}, this implies that the restriction $\nu_n$ of $\nu$ to the Borel $\sigma$-algebra of $K_n$ is (tight) regular, hence completely maxitive by Proposition~\ref{lem:loccomp}. 
As a consequence, if $B \in \mathrsfs{B}$, then $\nu(B) = \bigoplus_{n} \nu(B \cap K_n) = \bigoplus_{n} \nu_n(B \cap K_n) = \bigoplus_{n} \bigoplus_{x \in B \cap K_n} \nu_n(\{x\}) = \bigoplus_{n} \bigoplus_{x \in B \cap K_n} \nu(\{x\}) = \bigoplus_{x\in B} \nu(\{x\})$, so $\nu$ is completely maxitive. 

Since complete maxitivity implies weak inner-continuity by Lemma~\ref{lem:wic}, it suffices to prove that $\nu$ is weakly outer-continuous in order to conclude that $\nu$ is regular. By Lemma~\ref{lem:reg0}, we only need to show that $\nu(\{x\}) = \nu^{+}(\{x\})$ for all $x$. 
So let $s \gg \nu(\{x\})$. Then $G := \{ y \in E : s \gg \nu(\{y \}) \}$ contains $x$. We prove that $G$ is open, i.e.\ that $F = E \setminus G$ is closed. 
Let $(y_n)$ be a sequence in $F$ with $y_n \rightarrow y$. If $Q_n$ is the topological closure of $\{ y_{n'} : n' \geqslant n \}$, then $Q_n$ is compact, and $\bigcap_n Q_n = \{ y\}$ since $E$ is Hausdorff. 
Since $\nu$ is $\mathrsfs{Q}$-smooth (i.e.\ $\mathrsfs{K}$-smooth), this gives $\bigwedge_n \nu(Q_n) = \nu(\{y\})$. If $y \notin F$, then $s \gg \bigwedge_n \nu(Q_n)$, hence there is some $n_0$ such that $s \gg \nu(Q_{n_0})$. Therefore, $s \gg \nu(\{y_{n_0}\})$, i.e.\ $y_{n_0} \notin F$, a contradiction. Thus, $y \in F$. Since $E$ is metrizable, it is first-countable, so this proves that $F$ is closed. 
So $G$ is open, contains $x$, and $s \geqslant \nu(G)$ because $\nu$ is completely maxitive. We deduce that $s \geqslant \nu^{+}(\{x\})$ and, with the continuity of $L$, that $\nu(\{x\}) \geqslant \nu^{+}(\{x\})$. From Lemma~\ref{lem:reg0} we conclude that $\nu$ is weakly outer-continuous, hence regular. 
\end{proof}

\begin{remark}
Part of the preceding corollary was proved by Miranda et al.\ \cite[Proposition~2.6]{Miranda04} in the case where $L  = \overline{\reels}_+$. 
It also uses ideas from \cite[Lemma~1.7.4]{Puhalskii01}. 
\end{remark}

\subsection{Cardinal densities of maxitive measures}

In this section we prove new results giving equivalent conditions for a maxitive measure $\nu$ on $\mathrsfs{B}$ to have a \textit{cardinal density}, that is a map $c : E  \rightarrow L$ such that 
$$
\nu(B) = \bigoplus_{x \in B} c(x), 
$$
for all $B \in \mathrsfs{B}$. 
As a special case, consider e.g.\ a finite set $E$ with the discrete topology. Then $\nu$ admits a cardinal density defined by $c(x) = \nu(\{ x \})$, since $B = \bigcup_{x\in B} \{ x \}$, where the union runs over a finite set.  In the general case, this reasoning may fail, for we may have $\nu(\{ x \}) = 0$ for all $x \in E$, even with a nonzero $\nu$, but it is tempting to consider $c^{+}(x) := \nu^{+}([x])$ instead, where $\nu^{+}$ is defined in Example~\ref{ex:nuplus}.  
This idea, which appeared in \cite{Heckmann98b, Heckmann98} and \cite{Akian99}, is effective and leads to Theorem~\ref{thm:reg0}. 

A map $c : E \rightarrow L$ is \textit{upper-semicontinuous} (or \textit{usc} for short) if, for all $t \in L$, the subset $\{ t \gg c \}$ is open. We refer the reader to Penot and Th\'era \cite{Penot82}, Beer \cite{Beer87}, van Gool \cite{vanGool92}, Gerritse \cite{Gerritse97}, Akian and Singer \cite{Akian03} for a wide treatment of upper-semicontinuity of poset-valued and domain-valued maps. 
Note that, if $L$ is a filtered-complete poset and $\nu$ is an $L$-valued maxitive map on $\mathrsfs{B}$, then the map $c^{+}$ defined by $c^{+}(x) = \nu^{+}([x])$ is usc. 

A map $c : E \rightarrow L$ is \textit{upper compact} if, for every $t \gg 0$, $\{ t \not\gg c \}$ is a compact subset of $E$. 

\begin{proposition}\label{prop:tensioneq}
Assume that $L$ is a domain, 
and let $\nu$ be an $L$-valued maxitive measure on $\mathrsfs{B}$. 
If $\nu$ is tight and outer-continuous, then $c^{+} : x \mapsto \nu^{+}([x])$ 
is upper compact. 
Conversely, if $\nu$ is weakly inner-continuous and $c^{+}$ is upper compact, then $\nu$ is tight. 
\end{proposition}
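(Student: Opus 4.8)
The plan is to treat the two implications separately, using throughout the upper-semicontinuity of $c^{+}$ (noted just before the statement), so that $\{t \gg c^{+}\}$ is open and $\{t \not\gg c^{+}\}$ is closed for every $t \in L$, together with the interpolation property of the domain $L$.

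For the first implication, assume $\nu$ is tight and outer-continuous and fix $t \gg 0$. First I would observe that $\{\nu(E \setminus K) : K \in \mathrsfs{K}\}$ is filtered, since $\mathrsfs{K}$ is stable under finite unions and $\nu$ is monotone, and that its infimum is $0$ by tightness. Using interpolation, choose $t'$ with $t \gg t' \gg 0$; since $t' \gg 0 = \bigwedge_{K} \nu(E \setminus K)$, the definition of the way-above relation applied to this filtered family yields some $K_0 \in \mathrsfs{K}$ with $\nu(E \setminus K_0) \leqslant t'$. Now if $x$ satisfies $[x] \cap K_0 = \emptyset$, then $[x] \subset E \setminus K_0$ and outer-continuity gives $c^{+}(x) = \nu^{+}([x]) = \nu([x]) \leqslant \nu(E \setminus K_0) \leqslant t'$, so $t \gg c^{+}(x)$. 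Contrapositively, every $x \in \{t \not\gg c^{+}\}$ meets $K_0$ through its class, hence $x \in \uparrow\!\! K_0$. Thus $\{t \not\gg c^{+}\}$ is a closed subset of the compact saturated set $\uparrow\!\! K_0$, hence compact, which is upper compactness.

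For the converse, assume $\nu$ is weakly inner-continuous and $c^{+}$ is upper compact. Since $0 = \bigwedge \twoheaduparrow 0$ by continuity of $L$, it suffices to produce, for each $t \gg 0$, some $K \in \mathrsfs{K}$ with $\nu(E \setminus K) \leqslant t$; taking the infimum over all such $t$ then forces $\bigwedge_{K} \nu(E \setminus K) = 0$. So fix $t \gg 0$ and set $K = \{t \not\gg c^{+}\}$, which is compact by upper compactness and closed by semicontinuity, hence lies in $\mathrsfs{K}$. Its complement $G = \{t \gg c^{+}\}$ is open, and for each $x \in G$ I would use interpolation to pick $s_x$ with $t \gg s_x \gg \nu^{+}([x])$, then unfold $\nu^{+}([x]) = \bigwedge_{G' \supset [x]} \nu(G')$ to extract an open $G_x \supset [x]$ with $\nu(G_x) \leqslant s_x$. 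The point of the strict inequality is that any $y \in G_x$ satisfies $[y] \subset G_x$ (open sets are saturated), so $c^{+}(y) = \nu^{+}([y]) \leqslant \nu(G_x) \leqslant s_x$ with $t \gg s_x$, giving $t \gg c^{+}(y)$ and thus $G_x \subset G$. Hence $G = \bigcup_{x \in G} G_x$, and weak inner-continuity in the form of Equation~(\ref{eq:eqo}) from Lemma~\ref{lem:wic} yields $\nu(E \setminus K) = \nu(G) = \bigoplus_{x} \nu(G_x) \leqslant t$, since each $\nu(G_x) \leqslant t$.

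The main obstacle, in both directions, is the recurring gap between $\leqslant$ and $\gg$: a direct monotonicity estimate only delivers $c^{+}(x) \leqslant t'$ or $\nu(G_x) \leqslant t$, whereas the sets $\{t \gg c^{+}\}$ and $\{t \not\gg c^{+}\}$ are defined by the strict way-above relation. The device that closes this gap each time is the interpolation property: inserting an intermediate element ($t \gg t' \gg 0$ in the first part, $t \gg s_x \gg \nu^{+}([x])$ in the second) upgrades the non-strict bounds to genuine way-above relations. A secondary point to handle carefully is the passage between a point $x$ and its class $[x]$ in the non-$T_0$ setting: one must check $[x] \cap K_0 \neq \emptyset \Rightarrow x \in \uparrow\!\! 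K_0$ in the first part and $y \in G_x \Rightarrow [y] \subset G_x$ in the second, both of which follow from saturation.
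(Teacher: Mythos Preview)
Your proposal is correct and, for the first implication, essentially matches the paper's argument; the only difference is that the paper concludes $\{t \not\gg c^{+}\} \subset K_0$ directly rather than passing through $\uparrow\!\! K_0$, since Corollary~\ref{lem:tilde} applied to the Borel set $E \setminus K_0$ already gives that $x \notin K_0$ implies $[x] \subset E \setminus K_0$, making your detour via the saturation unnecessary (though harmless).

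For the converse, the paper takes a more direct route. Rather than building an explicit open cover $\{G_x\}_{x \in G}$ and applying Equation~(\ref{eq:eqo}), it combines the definition of weak inner-continuity with Lemma~\ref{lem:reg0} to compute, for $G_t = \{t \gg c^{+}\}$,
\[
\nu(G_t) = \bigoplus_{K \in \mathrsfs{K},\, K \subset G_t} \nu^{+}(K) = \bigoplus_{K \subset G_t} \bigoplus_{x \in K} c^{+}(x) = \bigoplus_{x \in G_t} c^{+}(x) \leqslant t,
\]
and then takes the infimum over $t \gg 0$. Your cover argument is a legitimate alternative that avoids invoking Lemma~\ref{lem:reg0}; it essentially re-proves the relevant part of that lemma by hand. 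The trade-off is that the paper's version is shorter, while yours is more self-contained and makes the role of the interpolation property more visible.
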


\begin{proof}
Assume that $\nu$ is tight and outer-continuous, and let $t \gg 0$. Since $\{ \nu(E \setminus K) : K \in \mathrsfs{K} \}$ is filtered with an infimum equal to $0$, the interpolation property implies that there is some $K \in \mathrsfs{K}$ such that $t \gg \nu(E \setminus K)$. Since $\nu$ is outer-continuous, we obtain $t \gg \bigoplus_{x \notin K} c^{+}(x)$. This shows that $\{ t \not\gg c^{+} \}$ is a subset of $K$. Since $c^{+}$ is usc, $\{ t \not\gg c^{+} \}$ is also closed, hence compact. 

Conversely, assume that $\nu$ is weakly inner-continuous and that $c^{+}$ is upper compact. Let $K_t$ denote the compact closed subset $\{ t \not\gg c^{+} \}$. Then 
$$
\bigwedge_{K \in\mathrsfs{K}} \nu(E \setminus K) \leqslant \bigwedge_{t \gg 0} \nu(E \setminus K_t). 
$$ 
Since $\nu$ is weakly inner-continuous and $G_t =E \setminus K_t$ is open for all $t \gg 0$, we have by Lemma~\ref{lem:reg0}
$$
\nu(G_t) = \bigoplus_{K \in \mathrsfs{K}, K \subset G_t} \nu^{+}(K) = \bigoplus_{K \in \mathrsfs{K}, K \subset G_t} \bigoplus_{x \in K} c^{+}(x), 
$$
thus $\nu(G_t) = \bigoplus_{x \in G_t} c^{+}(x)$, 
so that 
$$
\bigwedge_{K \in\mathrsfs{K}} \nu(E \setminus K) \leqslant \bigwedge_{t \gg 0} \bigoplus_{x \in E, t \gg c^{+}(x)} c^{+}(x) \leqslant \bigwedge_{t \gg 0} t = 0, 
$$ 
so $\nu$ is tight. 
\end{proof}

The following theorem summarizes many of the above results and highlights the relation between the existence of a density, regularity, and complete maxitivity. 
Part of it is due to \cite[Proposition~3.15]{Akian99} and \cite[Theorem~3.1]{Poncet10}. 
See also Norberg \cite{Norberg86}, Vervaat \cite{Vervaat97}. 
We also refer the reader to O'Brien and Watson \cite[Claim~2]{OBrien98} and Miranda et al.\ \cite[Proposition~2.3, Theorem~2.4]{Miranda04} for the case $L = \overline{\reels}_+$ and the link with the Choquet capacitability theorem. 

\begin{theorem}\label{thm:reg0}
Assume that $L$ is a domain and $E$ is a quasisober space. 
Let $\nu$ be an $L$-valued maxitive measure on $\mathrsfs{B}$. 
Then $\nu$ has a cardinal density if and only if $\nu$ is completely maxitive. 
Also, consider the following assertions: 
\begin{enumerate}
	\item\label{reg6} $\nu$ is regular, 
	\item\label{reg5} $\nu$ has a usc cardinal density, 
	\item\label{reg6bis} $\nu$ is outer-continuous and completely maxitive, 
	\item\label{reg6ter} $\nu$ is weakly outer-continuous and weakly inner-continuous, 
	\item\label{reg6qui} $\nu$ is weakly outer-continuous and $\sigma$-maxitive, 
	\item\label{reg6qui2} $\nu$ is weakly outer-continuous, 
	\item\label{reg6six} $\nu$ is $\mathrsfs{Q}$-smooth and saturated, 
	\item\label{reg6sep} $\nu$ is $\mathrsfs{Q}$-smooth, weakly inner-continuous, and saturated, 
	\item\label{reg6eig} $\nu$ is $\mathrsfs{Q}$-smooth, $\sigma$-maxitive, and saturated. 
\end{enumerate} 
Then (\ref{reg6}) $\Leftrightarrow$ (\ref{reg5}) $\Leftrightarrow$ (\ref{reg6bis}) $\Leftrightarrow$ (\ref{reg6ter}) $\Rightarrow$ (\ref{reg6qui}) $\Rightarrow$ (\ref{reg6qui2}) $\Rightarrow$ (\ref{reg6six}) $\Leftarrow$ (\ref{reg6sep}). 
Moreover, 
\begin{itemize}
	\item if $E$ is second-countable, then (\ref{reg6}) $\Leftrightarrow$ (\ref{reg5}) $\Leftrightarrow$ (\ref{reg6bis}) $\Leftrightarrow$ (\ref{reg6ter}) $\Leftrightarrow$ (\ref{reg6qui}); 
	\item if $E$ is locally compact, then (\ref{reg6sep}) $\Leftrightarrow$ (\ref{reg6}) $\Leftrightarrow$ (\ref{reg5}) $\Leftrightarrow$ (\ref{reg6bis}) $\Leftrightarrow$ (\ref{reg6ter}) and (\ref{reg6qui2}) $\Leftrightarrow$ (\ref{reg6six}); 
	\item if $E$ is $\sigma$-compact and metrizable, then (\ref{reg6eig}) $\Leftrightarrow$ (\ref{reg6}) $\Leftrightarrow$ (\ref{reg5}) $\Leftrightarrow$ (\ref{reg6bis}) $\Leftrightarrow$ (\ref{reg6ter}) $\Leftrightarrow$ (\ref{reg6qui}); 
	\item if $E$ is locally compact Polish, then (\ref{reg6sep}) $\Leftrightarrow$ (\ref{reg6eig}) $\Leftrightarrow$ (\ref{reg6}) $\Leftrightarrow$ (\ref{reg5}) $\Leftrightarrow$ (\ref{reg6bis}) $\Leftrightarrow$ (\ref{reg6ter}) $\Leftrightarrow$ (\ref{reg6qui})  and (\ref{reg6qui2}) $\Leftrightarrow$ (\ref{reg6six}). 
\end{itemize}
\end{theorem}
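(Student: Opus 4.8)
The plan is to treat this theorem as an assembly of the earlier results, organized around the four-way equivalence (\ref{reg6})--(\ref{reg6ter}) as a hub, with the downward implications and the conditional refinements branching off it. Before touching the numbered assertions I would settle the stand-alone claim that $\nu$ has a cardinal density if and only if it is completely maxitive. The forward direction is a routine regrouping: if $\nu(B) = \bigoplus_{x \in B} c(x)$ and $\bigcup_{j} B_j = B \in \mathrsfs{B}$, one reorganizes the supremum over $B$ along the $B_j$. For the converse I would take $c(x) = \nu([x])$ and invoke Corollary~\ref{lem:tilde} to write $B = \bigcup_{x \in B} [x]$ as a union of compact Borel sets, so that complete maxitivity gives $\nu(B) = \bigoplus_{x \in B} \nu([x])$.

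For the hub, the equivalence (\ref{reg6}) $\Leftrightarrow$ (\ref{reg6ter}) follows from Lemma~\ref{lem:locconv} together with Proposition~\ref{lem:loccomp}, whose proof in fact establishes full outer-continuity and complete maxitivity, thereby also yielding (\ref{reg6ter}) $\Rightarrow$ (\ref{reg6bis}); conversely (\ref{reg6bis}) $\Rightarrow$ (\ref{reg6ter}) because outer-continuity weakens to weak outer-continuity and complete maxitivity produces Equation~(\ref{eq:eqo}), hence weak inner-continuity by Lemma~\ref{lem:wic}. For (\ref{reg6}) $\Rightarrow$ (\ref{reg5}) I would observe that a regular measure is weakly outer-continuous, so $\nu([x]) = \nu^{+}([x])$ by Lemma~\ref{lem:reg0}, and that the resulting cardinal density $c = c^{+}$ is usc by the remark preceding the statement. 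The only delicate arrow in the hub is the converse (\ref{reg5}) $\Rightarrow$ (\ref{reg6}), discussed below.

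The downward implications then radiate from (\ref{reg6ter}): Proposition~\ref{lem:loccomp} gives complete maxitivity and hence $\sigma$-maxitivity, so (\ref{reg6ter}) $\Rightarrow$ (\ref{reg6qui}), while (\ref{reg6qui}) $\Rightarrow$ (\ref{reg6qui2}) is trivial; (\ref{reg6qui2}) $\Rightarrow$ (\ref{reg6six}) is the first half of Proposition~\ref{prop:k} on a quasisober space, and (\ref{reg6sep}) $\Rightarrow$ (\ref{reg6six}) holds by dropping weak inner-continuity. The conditional refinements plug in the remaining tools: second-countability supplies (\ref{reg6qui}) $\Rightarrow$ (\ref{reg6}) through Corollary~\ref{coro:sc}; local compactness supplies the converse (\ref{reg6six}) $\Rightarrow$ (\ref{reg6qui2}) of Proposition~\ref{prop:k}, whence (\ref{reg6sep}) $\Rightarrow$ (\ref{reg6ter}) and, reading backward, (\ref{reg6}) $\Rightarrow$ (\ref{reg6sep}) (using that regularity forces saturation and $\mathrsfs{Q}$-smoothness); and $\sigma$-compact metrizability supplies (\ref{reg6eig}) $\Rightarrow$ (\ref{reg6}) via Corollary~\ref{coro:sigcomp}, with the automatic second-countability of such spaces closing the loop back to (\ref{reg6qui}). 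The locally compact Polish case is then just the conjunction of the locally compact and the $\sigma$-compact metrizable cases, since a locally compact Polish space is second-countable, metrizable, and $\sigma$-compact.

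The main obstacle I anticipate is the genuinely new implication (\ref{reg5}) $\Rightarrow$ (\ref{reg6}): a usc cardinal density must force regularity. Here I would establish weak outer-continuity via Lemma~\ref{lem:reg0} by proving $\nu([x]) \geqslant \nu^{+}([x])$. Applying the Remark of Section~\ref{sec:domain}, take $s \gg \nu([x])$; since $c(y) \leqslant \nu([x])$ for every $y \in [x]$, the way-above relation gives $s \gg c(y)$, so $G = \{\, s \gg c \,\}$ is, by upper-semicontinuity, an open neighbourhood of $[x]$ on which $s \geqslant c$, whence $s \geqslant \bigoplus_{y \in G} c(y) = \nu(G) \geqslant \nu^{+}([x])$. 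Continuity of $L$ then yields $\nu([x]) \geqslant \nu^{+}([x])$, and the reverse inequality is automatic, so $\nu$ is weakly outer-continuous; combined with the weak inner-continuity coming from complete maxitivity (via the cardinal-density equivalence and Lemma~\ref{lem:wic}), Proposition~\ref{lem:loccomp} delivers regularity. The care needed is precisely in the openness step and in checking that the way-above relation interacts correctly with the pointwise suprema defining $\nu(G)$.
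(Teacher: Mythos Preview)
Your proposal is correct and follows essentially the same route as the paper: the assembly of earlier lemmas is identical, and the one nontrivial step (\ref{reg5}) $\Rightarrow$ (\ref{reg6}) uses the same device of the open set $\{s \gg c\}$ followed by Proposition~\ref{lem:loccomp}. The only organizational difference is that the paper proves $\nu(K) = \nu^{+}(K)$ directly for all $K \in \mathrsfs{K}$ in one pass, whereas you specialize to $K = [x]$ and then invoke the equivalence in Lemma~\ref{lem:reg0}; likewise for (\ref{reg6}) $\Rightarrow$ (\ref{reg5}) the paper computes $\nu(B) = \bigoplus_{x \in B} c^{+}(x)$ straight from inner-continuity and Lemma~\ref{lem:reg0}, while you route through complete maxitivity first---but these are equivalent reorderings, not different arguments.
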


\begin{proof}
If $\nu$ is completely maxitive, then $\nu(B) = \nu(\bigcup_{x \in B} [x]) = \bigoplus_{x \in B} c(x)$ by Lemma~\ref{lem:tilde}, where $c(x) = \nu([x])$, hence $\nu$ has a cardinal density. The reverse assertion is straightforward. 

(\ref{reg6}) $\Rightarrow$ (\ref{reg5}) Assume that $\nu$ is regular. 
Then $\nu(\uparrow\!\! K) = \bigoplus_{x \in K} c^{+}(x)$ for all $K \in \mathrsfs{K}$ by Lemma~\ref{lem:reg0}, where $c^{+}(x) = \nu^{+}([x])$. 
By inner-continuity of $\nu$, $\nu(B) = \bigoplus_{K \in \mathrsfs{K}, K \subset B} \nu(\uparrow\!\! K) = \bigoplus_{K \in \mathrsfs{K}, K \subset B} \bigoplus_{x \in K} c^{+}(x) = \bigoplus_{x \in B} c^{+}(x)$, for all Borel subsets $B$, i.e.\ $\nu$ has a usc cardinal density. 

(\ref{reg5}) $\Rightarrow$ (\ref{reg6}) Assume that $\nu$ has a usc cardinal density $c$. 
Then $\nu$ is weakly inner-continuous. 
Let us show that, if $K \in \mathrsfs{K}$, then $\nu(K) = \nu^{+}(K)$. So let $u \gg \nu(K)$. Since $\nu(K) = \bigoplus_{x \in K} c(x)$, we have $K \subset G$ where $G = \{ u \gg c \}$ is open. Moreover, $\nu(G) = \bigoplus_{x \in G} c(x) \leqslant u$. Therefore, $\nu(K) = \nu^{+}(K)$ by continuity of $L$. This implies that $\nu$ is regular by Proposition~\ref{lem:loccomp}. 

So now the implications (\ref{reg5}) $\Rightarrow$ (\ref{reg6bis}) $\Rightarrow$ (\ref{reg6ter}) $\Rightarrow$ (\ref{reg6}) are clear (use Lemma~\ref{lem:wic} and Proposition~\ref{lem:loccomp}). 
Using Proposition~\ref{prop:k}, it is also straightforward that (\ref{reg6bis}) $\Rightarrow$ (\ref{reg6qui}) $\Rightarrow$ (\ref{reg6qui2}) $\Rightarrow$ (\ref{reg6six}) $\Leftarrow$ (\ref{reg6sep}). 

If $E$ is second-countable, use Corollary~\ref{coro:sc}. 
If $E$ is locally compact, use Proposition~\ref{prop:k}. 
If $E$ is $\sigma$-compact and metrizable, use Corollary~\ref{coro:sigcomp}. 
If $E$ is locally compact Polish, use Proposition~\ref{prop:k} and Corollary~\ref{coro:sigcomp}. 
\end{proof}

\begin{corollary}
Assume that $L$ is a domain and $E$ is a quasisober space. 
If $\nu$ is a regular maxitive measure on $E$, then $c^{+}(x) = \nu([x])$ for all $x \in E$, and $c^{+}$ is the maximal (usc) cardinal density of $\nu$. 
\end{corollary}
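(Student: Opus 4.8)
The statement splits into two assertions: first the pointwise identity $c^{+}(x) = \nu([x])$, and second that $c^{+} : x \mapsto \nu^{+}([x])$ is the greatest cardinal density of $\nu$ (and happens to be usc). The plan is to settle the identity directly from the weak outer-continuity machinery already developed, then to establish maximality by a short comparison argument against an arbitrary density.

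For the identity, I would argue that a regular maxitive measure is in particular outer-continuous, hence weakly outer-continuous by Lemma~\ref{lem:locconv}. By Lemma~\ref{lem:reg0}, weak outer-continuity is precisely the statement that $\nu([x]) = \nu^{+}([x])$ for every $x \in E$. Since $c^{+}(x) = \nu^{+}([x])$ by definition, this yields $c^{+}(x) = \nu([x])$ immediately.

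For the second assertion, I would first recall that $c^{+}$ is indeed a usc cardinal density: the implication (\ref{reg6}) $\Rightarrow$ (\ref{reg5}) in the proof of Theorem~\ref{thm:reg0} shows that regularity of $\nu$ forces $\nu(B) = \bigoplus_{x \in B} c^{+}(x)$ for all $B \in \mathrsfs{B}$, while the remark preceding Proposition~\ref{prop:tensioneq} gives upper-semicontinuity of $c^{+}$. It then remains to prove maximality, i.e.\ that $c \leqslant c^{+}$ pointwise for every cardinal density $c$ of $\nu$. The key preliminary observation is that, since open subsets are saturated, an open set $G$ contains $[x]$ if and only if it contains $x$: indeed $x \in [x]$ always, while $x \in G$ forces $\uparrow\!\! x \subset G$ and hence $[x] \subset G$. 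Therefore $\nu^{+}([x]) = \bigwedge_{G \in \mathrsfs{G}, G \ni x} \nu(G)$, the infimum being legitimate and filtered by Example~\ref{ex:nuplus}. Now, fixing any cardinal density $c$, for each open $G \ni x$ we have $\nu(G) = \bigoplus_{y \in G} c(y) \geqslant c(x)$, so $c(x)$ is a lower bound of $\{ \nu(G) : G \ni x \}$ and thus $c(x) \leqslant \nu^{+}([x]) = c^{+}(x)$. This gives $c \leqslant c^{+}$, establishing maximality.

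I expect no serious obstacle here; both halves reduce to invoking earlier results. The only point requiring a little care is the identification of the open neighbourhoods of $[x]$ with those of $x$, which rests on the saturation of open sets and the definition of $[x]$; once that identification is in hand, the rewriting of $\nu^{+}([x])$ and the comparison $\nu(G) \geqslant c(x)$ both follow at once.
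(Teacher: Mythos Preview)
Your proof is correct. The paper states this corollary without proof, treating it as an immediate consequence of Theorem~\ref{thm:reg0} and the surrounding lemmas; your argument makes explicit precisely the steps one would fill in.

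One minor remark: your maximality argument, while correct, takes a slight detour through the open-neighbourhood description of $\nu^{+}([x])$. Once you have established the first part, namely $c^{+}(x) = \nu([x])$, maximality follows in one line: for any cardinal density $c$ of $\nu$ and any $x \in E$, since $x \in [x]$ we have $c(x) \leqslant \bigoplus_{y \in [x]} c(y) = \nu([x]) = c^{+}(x)$. This avoids the (correct but unnecessary) identification of open neighbourhoods of $[x]$ with those of $x$.
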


\begin{theorem}\label{thm:reg0tight}
Assume that $L$ is a domain and $E$ is a quasisober space. 
Let $\nu$ be an $L$-valued maxitive measure on $\mathrsfs{B}$. 
Also, consider the following assertions: 
\begin{enumerate}
	\item\label{reg6t} $\nu$ is tight regular, 
	\item\label{reg5t} $\nu$ has an upper compact usc cardinal density, 
	\item\label{reg6qui2t} $\nu$ is tight weakly outer-continuous, 
	\item\label{reg6sixt} $\nu$ is $\mathrsfs{Q} \mathrsfs{F}$-smooth and saturated, 
	\item\label{reg6sept} $\nu$ is $\mathrsfs{Q} \mathrsfs{F}$-smooth, weakly inner-continuous, and saturated, 
	\item\label{reg6eigt} $\nu$ is $\mathrsfs{Q} \mathrsfs{F}$-smooth, $\sigma$-maxitive, and saturated, 
\end{enumerate} 
Then (\ref{reg6t}) $\Leftrightarrow$ (\ref{reg5t}) $\Rightarrow$ (\ref{reg6qui2t}) $\Rightarrow$ (\ref{reg6sixt}) $\Leftarrow$ (\ref{reg6sept}). 
Moreover, 
\begin{itemize}
	\item if $E$ is locally compact, then (\ref{reg6sept}) $\Leftrightarrow$ (\ref{reg6t}) $\Leftrightarrow$ (\ref{reg5t}) and (\ref{reg6qui2t}) $\Leftrightarrow$ (\ref{reg6sixt});  
	\item if $E$ is completely metrizable, then (\ref{reg6qui2t}) $\Leftrightarrow$ (\ref{reg6sixt}); 
	\item if $E$ is Polish, then (\ref{reg6eigt}) $\Leftrightarrow$ (\ref{reg6t}) $\Leftrightarrow$ (\ref{reg5t}); 
	\item if $E$ is locally compact Polish, then (\ref{reg6sept}) $\Leftrightarrow$ (\ref{reg6eigt}) $\Leftrightarrow$ (\ref{reg6t}) $\Leftrightarrow$ (\ref{reg5t}) and (\ref{reg6qui2t}) $\Leftrightarrow$ (\ref{reg6sixt}). 
\end{itemize}
\end{theorem}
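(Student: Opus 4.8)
The plan is to mirror the proof of Theorem~\ref{thm:reg0}, superimposing tightness everywhere and upgrading $\mathrsfs{Q}$-smoothness to $\mathrsfs{Q}\mathrsfs{F}$-smoothness; the only genuinely new ingredients are Proposition~\ref{prop:tensioneq} (which couples tightness with upper-compactness of the density) and Proposition~\ref{prop:f} (the $\mathrsfs{Q}\mathrsfs{F}$-smooth characterization of tight weak outer-continuity). All the remaining spaces appearing in the conditional clauses (completely metrizable, Polish, locally compact on top of the standing quasisober assumption) satisfy the hypotheses of these propositions, so the work is to assemble the implications in the right order.

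First I would establish the unconditional chain (\ref{reg6t})~$\Leftrightarrow$~(\ref{reg5t})~$\Rightarrow$~(\ref{reg6qui2t})~$\Rightarrow$~(\ref{reg6sixt})~$\Leftarrow$~(\ref{reg6sept}). For (\ref{reg6t})~$\Rightarrow$~(\ref{reg5t}): a tight regular $\nu$ has the usc cardinal density $c^{+} : x \mapsto \nu^{+}([x])$ by Theorem~\ref{thm:reg0}, and since regularity entails outer-continuity, Proposition~\ref{prop:tensioneq} makes $c^{+}$ upper compact. For (\ref{reg5t})~$\Rightarrow$~(\ref{reg6t}): a usc cardinal density forces regularity via Theorem~\ref{thm:reg0}, and tightness comes from the estimate $\bigwedge_{K \in \mathrsfs{K}} \nu(E \setminus K) \leqslant \bigwedge_{t \gg 0} \nu(G_t) \leqslant \bigwedge_{t \gg 0} t = 0$, where $G_t := \{ t \gg c \}$ is open, $E \setminus G_t = \{ t \not\gg c \}$ is compact, and $\nu(G_t) = \bigoplus_{x \in G_t} c(x) \leqslant t$. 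The implication (\ref{reg6t})~$\Rightarrow$~(\ref{reg6qui2t}) is immediate from Lemma~\ref{lem:locconv}, (\ref{reg6qui2t})~$\Rightarrow$~(\ref{reg6sixt}) is the first half of Proposition~\ref{prop:f}, and (\ref{reg6sept})~$\Rightarrow$~(\ref{reg6sixt}) is trivial.

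For the conditional refinements I would invoke the converse halves. When $E$ is locally compact (quasisober), the converse of Proposition~\ref{prop:f} yields (\ref{reg6sixt})~$\Rightarrow$~(\ref{reg6qui2t}), giving (\ref{reg6qui2t})~$\Leftrightarrow$~(\ref{reg6sixt}); adding weak inner-continuity, (\ref{reg6sept}) produces a tight measure that is both weakly outer- and weakly inner-continuous, hence regular by Proposition~\ref{lem:loccomp}, so (\ref{reg6sept})~$\Rightarrow$~(\ref{reg6t}), while (\ref{reg6t})~$\Rightarrow$~(\ref{reg6sept}) combines Proposition~\ref{prop:f} with the fact that regularity supplies weak inner-continuity. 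When $E$ is completely metrizable, the completely-metrizable clause of Proposition~\ref{prop:f} again delivers (\ref{reg6sixt})~$\Rightarrow$~(\ref{reg6qui2t}). When $E$ is Polish, (\ref{reg6eigt})~$\Rightarrow$~(\ref{reg6t}) follows from Proposition~\ref{prop:trpolish} (an $\mathrsfs{F}$-smooth $\sigma$-maxitive measure on a Polish space is tight regular) and (\ref{reg6t})~$\Rightarrow$~(\ref{reg6eigt}) from Proposition~\ref{prop:f} together with regularity $\Rightarrow$ complete, hence $\sigma$-, maxitivity (Proposition~\ref{lem:loccomp}). The locally-compact-Polish case is then just the conjunction of the locally compact and Polish cases.

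The only delicate point, and the one I expect to demand care, is the tightness half of (\ref{reg5t})~$\Rightarrow$~(\ref{reg6t}): the hypothesis supplies an \emph{arbitrary} upper compact usc density $c$, not the canonical $c^{+}$, and upper-compactness does not transfer along $c \leqslant c^{+}$, since $\{ t \not\gg c^{+} \} \supseteq \{ t \not\gg c \}$ can only grow and need not stay compact. The resolution is to run the tightness estimate directly with $c$, as indicated above, rather than to reduce to $c^{+}$; everything else is bookkeeping that reassembles the already-established propositions.
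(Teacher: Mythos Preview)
Your proposal is correct and follows essentially the same route as the paper: the equivalence (\ref{reg6t})~$\Leftrightarrow$~(\ref{reg5t}) via Proposition~\ref{prop:tensioneq} (combined with Theorem~\ref{thm:reg0}), the chain (\ref{reg6t})~$\Rightarrow$~(\ref{reg6qui2t})~$\Rightarrow$~(\ref{reg6sixt})~$\Leftarrow$~(\ref{reg6sept}) via Proposition~\ref{prop:f}, and the conditional clauses via the converse halves of Proposition~\ref{prop:f} and Proposition~\ref{prop:trpolish}. Your explicit tightness argument in (\ref{reg5t})~$\Rightarrow$~(\ref{reg6t}) using the given density $c$ directly (rather than passing to $c^{+}$) is a welcome clarification, since Proposition~\ref{prop:tensioneq} is stated only for $c^{+}$ and the paper's one-line citation leaves this reduction implicit; your observation that $\{t \not\gg c^{+}\}$ need not inherit compactness from $\{t \not\gg c\}$ is exactly why the direct route is cleaner.
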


\begin{proof}
The equivalence (\ref{reg6t}) $\Leftrightarrow$ (\ref{reg5t}) is a consequence of Proposition~\ref{prop:tensioneq}. 
For the implications (\ref{reg6t}) $\Rightarrow$ (\ref{reg6qui2t}) $\Rightarrow$ (\ref{reg6sixt}) $\Leftarrow$ (\ref{reg6sept}), use Proposition~\ref{prop:f}. 

If $E$ is completely metrizable or locally compact, use Proposition~\ref{prop:f}. 
If $E$ is Polish, use Proposition~\ref{prop:trpolish}. 
\end{proof}

\section{Regularity of optimal measures on metrizable spaces}\label{sec:metriz} 

Let $E$ be a topological space with Borel $\sigma$-algebra $\mathrsfs{B}$. 
An $L$-valued maxitive measure $\nu$ on $\mathrsfs{B}$ is \textit{continuous from above} 
if $\nu(B) = \bigwedge_n \nu(B_n)$, for all $B_1 \supset B_2 \supset \ldots \in \mathrsfs{B}$ such that $B = \bigcap_n B_n$. %, and \textit{continuous from below} if $\nu(B) = \bigoplus_n \nu(B_n)$, for all $B_1 \subset B_2 \subset \ldots \in \mathrsfs{B}$ such that $B = \bigcup_n B_n$. 
An \textit{optimal measure} is a continuous from above $\sigma$-maxitive measure. % that is both  and continuous from below. 

The following result generalizes the Murofushi--Sugeno--Agebko theorem (see \cite[Proposition~I-8.2]{Poncet11}). 

\begin{proposition}
Assume that $L$ is a domain. 
An $L$-valued maxitive measure on $\mathrsfs{B}$ is an optimal measure if and only if it is a continuous from above. 
\end{proposition}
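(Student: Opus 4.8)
The plan is to observe that the forward implication is immediate: by definition an optimal measure is in particular continuous from above, so nothing is to be proved there. The entire content lies in the converse, namely that a continuous from above maxitive measure $\nu$ is automatically $\sigma$-maxitive; combined with the standing hypothesis this gives optimality. So I would fix a countable family $\{B_j\}_{j \in J}$ of Borel sets whose union $B = \bigcup_{j} B_j$ lies in $\mathrsfs{B}$, enumerate $J$ as $\{1, 2, \ldots\}$ (the finite case being mere finite maxitivity), and aim to show that $\nu(B)$ is the least upper-bound of $\{\nu(B_j) : j \in J\}$.

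Next I would set up the two monotone sequences that make the decreasing-sequence hypothesis usable. Writing $C_n = B_1 \cup \cdots \cup B_n$, finite maxitivity gives $\nu(C_n) = \nu(B_1) \oplus \cdots \oplus \nu(B_n)$ and, for each $n$, the two-set relation $\nu(B) = \nu(C_n) \oplus \nu(B \setminus C_n)$. The sets $B \setminus C_n$ form a decreasing sequence of Borel sets with $\bigcap_n (B \setminus C_n) = B \setminus B = \emptyset$, so continuity from above yields $\bigwedge_n \nu(B \setminus C_n) = \nu(\emptyset) = 0$. Monotonicity (a consequence of finite maxitivity) already shows $\nu(B)$ is an upper-bound of the $\nu(B_j)$, so it remains only to prove it is the least one.

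To do this I would invoke the way-above machinery via the Remark of Section~\ref{sec:domain}: to prove $u \geqslant \nu(B)$ for an arbitrary upper-bound $u$ of $\{\nu(B_j)\}$, it suffices to show $s \geqslant \nu(B)$ whenever $s \gg u$. Fix such an $s$. The family $\{\nu(B \setminus C_n)\}_n$ is filtered and its upward closure is a filter $F$ with $\bigwedge F = 0$; since $u \geqslant 0 = \bigwedge F$, the very definition of $\gg$ forces $s \in F$, i.e.\ $s \geqslant \nu(B \setminus C_{n_0})$ for some $n_0$. On the other hand $u$ dominates the finitely many $\nu(B_1), \ldots, \nu(B_{n_0})$, hence $u \geqslant \nu(C_{n_0})$, and so $s \geqslant u \geqslant \nu(C_{n_0})$. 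Therefore $s$ dominates both $\nu(C_{n_0})$ and $\nu(B \setminus C_{n_0})$, whence $s \geqslant \nu(C_{n_0}) \oplus \nu(B \setminus C_{n_0}) = \nu(B)$. The Remark then gives $u \geqslant \nu(B)$, so $\nu(B) = \bigoplus_{j} \nu(B_j)$ and $\nu$ is $\sigma$-maxitive.

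The step I expect to require the most care is the passage through the filter $F$: because $L$ is only a domain and not a complete lattice, one cannot simply take suprema and pass to the limit, and it is the filtered-completeness together with the precise filter-based definition of $\gg$ that delivers the single index $n_0$ at which both inequalities can be combined. Everything else reduces to finite maxitivity and the elementary observation that the join of the partial joins $\nu(C_n)$ coincides with the sought supremum of the $\nu(B_j)$.
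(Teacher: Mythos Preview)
Your proof is correct and follows essentially the same route as the paper's: both reduce to showing $\sigma$-maxitivity from continuity from above by writing $\nu(B)=\nu(B_1\cup\cdots\cup B_n)\oplus\nu(B\setminus(B_1\cup\cdots\cup B_n))$, using continuity from above to get $\bigwedge_n\nu(B\setminus(B_1\cup\cdots\cup B_n))=0$, and then picking $s\gg u$ to extract a single index $n_0$ at which both pieces are dominated by $s$. Your explicit unpacking of the filter argument (passing to the upward closure and invoking the definition of $\gg$) is a slightly more detailed version of what the paper compresses into a single line, but the strategy is identical.
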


\begin{proof}
Let $\nu$ be an $L$-valued continuous from above maxitive measure on $\mathrsfs{B}$, and let us show that $\nu$ is $\sigma$-maxitive. 
So let $B_1,  B_2, \ldots \in \mathrsfs{B}$ and $B = \bigcup_n B_n$, and let $C_n = B \setminus B'_n$ with $B'_n = B_1 \cup \cdots \cup B_n$. 
By continuity from above, we have $\bigwedge_n \nu(C_n) = 0$. 
Let $u$ be an upper bound of $\{ \nu(B_n) : n \geqslant 1 \}$, and let $s \gg u$. Then $s \geqslant \nu(C_{n_0})$ for some $n_0$, so $\nu(B) = \nu(B'_{n_0}) \oplus \nu(C_{n_0}) = \nu(B_1) \oplus \cdots \oplus \nu(B_{n_0}) \oplus \nu(C_{n_0}) \leqslant s$. Thus, $\nu(B) \leqslant u$, which shows that $\nu(B)$ is the least upper-bound of $\{ \nu(B_n) : n \geqslant 1 \}$. 
 %let $u$ be an upper bound of $\{ \nu(B_n) : n \geqslant 1 \}$, and suppose that $\nu(B) \not\leqslant u$. 
%Since $L$ is a domain, there exists a map $\varphi : L \rightarrow [0, 1]$ that preserves filtered infima and arbitrary existing suprema such that $\varphi(\nu(B)) = 1$ and $\varphi(u) = 0$ by Lemma~\ref{lem:gierz}. 
%But the map $B' \mapsto \varphi(\nu(B'))$ is clearly a $[0, 1]$-valued optimal measure. 
%So let $C_n = B \setminus B_n$. By continuity from above we have $\bigwedge_n \nu(C_n) = 0$. 
%, so by the Murofushi--Sugeno--Agebko theorem (see \cite[Proposition~I-8.2]{Poncet11}), we have $1 = \varphi(\nu(B)) = \bigoplus_n \varphi(\nu(B_n)) \leqslant \varphi(u) = 0$, a contradiction.  
\end{proof}

Rie{\v{c}}anov{\'a} \cite{Riecanova84} studied the regularity of certain $S$-valued set functions, for some conditionally-complete ordered semigroup $S$ satisfying a series of conditions, among which the separation of points by continuous functionals. In the following lines we closely follow her approach, although we do not use directly her results, for our approach better matches the special case of $L$-valued optimal measures. In particular, $L$ is not assumed to be a semigroup, nor to be conditionally-complete. Unlike Rie{\v{c}}anov{\'a}, we do not examine the case of optimal measures defined on the collection of Baire (rather than Borel) subsets of a metrizable space, but we believe that this could be done with little additional effort. 

The following lemma is based on \cite[Proposition~IV-3.1]{Gierz03}. It allows one to generalize most theorems that hold for $[0, 1]$-valued maxitive measures to domain-valued maxitive measures. 

\begin{lemma}[Compare with Gierz et al.\ \protect{\cite[Proposition~IV-3.1]{Gierz03}}]\label{lem:gierz}
Assume that $L$ is a domain and let $s, t \in L$ such that $s \not\leqslant t$. 
Then there exists a map $\varphi : L \rightarrow [0, 1]$ preserving filtered infima and arbitrary existing suprema such that $\varphi(s) = 1$ and $\varphi(t) = 0$. 
\end{lemma}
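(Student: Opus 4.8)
The plan is to construct the separating map $\varphi$ by exhibiting a Scott-continuous-type order homomorphism into $[0,1]$ that detects the failure of $s \leqslant t$. Since we are in a domain with the way-above relation $\gg$, the natural candidate is built from the way-above filter of $s$. Specifically, since $s \not\leqslant t$ and $L$ is continuous, we have $s = \bigwedge \twoheaduparrow s$, so $s \not\leqslant t$ forces the existence of some $s_0 \gg s$ with $s_0 \not\leqslant t$ (otherwise every element way-above $s$ would dominate $t$, giving $s = \bigwedge \twoheaduparrow s \geqslant t$ by the Remark after the definition of continuity). This $s_0$ is the hook on which the whole construction hangs.

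**Next I would** define $\varphi$ along the lines of the dyadic interpolation argument used to prove Urysohn-type separation and the analogous Proposition~IV-3.1 in Gierz et al.\ \cite{Gierz03}. Using the interpolation property of the continuous poset $L$, I would inductively select a family $(a_q)_{q \in D}$ indexed by dyadic rationals $q \in D = [0,1] \cap \mathbb{Z}[1/2]$ such that $a_q \gg a_{q'}$ whenever $q < q'$, anchored so that $a_0$ is placed above $s_0$ (detecting $s$) and the chain separates from $t$ in the sense that $t$ lies below the relevant threshold. The map is then defined by
$$
\varphi(r) = \bigwedge \{ q \in D : a_q \not\geqslant r \} \wedge 1,
$$
or a suitable variant reading off from the interpolating chain where $r$ sits. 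The requirements $\varphi(s) = 1$ and $\varphi(t) = 0$ are arranged by the placement of the chain relative to $s_0$ and $t$.

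**The properties to verify** are that $\varphi$ preserves filtered infima and arbitrary existing suprema. Preservation of suprema follows because $\varphi$ is defined through the way-above relation, which interacts well with suprema in a continuous poset; preservation of filtered infima is exactly the content being borrowed from the cited Gierz et al.\ proposition, and it relies on the defining property of $\gg$ (namely $s \gg r$ and $r \geqslant \bigwedge F$ force $s \in F$ for filters $F$) together with filtered-completeness of the domain. Checking that the dyadic chain exists requires the interpolation property, which the excerpt recalls holds in every continuous poset \cite[Theorem~I-1.9]{Gierz03}.

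**The main obstacle** I anticipate is verifying continuity of $\varphi$ in both directions \emph{simultaneously} while keeping the two normalization conditions $\varphi(s)=1$, $\varphi(t)=0$ intact; the delicate point is that $L$ is only a continuous poset with filtered infima, not a lattice, so arbitrary meets and joins need not exist, and one must be careful that every supremum or filtered infimum invoked in the argument genuinely exists in $L$ before claiming $\varphi$ respects it. The interpolation step, producing a two-sided dyadic chain that respects $\gg$ strictly, is where the bulk of the technical care goes, and aligning its endpoints so that the chosen $s_0 \gg s$ with $s_0 \not\leqslant t$ cleanly yields $\varphi(s)=1$ and $\varphi(t)=0$ will require the most attention.
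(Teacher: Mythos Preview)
Your dyadic-interpolation plan is essentially a reconstruction of the proof of Gierz et al.'s Proposition~IV-3.1 itself, carried out directly inside $L$. The paper takes a much shorter route: it simply adjoins a top element, setting $\overline{L} = L \cup \{\top\}$, observes that $\overline{L}$ is again a domain, invokes \cite[Proposition~IV-3.1]{Gierz03} verbatim to obtain $\psi : \overline{L} \to [0,1]$ with $\psi(s)=1$, $\psi(t)=0$ and the required preservation properties, and then restricts $\psi$ to $L$. The restriction automatically preserves filtered infima and arbitrary existing suprema in $L$ because these are computed the same way in $\overline{L}$.

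What your approach buys is self-containment: you do not treat the cited proposition as a black box, and you confront directly the issue you flag---that $L$ is merely a filtered-complete continuous poset rather than a complete lattice. What the paper's approach buys is brevity and safety: by passing to $\overline{L}$ it places itself squarely in the setting where the cited result applies without modification, so none of the delicate bookkeeping about which suprema exist needs to be redone. Your outline is sound, and the obstacles you name (anchoring the interpolating chain, checking preservation in both directions) are the right ones; but once you have located $s_0 \gg s$ with $s_0 \not\leqslant t$, the one-line reduction via $\overline{L}$ dispatches everything that remains.
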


\begin{proof}
We add a top to $L$, i.e.\ we let $\overline{L} = L \cup \{ \top \}$ with $r \leqslant \top$ for all $r \in L$. 
Then $\overline{L}$ is a domain, and by \cite[Proposition~IV-3.1]{Gierz03} there exists a map $\psi : \overline{L} \rightarrow [0, 1]$ preserving filtered infima and arbitrary existing suprema such that $\psi(s) = 1$ and $\psi(t) = 0$. It then suffices to define $\varphi = \psi|_L$. 
\end{proof}

\begin{proposition}\label{prop:metric}
Assume that $L$ is a domain. 
Then, on a metrizable space, every $L$-valued optimal measure $\nu$ satisfies  
$$
\nu(B) = \bigwedge_{G \in \mathrsfs{G}, G \supset B} \nu(G) = \bigoplus_{F \in \mathrsfs{F}, F \subset B} \nu(F), 
$$
for all $B \in \mathrsfs{B}$. 
\end{proposition}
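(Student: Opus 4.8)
The two displayed equalities say exactly that $\nu$ is outer-continuous, i.e.\ $\nu = \nu^{+}$, and that $\nu$ is inner-continuous with respect to \emph{closed} sets. The plan is to deduce each of them from an auxiliary ``vanishing residual'' statement. Concretely, I would prove that for every Borel set $B$ one has
\begin{equation*}
\bigwedge_{G \in \mathrsfs{G},\, G \supset B} \nu(G \setminus B) = 0 \qquad\text{and}\qquad \bigwedge_{F \in \mathrsfs{F},\, F \subset B} \nu(B \setminus F) = 0 ,
\end{equation*}
the two families being filtered (since $\mathrsfs{G}$ is stable under finite intersections and $\mathrsfs{F}$ under finite unions), so that both infima exist by filtered-completeness of $L$. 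Each of these implies the corresponding equality with no use of subtraction. For outer-continuity, fix $B$ and $s \gg \nu(B)$; then $s \gg 0$, so the first residual statement yields an open $G \supset B$ with $s \geqslant \nu(G \setminus B)$; since also $s \geqslant \nu(B)$, we get $s \geqslant \nu(B) \oplus \nu(G \setminus B) = \nu(G) \geqslant \bigwedge_{G' \supset B} \nu(G')$, and the way-above criterion of Section~\ref{sec:domain} gives $\nu(B) \geqslant \bigwedge_{G' \supset B} \nu(G')$. Dually, $\nu(B)$ is clearly an upper bound of $\{\nu(F) : F \in \mathrsfs{F},\, F \subset B\}$; for any upper bound $u$ and any $s \gg u$, the second residual statement produces a closed $F \subset B$ (necessarily with $\nu(F) \leqslant u$) such that $s \geqslant \nu(B \setminus F)$, and since $s \geqslant u \geqslant \nu(F)$ as well, $s \geqslant \nu(F) \oplus \nu(B \setminus F) = \nu(B)$; the way-above criterion then gives $\nu(B) \leqslant u$, so $\nu(B)$ is the supremum in question.

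Next I would check the two residual statements on the generators of $\mathrsfs{B}$, which is where metrizability and continuity from above enter. If $B$ is open, the first statement is trivial ($G = B$); if $B$ is closed, then $B = \bigcap_n G_n$ with $G_n = \{x : d(x,B) < 1/n\}$ open and decreasing, so $G_n \setminus B \downarrow \emptyset$ and continuity from above gives $\bigwedge_n \nu(G_n \setminus B) = \nu(\emptyset) = 0$. Symmetrically, the second statement is trivial for closed $B$ ($F = B$), and for open $B$ one writes $B = \bigcup_n F_n$ with $F_n = \{x : d(x, B^{c}) \geqslant 1/n\}$ closed and increasing, so $B \setminus F_n \downarrow \emptyset$ and continuity from above gives $\bigwedge_n \nu(B \setminus F_n) = 0$.

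The heart of the argument is that the classes of Borel sets satisfying each residual statement are stable under countable unions and countable intersections; then, since they contain both $\mathrsfs{G}$ and $\mathrsfs{F}$, a transfinite induction along the Borel hierarchy (each additive level being a countable union of lower levels, each multiplicative level a countable intersection) shows they exhaust $\mathrsfs{B}$. For the first statement this is immediate for unions, where $G = \bigcup_n G_n$ stays open: choosing $G_n \supset B_n$ with $\nu(G_n \setminus B_n) \leqslant s$ gives $G \setminus B \subset \bigcup_n (G_n \setminus B_n)$, hence $\nu(G \setminus B) \leqslant \bigoplus_n \nu(G_n \setminus B_n) \leqslant s$ by $\sigma$-maxitivity. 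For intersections one first treats finite intersections the same way (using that $\mathrsfs{G}$ is stable under them), reduces a countable intersection to a decreasing one $B_n \downarrow B$, and absorbs the tail through continuity from above (pick $n_0$ with $\nu(B_{n_0} \setminus B) \leqslant s$, then $G \supset B_{n_0}$ with $\nu(G \setminus B_{n_0}) \leqslant s$, so $\nu(G \setminus B) \leqslant s$). The second residual statement is handled by the exactly dual bookkeeping, with the roles of union and intersection, and of $\mathrsfs{G}$ and $\mathrsfs{F}$, interchanged (here intersections are the immediate case, since a countable intersection of closed sets is closed).

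The main obstacle, and the reason the classical additive regularity proof cannot simply be transcribed, is the absence of subtraction: one cannot pass from $B$ to $B^{c}$, and knowing that $\nu(F)$ approximates $\nu(B)$ does not by itself make the residual $\nu(B \setminus F)$ small. The device that replaces it is precisely the vanishing-residual formulation together with the fact that, for a maxitive measure, a supremum of countably many copies of a single bound $s$ is again $s$; this is what prevents the residuals from accumulating and removes any need for an $\varepsilon/2^{n}$ splitting. The one genuinely delicate point is the asymmetry in the closure step: for each residual statement one of the two Boolean operations preserves the approximating open or closed set outright, while the other survives only after reducing to a monotone sequence and invoking continuity from above; optimality of $\nu$ (that is, $\sigma$-maxitivity together with continuity from above) is used exactly at these two places. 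Finally, I would note that one could alternatively reduce to the case $L = [0,1]$ by means of Lemma~\ref{lem:gierz} and run the same hierarchy argument there, but the direct treatment in $L$ sketched above seems shorter.
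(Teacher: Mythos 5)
Your proof is correct, but it takes a genuinely different route from the paper's. You work directly in the domain $L$: you isolate the two vanishing-residual statements $\bigwedge_{G \supset B}\nu(G\setminus B)=0$ and $\bigwedge_{F \subset B}\nu(B\setminus F)=0$, verify them on open and closed sets using metrizability and continuity from above, and propagate them through $\mathrsfs{B}$ via closure of the two good-sets classes under countable unions and countable intersections (which suffices in a metrizable space since closed sets are $G_\delta$ and open sets are $F_\sigma$); the idempotency of $\oplus$ replaces the classical $\varepsilon/2^n$ splitting, exactly as you observe, and your reduction of countable intersections (resp.\ unions) to monotone ones handled by continuity from above is where optimality enters. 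The paper instead reduces to $[0,1]$-valued measures: given a hypothetical failure of the stated equalities it invokes Lemma~\ref{lem:gierz} to produce $\varphi : L \rightarrow [0,1]$ preserving filtered infima and existing suprema that separates the offending elements, and shows that the class of $B$ admitting a sandwich $G \supset B \supset F$ with $\varphi(\nu(G\setminus F)) \leqslant 1/2$ is a $\sigma$-algebra containing the open sets --- the fixed threshold $1/2$ and the symmetric two-sided sandwich make that class manifestly closed under complementation, so no hierarchy or two-class bookkeeping is needed --- and then derives a contradiction. Your version avoids the separation lemma altogether and produces the residual identities explicitly; the paper's version is shorter once Lemma~\ref{lem:gierz} is in hand and treats inner and outer approximation in a single stroke. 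You anticipate the paper's route in your closing remark, so the two arguments are really complementary rather than in conflict.
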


\begin{proof}
Let $E$ be a metrizable space and $d$ be a metric generating the topology. 
Let $\varphi : L \rightarrow [0, 1]$ be a map preserving filtered infima and arbitrary existing suprema, and let $\nu_{\varphi}$ be the map defined on $\mathrsfs{B}$ by $\nu_{\varphi}(B) = \varphi(\nu(B))$. The properties of $\varphi$ imply that $\nu_{\varphi}$ is an optimal measure. 
Let $\mathrsfs{A}$ be the collection of all $B \in \mathrsfs{B}$ such that $\nu_{\varphi}(G \setminus F) \leqslant 1/2$, for some open subset $G$ and closed subset $F$ such that $G \supset B \supset F$. Let us show first that  $\mathrsfs{A}$ contains all open subsets, so let $B$ be open. Let $F_n = \{ x \in E : d(x, E\setminus B) \geqslant n^{-1} \}$. Then $(F_n)_{n \geqslant 1}$ is a nondecreasing family of closed subsets whose union is $B$. Since  $\nu_{\varphi}$ is an optimal measure, $\nu_{\varphi}(B \setminus F_n)$ tends to $0$ when $n \uparrow \infty$. Thus, we can find some closed subset $F \subset B$ with $\nu_{\varphi}(B \setminus F) \leqslant 1/2$, and this proves that $B \in \mathrsfs{A}$. 

We now show that $\mathrsfs{A}$ is a $\sigma$-algebra. Clearly, $\emptyset \in \mathrsfs{A}$, and $B \in \mathrsfs{A}$ implies $E \setminus B \in \mathrsfs{A}$. Let $(B_n)_{n \geqslant 1}$ be a family of elements of $\mathrsfs{A}$. We prove that $B = \bigcup_{n} B_n \in \mathrsfs{A}$. For all $n$, there are some $G_n \supset B_n \supset F_n$ satisfying $\nu_{\varphi}(G_n  \setminus F_n) \leqslant 1/2$. If $G = \bigcup_{n} G_n$ and $F = \bigcup_{n} F_n$, then $G \supset B \supset F$ and $\nu_{\varphi}(G \setminus F) \leqslant 1/2$. However, $F$ is not closed in general. So let $H_n$ denote the closed subset $\bigcup_{k = 1}^n F_k$. As above, $(H_n)_{n \geqslant 1}$ is a nondecreasing family of closed subsets whose union is $F$, so we can find some closed subset $H_{n_0} \subset F$ with $\nu_{\varphi}(F \setminus H_{n_0}) \leqslant 1/2$, hence $\nu_{\varphi}(G \setminus H_{n_0}) \leqslant 1/2$. 	Consequently, $\mathrsfs{A}$ coincides with the Borel $\sigma$-algebra $\mathrsfs{B}$. 

Assume that, for some $B \in \mathrsfs{B}$, $\nu^{+}(B)$ is not the least upper-bound of $\{ \nu(F) : F \in \mathrsfs{F}, F \subset B \}$. Hence, there exists some upper-bound $u \in L$ of $\{ \nu(F) : F \in \mathrsfs{F}, F \subset B \}$ such that $\nu^{+}(B) \not\leqslant u$. Since $L$ is a domain, there exists some $\varphi : L \rightarrow [0, 1]$ that preserves filtered infima and arbitrary existing suprema such that $\varphi(\nu^{+}(B)) = 1$ and $\varphi(u) = 0$ (see Lemma~\ref{lem:gierz}). 
The previous point gives the existence of some $G \supset B \supset F$ such that $\nu_{\varphi}(G \setminus F) \leqslant 1/2$. Moreover, $\varphi(\nu^{+}(B)) = 1$ implies $\nu_{\varphi}(G) = 1$, and $\varphi(u) = 0$ implies $\nu_{\varphi}(F) = 0$. But $1 = \nu_{\varphi}(G) = \nu_{\varphi}(G \setminus F) \oplus \nu_{\varphi}(F) \leqslant 1/2$, a contradiction. 
\end{proof}

\begin{corollary}\label{coro:sclc}
Assume that $L$ is a domain.  
Then, on a separable metrizable space, every $L$-valued optimal measure is regular. 
\end{corollary}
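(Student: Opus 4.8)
The plan is to deduce regularity from Proposition~\ref{lem:loccomp} by verifying that any $L$-valued optimal measure $\nu$ on a separable metrizable space $E$ is simultaneously weakly outer-continuous and weakly inner-continuous. I note at the outset that $E$, being metrizable, is Hausdorff and hence quasisober, so the hypotheses and earlier results apply and $\mathrsfs{K} = \mathrsfs{Q}$.

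For weak outer-continuity I would simply invoke Proposition~\ref{prop:metric}. On any metrizable space its first displayed equality reads $\nu(B) = \bigwedge_{G \in \mathrsfs{G}, G \supset B} \nu(G) = \nu^{+}(B)$ for every Borel set $B$; thus $\nu$ is (fully) outer-continuous, and in particular $\nu(K) = \nu^{+}(K)$ for every $K \in \mathrsfs{K}$, which is exactly weak outer-continuity.

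For weak inner-continuity I would use the characterization of Lemma~\ref{lem:wic}, reducing the task to establishing Equation~(\ref{eq:eqo}), namely $\nu(\bigcup \mathrsfs{O}) = \bigoplus \nu(\mathrsfs{O})$ for every family $\mathrsfs{O}$ of open subsets. This is where separability enters: a separable metrizable space is second-countable, so every open subset is Lindel\"{o}f. Given $\mathrsfs{O}$ with union $G$, the Lindel\"{o}f property yields a countable subfamily $(O_n)_{n}$ of $\mathrsfs{O}$ with $\bigcup_n O_n = G$; since optimal measures are $\sigma$-maxitive by definition, $\nu(G) = \bigoplus_n \nu(O_n) \leqslant \bigoplus \nu(\mathrsfs{O})$, the reverse inequality being monotonicity. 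This gives Equation~(\ref{eq:eqo}), hence weak inner-continuity.

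With both weak continuities in hand, Proposition~\ref{lem:loccomp} shows that $\nu$ is regular (indeed completely maxitive). I do not expect a genuine obstacle at this stage: the substantive work is already carried out in Proposition~\ref{prop:metric}, whose proof reduces matters to the $[0,1]$-valued case through Lemma~\ref{lem:gierz} and exploits continuity from above. The present corollary only adds the purely topological observation that separability upgrades the metrizable-space conclusion to the countable extraction needed for weak inner-continuity; assembling the two pieces through Proposition~\ref{lem:loccomp} is then routine.
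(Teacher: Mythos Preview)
Your proof is correct and follows essentially the same route as the paper: outer-continuity from Proposition~\ref{prop:metric}, weak inner-continuity from second-countability (the paper phrases this via a countable base, you via the Lindel\"{o}f property), and conclusion by Proposition~\ref{lem:loccomp}. The paper simply packages the second and third steps into a citation of Corollary~\ref{coro:sc}, but the underlying argument is identical.
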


\begin{proof}
Let $E$ be a separable metrizable space, and let $\nu$ be an $L$-valued optimal measure on $\mathrsfs{B}$. 
Then $\nu$ is outer-continuous by Proposition~\ref{prop:metric}. As a separable metrizable space, $E$ is second-countable, so $\nu$ is also inner-continuous by Corollary~\ref{coro:sc}, hence regular. 
\end{proof}

\begin{remark}
The previous result was proved by Murofushi and Sugeno \cite[Theorem~4.1]{Murofushi93} for $\overline{\reels}_+$-valued optimal measures. 
\end{remark}

\begin{remark}
Recall that a topological space $E$ is separable metrizable in all the following cases: 
\begin{enumerate}
	\item if $E$ is second-countable regular Hausdorff; %(in particular if $E$ is second-countable locally compact); 
	\item if $E$ is $\sigma$-compact and metrizable; 	
	\item if $E$ is Polish (this results from the definition of a Polish space!). 
\end{enumerate}
\end{remark}

%\begin{remark}
%Recall for comparison that a $\sigma$-additive measure defined on a second-countable locally compact Hausdorff space is always regular, whenever it takes finite values on compact subsets (a hypothesis that is not needed in Corollary~\ref{coro:sclc}). 
%\end{remark}

Part of the following result is included in Proposition~\ref{prop:trpolish}. 

\begin{proposition}
Assume that $L$ is a domain. 
Then, on a Polish space or on a $\sigma$-compact and metrizable space, every $L$-valued optimal measure is tight  regular. 
\end{proposition}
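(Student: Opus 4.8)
The plan is to reduce both cases to results already established, the only genuinely new ingredient being that an optimal measure on a Lindel\"of space is $\mathrsfs{F}$-smooth. Throughout, let $\nu$ be an $L$-valued optimal measure, i.e.\ a continuous from above $\sigma$-maxitive measure. I would first observe that both a Polish space and a $\sigma$-compact metrizable space are separable metrizable (for the latter, a countable union of compact, hence separable, metrizable subspaces is separable), so in particular second-countable and Lindel\"of.

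The key auxiliary fact I would prove is the following: \emph{on a Lindel\"of space, every continuous from above maxitive measure $\nu$ is $\mathrsfs{F}$-smooth.} Let $(F_j)_{j \in J}$ be a filtered family of closed subsets with $F = \bigcap_{j} F_j$. The complements $G_j = E \setminus F_j$ form a family of open sets, directed upward by inclusion (by filteredness of $(F_j)$), with $\bigcup_j G_j = E \setminus F$. The Lindel\"of property lets one extract a countable subfamily with the same union, and directedness lets one arrange it into an increasing sequence $(G_{i_n})$; correspondingly $(F_{i_n})$ is a decreasing sequence of closed sets with $\bigcap_n F_{i_n} = F$. Continuity from above then gives $\nu(F) = \bigwedge_n \nu(F_{i_n})$. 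Since $(F_{i_n})$ is a subfamily of $(F_j)$ we have $\bigwedge_j \nu(F_j) \leqslant \bigwedge_n \nu(F_{i_n}) = \nu(F)$, while monotonicity of $\nu$ yields the reverse inequality $\nu(F) \leqslant \bigwedge_j \nu(F_j)$; hence $\nu(F) = \bigwedge_j \nu(F_j)$, as desired. The relevant infima exist because $L$ is filtered-complete and $(\nu(F_j))$ is filtered.

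With this in hand, the Polish case is immediate: $\nu$ is $\mathrsfs{F}$-smooth and $\sigma$-maxitive, so Proposition~\ref{prop:trpolish} shows that $\nu$ is tight regular. For the $\sigma$-compact metrizable case, regularity follows at once from Corollary~\ref{coro:sclc} (a separable metrizable space), while tightness is obtained directly from continuity from above: writing $E = \bigcup_n K_n$ with $K_n$ compact and, after replacing $K_n$ by $K_1 \cup \cdots \cup K_n$, increasing, the closed sets $E \setminus K_n$ decrease to $\bigcap_n (E \setminus K_n) = \emptyset$, whence $\bigwedge_{K \in \mathrsfs{K}} \nu(E \setminus K) \leqslant \bigwedge_n \nu(E \setminus K_n) = \nu(\emptyset) = 0$. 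Alternatively one may invoke $\mathrsfs{F}$-smoothness, which gives $\mathrsfs{K}$-smoothness since compact subsets of a metrizable space are closed, together with Corollary~\ref{coro:sigcomp}.

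I expect the main obstacle to be the auxiliary $\mathrsfs{F}$-smoothness step, specifically the passage from an arbitrary filtered family of closed sets to a countable decreasing sequence with the same intersection: this is where the topological hypothesis (Lindel\"of, coming from separability) is essential, and where one must be careful to combine both the Lindel\"of extraction and the directedness of the complementary open family, so that continuity from above (which only controls decreasing sequences) can be brought to bear. The remaining implications are routine applications of the cited results.
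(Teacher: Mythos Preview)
Your argument is correct. One small imprecision: the auxiliary $\mathrsfs{F}$-smoothness lemma as you state it (``on a Lindel\"of space'') actually needs that \emph{open} subsets are Lindel\"of, since the $G_j$ cover $E\setminus F$ rather than $E$; but you have already observed that the spaces at hand are second-countable, hence hereditarily Lindel\"of, so this does not affect the proof of the proposition.

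The approach differs from the paper's. The paper takes regularity for granted (via Corollary~\ref{coro:sclc}) and proves only tightness, by a direct construction: in the Polish case it picks a countable dense set, forms finite unions $F_{n,p}$ of closed $1/p$-balls, uses optimality to find $n_p$ with $\epsilon \geqslant \nu(E\setminus F_{n_p,p})$, and checks that $K_\epsilon = \bigcap_p F_{n_p,p}$ is totally bounded and closed, hence compact by completeness of the metric; the $\sigma$-compact case is handled by the same scheme with the $F_{n,p}$ taken from the compact exhaustion. Your route instead isolates a general principle (continuity from above $+$ hereditary Lindel\"of $\Rightarrow$ $\mathrsfs{F}$-smooth) and then invokes Proposition~\ref{prop:trpolish} as a black box in the Polish case. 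This is more modular and avoids redoing the total-boundedness computation, at the cost of relying on the chain Proposition~\ref{prop:trpolish} $\to$ Proposition~\ref{prop:f}, whose tightness step is itself the Puhalskii argument the paper reproduces here. For the $\sigma$-compact case your direct computation of tightness from the compact exhaustion is essentially identical to the paper's.
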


The proof is inspired by that of \cite[Theorem~1.7.8]{Puhalskii01}. 

\begin{proof}
We only have to prove tightness. 
First assume that $E$ is a Polish space and let $\nu$ be an $L$-valued optimal measure on $\mathrsfs{B}$. 
Since $E$ is separable, there is some sequence $(x_n)$ dense in $E$. Let $\epsilon \gg 0$. 
Let $F_{n,p} = B_{1,p} \cup \ldots \cup B_{n,p}$, where $B_{n,p}$ is the closed ball of radius $1/p$ and center $x_n$. Then, for all $p$, $E = \bigcup_{n} F_{n,p}$. Since $\nu$ is optimal, there is some $n_p$ such that $\epsilon \geqslant \nu(E \setminus F_{n_p, p})$. 
Let $K_{\epsilon}$ denote the subset $\bigcap_{p} F_{n_p, p}$. For all $\alpha > 0$, $K_{\epsilon}$ can be covered by a finite number of balls of radius at most $\alpha$, i.e.\ $K_{\epsilon}$ is totally bounded. Since $E$ is completely metrizable, $K_{\epsilon}$ is compact. Moreover, $\epsilon \geqslant \nu(E \setminus K_{\epsilon})$, for all $\epsilon \gg 0$. Thus, $\nu$ is tight. 

For the case where $E$ is $\sigma$-compact and metrizable, a similar proof can be given, for one can write $E = \bigcup_{n} F_{n,p}$, with $F_{n,p} = F_{n,1}$ compact. 
\end{proof}

\section{Decomposition of maxitive measures}\label{sec:decompose}

In \cite{Poncet10}, we developed part of the following material in a non-topolo\-gi\-cal framework. 
Here $E$ is again a quasisober topological space, and $\mathrsfs{B}$ denotes its collection of Borel subsets. A poset is a \textit{lattice} if every nonempty finite subset has a supremum and an infimum. A lattice is \textit{distributive} if finite infima distribute over finite suprema, and \textit{conditionally-complete} if every nonempty subset bounded above has a supremum. 
%A continuous conditionally-complete lattice $L$ is always a domain; 
According to an assumption made all along this paper (see Section~\ref{sec:domain}), a continuous conditionally-complete lattice, which is a special case of domain, will always have a bottom element $0$. 

\begin{definition}\label{partreg}
Assume that $L$ is a continuous conditionally-complete lattice. Let $\nu$ be an $L$-valued maxitive measure on $\mathrsfs{B}$. Then the \textit{regular part} of $\nu$ is the map defined on $\mathrsfs{B}$ by 
$$
\lfloor \nu \rfloor(B) = \bigoplus_{K \in\mathrsfs{K}, K\subset B}  \nu^{+}(K). 
$$
\end{definition} 

The following proposition confirms that the terminology is appropriate. 

\begin{proposition}
Assume that $L$ is a continuous conditionally-complete lattice and $E$ is a quasisober space. Let $\nu$ be an $L$-valued maxitive measure on $\mathrsfs{B}$. Then the regular part of $\nu$ is a regular maxitive measure on $\mathrsfs{B}$, with density $c^{+} : x \mapsto  \nu^{+}([x])$.  
Moreover, 
$\lfloor\lfloor \nu \rfloor\rfloor = \lfloor \nu \rfloor$. 
\end{proposition}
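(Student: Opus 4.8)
The plan is to reduce the whole statement to the single identity
$$\lfloor \nu \rfloor(B) = \bigoplus_{x \in B} c^{+}(x), \qquad c^{+}(x) = \nu^{+}([x]),$$
after which regularity and idempotency follow from the characterizations already established. First I would record that $\lfloor \nu \rfloor$ is well defined: since $\nu^{+}$ is monotone, the set $\{ \nu^{+}(K) : K \in \mathrsfs{K}, K \subset B \}$ is bounded above by $\nu^{+}(B)$, so its supremum exists because $L$ is conditionally-complete.

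To obtain the density formula I would invoke Lemma~\ref{lem:reg0}, which yields $\nu^{+}(K) = \bigoplus_{x \in K} c^{+}(x)$ for every $K \in \mathrsfs{K}$, whence $\lfloor \nu \rfloor(B) = \bigoplus_{K \in \mathrsfs{K}, K \subset B} \bigoplus_{x \in K} c^{+}(x)$. For the inequality $\bigoplus_{x \in B} c^{+}(x) \leqslant \lfloor \nu \rfloor(B)$, the key observation is that for each $x \in B$ the class $[x]$ is a compact Borel subset of $B$ (Corollary~\ref{lem:tilde}), so that $c^{+}(x) = \nu^{+}([x]) \leqslant \lfloor \nu \rfloor(B)$; this simultaneously shows that $\{ c^{+}(x) : x \in B \}$ is bounded above, hence that $\bigoplus_{x \in B} c^{+}(x)$ exists. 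The reverse inequality is immediate from the displayed double supremum together with $\bigoplus_{x \in K} c^{+}(x) \leqslant \bigoplus_{x \in B} c^{+}(x)$.

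With the density in hand, complete maxitivity of $\lfloor \nu \rfloor$ (in particular $\lfloor \nu \rfloor(\emptyset) = 0$) follows from associativity of suprema, so $\lfloor \nu \rfloor$ is a maxitive measure; and since $c^{+}$ is upper-semicontinuous (as noted right after the definition of upper-semicontinuity), $\lfloor \nu \rfloor$ admits the usc cardinal density $c^{+}$. Regularity is then precisely the implication (\ref{reg5}) $\Rightarrow$ (\ref{reg6}) of Theorem~\ref{thm:reg0}. For idempotency I would use that the now-regular measure $\lfloor \nu \rfloor$ is outer-continuous, saturated, and inner-continuous: outer-continuity gives $\lfloor \nu \rfloor^{+} = \lfloor \nu \rfloor$, and therefore
$$\lfloor \lfloor \nu \rfloor \rfloor(B) = \bigoplus_{K \in \mathrsfs{K}, K \subset B} \lfloor \nu \rfloor^{+}(K) = \bigoplus_{K \in \mathrsfs{K}, K \subset B} \lfloor \nu \rfloor(K) = \bigoplus_{K \in \mathrsfs{K}, K \subset B} \lfloor \nu \rfloor(\uparrow\!\! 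K) = \lfloor \nu \rfloor(B),$$
the last two steps using saturation and then inner-continuity.

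I expect no deep difficulty here, since the hard analytic content is already packaged in Lemma~\ref{lem:reg0} and Theorem~\ref{thm:reg0}. The one point that genuinely requires care is the bookkeeping of suprema: every supremum written above must be shown to exist, and the interchange of the two suprema in the density computation must be justified. This is exactly where the conditional-completeness of $L$ is indispensable (the definition of $\lfloor \nu \rfloor$ would otherwise risk being meaningless), and it is the reason the ambient lattice is assumed conditionally-complete rather than merely a domain.
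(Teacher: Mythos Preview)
Your proposal is correct and follows essentially the same approach as the paper: both establish the density formula via Lemma~\ref{lem:reg0}, then deduce regularity from the implication (\ref{reg5}) $\Rightarrow$ (\ref{reg6}) of Theorem~\ref{thm:reg0}. The only cosmetic difference is in the idempotency step: the paper observes directly that $\lfloor \nu \rfloor(K) = \nu^{+}(K)$ for $K \in \mathrsfs{K}$ (since $K$ itself appears in the defining supremum), which together with outer-continuity immediately gives $\lfloor \nu \rfloor^{+}(K) = \nu^{+}(K)$ and hence $\lfloor\lfloor \nu \rfloor\rfloor = \lfloor \nu \rfloor$; your route through saturation and inner-continuity is equally valid but slightly less direct.
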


\begin{proof}
By Lemma~\ref{lem:reg0}, $\nu^{+}(K) = \bigoplus_{x \in K} c^{+}(x)$ for all compact Borel subsets $K$ of $E$, so we have $\lfloor \nu \rfloor(B) = \bigoplus_{x \in B} c^{+}(x)$, for all $B \in \mathrsfs{B}$. This shows that $\lfloor \nu \rfloor$ has a usc cardinal density, hence is regular by Theorem~\ref{thm:reg0}. 
Outer-continuity of $\lfloor \nu \rfloor$ implies that $\lfloor \nu \rfloor^{+}(K) = \lfloor \nu \rfloor(K) = \nu^{+}(K)$, for all $K \in \mathrsfs{K}$, so $\lfloor\lfloor \nu \rfloor\rfloor = \lfloor \nu \rfloor$. 
\end{proof}

The following theorem states the existence of a \textit{singular part} $\bot\nu$ of a maxitive measure $\nu$. 

\begin{theorem}\label{thm:singpart}
Assume that $L$ is a continuous conditionally-complete distributive lattice and $E$ is a quasisober space. 
Let $\nu$ be an $L$-valued maxitive measure on $\mathrsfs{B}$. Then there exists a least maxitive measure $\bot\nu$ on $\mathrsfs{B}$, called the \textit{singular part} of $\nu$, such that the decomposition 
\begin{equation}\label{eq:decomp}
\nu^{+} = \lfloor \nu \rfloor \oplus \bot\nu 
\end{equation}
holds. Moreover, the singular part $\bot\nu$ vanishes on all subsets of the form $[x]$, $x\in E$, and the singular part of the regular part of $\nu$ equals $0$, i.e.\ $\bot \lfloor \nu \rfloor = 0$. 
\end{theorem}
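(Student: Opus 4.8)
The plan is to realize $\bot\nu$ as the \emph{monotone hull} of the pointwise relative complement of $\lfloor \nu \rfloor$ inside $\nu^{+}$. First I would equip $L$ with a residuation. For $a, b \in L$ the set $S_{a,b} = \{ c \in L : a \oplus c \geqslant b \}$ is nonempty (it contains $b$) and, by distributivity, closed under finite infima, hence is a filter; filtered-completeness of $L$ then yields its infimum $b \ominus a := \bigwedge S_{a,b}$, and Lemma~\ref{lem:jcont} (join-continuity) gives $a \oplus (b \ominus a) = \bigwedge_{c \in S_{a,b}} (a \oplus c) \geqslant b$. Thus $b \ominus a$ is the \emph{least} $c$ with $a \oplus c \geqslant b$, and it satisfies $a \oplus (b \ominus a) = a \oplus b$; in particular $b \ominus a = 0$ whenever $a \geqslant b$, and $a \oplus (b \ominus a) = b$ whenever $a \leqslant b$.

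Recall from Example~\ref{ex:nuplus} that $\nu^{+}$ is a maxitive measure, and that $\lfloor \nu \rfloor \leqslant \nu^{+}$ since $\nu^{+}$ is monotone. I would then introduce the set function $\rho(B) = \nu^{+}(B) \ominus \lfloor \nu \rfloor(B)$, which satisfies $\rho(\emptyset) = 0$, and prove that it is \emph{sub-maxitive}, i.e.\ $\rho(A \cup C) \leqslant \rho(A) \oplus \rho(C)$ for all Borel $A, C$. This rests on the order-theoretic inequality $(b_1 \oplus b_2) \ominus (a_1 \oplus a_2) \leqslant (b_1 \ominus a_1) \oplus (b_2 \ominus a_2)$, which is immediate from the least property of $\ominus$ because $(a_1 \oplus a_2) \oplus (b_1 \ominus a_1) \oplus (b_2 \ominus a_2) = (a_1 \oplus b_1) \oplus (a_2 \oplus b_2) \geqslant b_1 \oplus b_2$; one applies it with $b_i = \nu^{+}$ and $a_i = \lfloor \nu \rfloor$, using the maxitivity of both $\nu^{+}$ and $\lfloor \nu \rfloor$.

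Next I would define $\bot\nu(B) = \bigoplus_{B' \in \mathrsfs{B}, B' \subset B} \rho(B')$, the least monotone set function dominating $\rho$. This supremum exists since $\rho(B') \leqslant \nu^{+}(B') \leqslant \nu^{+}(B)$, so the family is bounded above and $L$ is conditionally-complete. A short computation shows $\bot\nu$ is maxitive: for $B' \subset A \cup C$ one writes $B' = (B' \cap A) \cup (B' \cap C)$ and applies sub-maxitivity of $\rho$ to get $\rho(B') \leqslant \bot\nu(A) \oplus \bot\nu(C)$, whence $\bot\nu(A \cup C) \leqslant \bot\nu(A) \oplus \bot\nu(C)$, the reverse inequality being clear. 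For the decomposition, the bounds $\bot\nu \leqslant \nu^{+}$ and $\lfloor \nu \rfloor \leqslant \nu^{+}$ give $\lfloor \nu \rfloor \oplus \bot\nu \leqslant \nu^{+}$, while taking $B' = B$ gives $\bot\nu(B) \geqslant \rho(B)$, so $\lfloor \nu \rfloor(B) \oplus \bot\nu(B) \geqslant \lfloor \nu \rfloor(B) \oplus (\nu^{+}(B) \ominus \lfloor \nu \rfloor(B)) = \nu^{+}(B)$; hence equality~(\ref{eq:decomp}).

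Leastness then follows: any maxitive measure $\mu$ with $\nu^{+} = \lfloor \nu \rfloor \oplus \mu$ satisfies $\mu(B') \geqslant \nu^{+}(B') \ominus \lfloor \nu \rfloor(B') = \rho(B')$ by the least property of $\ominus$, and monotonicity gives $\mu(B) \geqslant \mu(B') \geqslant \rho(B')$ for every $B' \subset B$, so $\mu \geqslant \bot\nu$. For the vanishing statements: if $\emptyset \neq B' \subset [x]$, then $[y] = [x]$ for every $y \in B'$, so by the density formula $c^{+}(y) = \nu^{+}([y])$ for $\lfloor \nu \rfloor$ (preceding proposition) together with Lemma~\ref{lem:reg0} one gets $\lfloor \nu \rfloor(B') = \nu^{+}([x]) \geqslant \nu^{+}(B')$; hence $\rho(B') = 0$ and $\bot\nu([x]) = 0$. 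Finally, $\lfloor \nu \rfloor$ is regular, thus outer-continuous with $\lfloor \lfloor \nu \rfloor \rfloor = \lfloor \nu \rfloor$ by the preceding proposition, so the set function $\rho$ attached to $\lfloor \nu \rfloor$ is identically $0$ and $\bot\lfloor \nu \rfloor = 0$. I expect the main obstacle to lie in the residuation step and the sub-maxitivity inequality, that is, in verifying that $\ominus$ interacts correctly with $\oplus$ through join-continuity; once these are secured, every remaining assertion is a formal consequence.
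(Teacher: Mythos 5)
Your proof is correct, and it in fact constructs exactly the same singular part as the paper, but by a more self-contained route. Observe that the paper sets $\bot\nu(B) = \bigwedge\{ t \in L : B \in \mathrsfs{I}_t \}$ with $\mathrsfs{I}_t = \{ B : \forall A \subset B,\ \nu^{+}(A) \leqslant \lfloor \nu \rfloor(A) \oplus t \}$; since your residual $\rho(A) = \nu^{+}(A) \ominus \lfloor \nu \rfloor(A)$ is precisely the least $t$ with $\nu^{+}(A) \leqslant \lfloor \nu \rfloor(A) \oplus t$, the set $\{ t : B \in \mathrsfs{I}_t \}$ is the up-set of $\{ \rho(A) : A \subset B \}$ and its infimum is your $\bigoplus_{A \subset B} \rho(A)$ --- the two definitions coincide. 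The difference is in how maxitivity of $\bot\nu$ and the decomposition are verified: the paper checks that $(\mathrsfs{I}_t)_t$ is a nondecreasing family of ideals, uses distributivity to see that $\{t : B \in \mathrsfs{I}_t\}$ is a filter, and then delegates maxitivity to an external result (Proposition~2.3 of the cited decomposition paper), leaving the reverse inequality and leastness as brief remarks; you instead build the residuation $\ominus$ explicitly (correctly using distributivity to make $S_{a,b}$ a filter and Lemma~\ref{lem:jcont} to show its infimum still lies in $S_{a,b}$), derive the sub-maxitivity inequality $(b_1 \oplus b_2) \ominus (a_1 \oplus a_2) \leqslant (b_1 \ominus a_1) \oplus (b_2 \ominus a_2)$, and pass to the monotone hull. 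This buys a proof that is checkable without the external reference and that makes the roles of distributivity (filter property of $S_{a,b}$) and join-continuity (that $a \oplus (b \ominus a) \geqslant b$) completely explicit; the paper's formulation via ideals is shorter and plugs into a general decomposition machinery. Your arguments for leastness, for $\bot\nu([x]) = 0$ (via $\lfloor \nu \rfloor(B') = \nu^{+}([x]) \geqslant \nu^{+}(B')$ for nonempty Borel $B' \subset [x]$), and for $\bot\lfloor \nu \rfloor = 0$ (via $\lfloor\lfloor \nu \rfloor\rfloor = \lfloor \nu \rfloor$ and outer-continuity of the regular part) are all sound and match the paper's intent.
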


\begin{proof}
We give a constructive proof for the existence of $\bot\nu$. 
Let $\bot\nu(B) = \bigwedge\{ t \in L : B \in \mathrsfs{I}_t \}$, where 
$$
\mathrsfs{I}_t := \{ B \in \mathrsfs{B} : \forall A \in \mathrsfs{B}, A \subset B \Rightarrow \nu^{+}(A) \leqslant \lfloor \nu \rfloor(A) \oplus t\}. 
$$ 
Then $(\mathrsfs{I}_t)_{t \in L}$ is a nondecreasing family of ideals of $\mathrsfs{B}$, and distributivity of $L$ implies that $\{ t \in L : B \in \mathrsfs{I}_t \}$ is a filter,  
for every $B \in \mathrsfs{B}$. From \cite[Proposition~2.3]{Poncet10}, we deduce that $\bot\nu$ is a maxitive measure.  

Since $B \in \mathrsfs{I}_t$ for $t = \nu^{+}(B)$, we have $\nu^{+}(B) \geqslant \bot\nu(B)$, thus $\nu^{+} \geqslant \lfloor \nu \rfloor \oplus \bot\nu$. 
For the reverse inequality, one may use the fact that continuity implies join-continuity (see Lemma~\ref{lem:jcont}). 
The fact that $\bot\nu$ is the \textit{least} maxitive measure satisfying Equation~(\ref{eq:decomp}) is straightforward. 

The fact that $\bot\nu([x]) = 0$ and the identity $\bot\lfloor \nu \rfloor = 0$ follow from the definition of the singular part (and, for the latter property, from the fact that $\lfloor\lfloor \nu \rfloor\rfloor = \lfloor \nu \rfloor$). 
\end{proof}

As a consequence of the previous result we have the following corollaries. The proof of the first of them is clear. 

\begin{corollary}\label{coro:reg}
Under the conditions of Theorem~\ref{thm:singpart}, the following are equivalent if $\nu$ is outer-continuous: 
\begin{enumerate}
	\item $\nu$ is the regular part of some $L$-valued maxitive measure, 
	\item the singular part of $\nu$ is identically $0$, 
	\item $\nu$ is regular.  
\end{enumerate}
\end{corollary}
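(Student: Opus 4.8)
The plan is to exploit the decomposition furnished by Theorem~\ref{thm:singpart} together with the standing hypothesis that $\nu$ is outer-continuous. First I would record that outer-continuity means exactly $\nu(B) = \bigwedge_{G \in \mathrsfs{G}, G \supset B} \nu(G) = \nu^{+}(B)$ for all $B \in \mathrsfs{B}$, so that $\nu = \nu^{+}$. Substituting this into Equation~(\ref{eq:decomp}) collapses the decomposition to
$$
\nu = \lfloor \nu \rfloor \oplus \bot\nu,
$$
and this single identity drives the whole argument.

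Next I would close the cycle of implications. For (1) $\Leftrightarrow$ (3): the proposition immediately preceding Theorem~\ref{thm:singpart} shows that the regular part of any $L$-valued maxitive measure is regular, so (1) $\Rightarrow$ (3) is immediate. Conversely, if $\nu$ is regular then it is both inner-continuous and saturated, whence $\nu(B) = \bigoplus_{K \in \mathrsfs{K}, K \subset B} \nu(\uparrow\!\! K) = \bigoplus_{K \in \mathrsfs{K}, K \subset B} \nu(K) = \bigoplus_{K \in \mathrsfs{K}, K \subset B} \nu^{+}(K) = \lfloor \nu \rfloor(B)$; thus $\nu = \lfloor \nu \rfloor$ is its own regular part, giving (3) $\Rightarrow$ (1). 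This is the equivalence the surrounding text deems ``clear''.

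For the remaining equivalence I would argue as follows. (2) $\Rightarrow$ (3): if $\bot\nu = 0$, the displayed decomposition reads $\nu = \lfloor \nu \rfloor$, and since $\lfloor \nu \rfloor$ is regular by the preceding proposition, $\nu$ is regular. (3) $\Rightarrow$ (2): having already shown that a regular $\nu$ satisfies $\nu = \lfloor \nu \rfloor$, I would invoke the last assertion of Theorem~\ref{thm:singpart}, namely $\bot\lfloor \nu \rfloor = 0$, to conclude $\bot\nu = \bot\lfloor \nu \rfloor = 0$.

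I do not anticipate any genuine obstacle, since the substantive content is carried entirely by Theorem~\ref{thm:singpart} and by the fact that the regular part is always regular. The only mildly delicate point is the verification that $\nu = \lfloor \nu \rfloor$ for a regular $\nu$: this requires combining inner-continuity with saturation (regular measures are saturated) so that $\nu(\uparrow\!\! K)$ may legitimately be replaced by $\nu^{+}(K)$ inside the supremum defining $\lfloor \nu \rfloor$.
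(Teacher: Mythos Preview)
Your proposal is correct and supplies exactly the details the paper omits: the paper's own proof consists solely of the remark that the corollary is ``clear,'' and your argument---using $\nu=\nu^{+}$ to rewrite the decomposition, invoking the proposition that $\lfloor\nu\rfloor$ is always regular, verifying $\nu=\lfloor\nu\rfloor$ for regular $\nu$ via inner-continuity plus saturation plus $\nu=\nu^{+}$, and then applying $\bot\lfloor\nu\rfloor=0$ from Theorem~\ref{thm:singpart}---is precisely the intended unpacking. The only microscopic refinement is that in your chain $\nu(K)=\nu^{+}(K)$ you are using outer-continuity (already recorded as $\nu=\nu^{+}$) rather than saturation alone, which you acknowledge implicitly but might state explicitly.
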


\begin{corollary}\label{coro:res}
Under the conditions of Theorem~\ref{thm:singpart}, the following are equivalent if $\nu$ is outer-continuous: 
\begin{enumerate}
	\item\label{res1} $\nu$ is the singular part of some $L$-valued maxitive measure, 
	\item\label{res2} the regular part of $\nu$ is identically $0$, 
	\item\label{res3} $\nu$ satisfies $\nu(K) = 0$ for all $K \in \mathrsfs{K}$. 
\end{enumerate}
\end{corollary}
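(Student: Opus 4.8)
The plan is to prove the cyclic chain of implications (\ref{res1}) $\Rightarrow$ (\ref{res3}) $\Rightarrow$ (\ref{res2}) $\Rightarrow$ (\ref{res1}). Throughout, I would use the standing hypothesis that $\nu$ is outer-continuous in the form $\nu = \nu^{+}$, which is merely a restatement of the defining identity $\nu(B) = \bigwedge_{G \in \mathrsfs{G}, G \supset B} \nu(G)$. First I would record the equivalence (\ref{res2}) $\Leftrightarrow$ (\ref{res3}), which is immediate from the definition $\lfloor \nu \rfloor(B) = \bigoplus_{K \in \mathrsfs{K}, K \subset B} \nu^{+}(K)$: if $\lfloor \nu \rfloor = 0$ then taking $B = K$ forces $\nu^{+}(K) = 0$ for every $K \in \mathrsfs{K}$, and conversely if $\nu^{+}(K) = 0$ for all $K \in \mathrsfs{K}$ then every supremum defining $\lfloor \nu \rfloor$ collapses to $0$. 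Since $\nu = \nu^{+}$ gives $\nu^{+}(K) = \nu(K)$, this is precisely the statement that $\nu$ vanishes on $\mathrsfs{K}$.

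For (\ref{res2}) $\Rightarrow$ (\ref{res1}) I would feed $\lfloor \nu \rfloor = 0$ directly into the decomposition of Theorem~\ref{thm:singpart}. Equation~(\ref{eq:decomp}) reads $\nu^{+} = \lfloor \nu \rfloor \oplus \bot\nu$, so $\lfloor \nu \rfloor = 0$ yields $\nu^{+} = \bot\nu$; combined with outer-continuity $\nu = \nu^{+}$ this gives $\nu = \bot\nu$. Thus $\nu$ is the singular part of itself, and (\ref{res1}) holds with witnessing measure $\mu = \nu$.

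The substantive step is (\ref{res1}) $\Rightarrow$ (\ref{res3}), where I expect the only real work. Suppose $\nu = \bot\mu$ for some $L$-valued maxitive measure $\mu$. Applying Lemma~\ref{lem:reg0} to the maxitive measure $\nu$ itself gives $\nu^{+}(K) = \bigoplus_{x \in K} \nu^{+}([x])$ for every $K \in \mathrsfs{K}$. By outer-continuity $\nu^{+}([x]) = \nu([x]) = \bot\mu([x])$, and the last assertion of Theorem~\ref{thm:singpart} states precisely that the singular part $\bot\mu$ vanishes on every set of the form $[x]$. Hence each term $\nu^{+}([x])$ is $0$, so $\nu(K) = \nu^{+}(K) = \bigoplus_{x \in K} 0 = 0$ for all $K \in \mathrsfs{K}$, which is (\ref{res3}).

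The obstacle to anticipate is exactly this last implication: a priori one only knows that $\bot\mu$ kills the small equivalence classes $[x]$, whereas (\ref{res3}) demands that it kill all compact Borel sets, which are typically far larger. The resolution is that Lemma~\ref{lem:reg0} reduces the value on a compact set to a supremum of values over such classes; but since that reduction is stated for $\nu^{+}$ rather than for $\nu$, the outer-continuity hypothesis is exactly what is needed to pass from $\nu^{+}$ back to $\nu$ and to identify $\nu^{+}([x])$ with $\bot\mu([x])$. This explains why the corollary is stated only under the outer-continuity assumption.
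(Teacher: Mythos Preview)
Your proof is correct. For the equivalence (\ref{res2}) $\Leftrightarrow$ (\ref{res3}) and the implication (\ref{res2}) $\Rightarrow$ (\ref{res1}) you argue exactly as the paper does (the paper simply declares these ``straightforward'').

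For the nontrivial implication you take a genuinely shorter route than the paper. The paper proves (\ref{res1}) $\Rightarrow$ (\ref{res2}) by first establishing the auxiliary identity $\nu = \bot\nu$: it argues that $\lfloor \bot\tau \rfloor$ is a regular measure dominated by $\tau^{+}$, hence dominated by $\lfloor \tau \rfloor$, and then runs the chain
\[
\tau^{+} = \lfloor \tau \rfloor \oplus \bot\tau \leqslant \lfloor \tau \rfloor \oplus (\bot\tau)^{+} = \lfloor \tau \rfloor \oplus \lfloor \bot\tau \rfloor \oplus \bot(\bot\tau) = \lfloor \tau \rfloor \oplus \bot(\bot\tau) \leqslant \tau^{+}
\]
together with the minimality of $\bot\tau$ to conclude $\bot(\bot\tau) \geqslant \bot\tau$ and hence $\nu = \bot\nu$. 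Only then does it invoke $\bot\nu([x]) = 0$ from Theorem~\ref{thm:singpart} to get $\nu^{+}([x]) = 0$ and $\lfloor \nu \rfloor = 0$. You bypass this entire detour: since $\nu = \bot\mu$ already, the vanishing statement $\bot\mu([x]) = 0$ of Theorem~\ref{thm:singpart} applies directly to $\mu$, and Lemma~\ref{lem:reg0} plus outer-continuity finish the job. Your argument is more elementary; the paper's argument has the side benefit of proving that the singular-part operator is idempotent on outer-continuous measures, a fact of independent interest but not needed for the corollary.
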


\begin{proof}
It is straightforward that (\ref{res3}) $\Leftrightarrow$ (\ref{res2}) $\Rightarrow$ (\ref{res1}). Let us show that (\ref{res1}) $\Rightarrow$ (\ref{res2}), so assume that $\nu = \bot\tau$, for some $L$-valued maxitive measure $\tau$. Note that $\bot(\bot\tau) \geqslant \bot\tau$, for 
$\lfloor \bot\tau \rfloor$ is regular and less than $\tau^{+}$, hence is less than $\lfloor \tau \rfloor$. Thus, $\tau^{+} = \lfloor \tau \rfloor \oplus \bot\tau \leqslant \lfloor \tau \rfloor \oplus (\bot\tau)^{+} =  \lfloor \tau \rfloor \oplus \lfloor \bot\tau \rfloor \oplus \bot(\bot\tau) = \lfloor \tau \rfloor \oplus \bot(\bot\tau) \leqslant \tau^{+}$. This gives $\tau^{+} = \lfloor \tau \rfloor \oplus \bot(\bot\tau)$, hence $\bot(\bot\tau) \geqslant \bot\tau$. 
Now $\nu = \nu^{+} \geqslant \bot\nu = \bot(\bot\tau) \geqslant \bot\tau = \nu$, so that $\nu = \bot\nu$. 
%If $\mathrsfs{I}_t$ denotes the ideal of $\mathrsfs{B}$ defined in the proof of Theorem~\ref{thm:singpart}, then $[x] \in \mathrsfs{I}_t$ for all $t \in L$, so that $\nu([x]) = \bot\nu([x]) = 0$, for all $x \in E$. 
Since $\nu$ is outer-continuous, $\nu^{+}([x]) = \nu([x]) = \bot\nu([x]) = 0$ for all $x \in E$, so  
$\lfloor \nu \rfloor (B) = \bigoplus_{x \in B} \nu^{+}([x]) = 0$, for all $B \in \mathrsfs{B}$. 
\end{proof}

%\textcolor{red}{AJOUTER UN COROLLAIRE}

\begin{corollary}
Assume that $L$ is a continuous conditionally-complete distributive lattice. 
Then, on a metrizable space, 
every $L$-valued optimal measure $\nu$ can be decomposed as %. Then there exists a smallest maxitive measure $\bot\nu$ on $\mathrsfs{B}$, called the \textit{singular part} of $\nu$, such that the decomposition 
\begin{equation}\label{eq:decomp2}
\nu = \lfloor \nu \rfloor \oplus \bot\nu,  
\end{equation}
where $\lfloor \nu \rfloor$ is a regular optimal measure and $\bot\nu$ is an optimal measure satisfying $\bot\nu(K) = 0$ for all $K \in \mathrsfs{K}$. 
\end{corollary}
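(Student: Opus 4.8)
The plan is to set up the decomposition formally, then to verify that each of the two pieces is an optimal measure, the singular piece demanding the most work. Since $E$ is metrizable it is Hausdorff, hence sober, hence quasisober, so Theorem~\ref{thm:singpart} applies and gives $\nu^{+} = \lfloor \nu \rfloor \oplus \bot\nu$. By Proposition~\ref{prop:metric} the optimal measure $\nu$ is outer-continuous, i.e.\ $\nu = \nu^{+}$, and substituting yields Equation~\eqref{eq:decomp2}. The preceding proposition on regular parts shows that $\lfloor \nu \rfloor$ is regular with density $c^{+}\colon x \mapsto \nu^{+}([x]) = \nu(\{x\})$, where $[x]=\{x\}$ because $E$ is $T_1$. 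As a maxitive measure is optimal exactly when it is continuous from above, it will remain to prove that $\lfloor \nu \rfloor$ and $\bot\nu$ are continuous from above and that $\bot\nu$ vanishes on $\mathrsfs{K}$.

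The decisive observation I would isolate is that, for an optimal measure on a metric space, the set $\{ s \not\gg c^{+} \}$ is \emph{finite} for every $s \gg 0$. Indeed, were it infinite I would choose distinct points $x_1, x_2, \dots$ in it; the sets $B_n = \{ x_k : k \geqslant n \}$ are Borel, decrease to $\emptyset$, and satisfy $\nu(B_n) = \bigoplus_{k \geqslant n} c^{+}(x_k)$ by $\sigma$-maxitivity together with outer-continuity. Continuity from above gives $\bigwedge_n \nu(B_n) = 0$, so by the interpolation property there is some $n_0$ with $s \gg \nu(B_{n_0}) \geqslant c^{+}(x_{n_0})$, contradicting $x_{n_0} \in \{ s \not\gg c^{+} \}$.

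Granting this, continuity from above of $\lfloor \nu \rfloor(B) = \bigoplus_{x \in B} c^{+}(x)$ (Lemma~\ref{lem:reg0}) is immediate. Given a decreasing sequence with intersection $B$ and any $s \gg \lfloor \nu \rfloor(B)$, one has $B \subseteq G_s := \{ s \gg c^{+} \}$, whose complement $K_s$ is finite; since the sets $B_n \cap K_s$ decrease inside the finite set $K_s$ down to $B \cap K_s = \emptyset$, some $B_{n_0}$ lies in $G_s$, whence $\lfloor \nu \rfloor(B_{n_0}) \leqslant s$. Letting $s$ range over $\{ s \gg \lfloor \nu \rfloor(B) \}$ and invoking continuity of $L$ gives $\bigwedge_n \lfloor \nu \rfloor(B_n) \leqslant \lfloor \nu \rfloor(B)$, so $\lfloor \nu \rfloor$ is a regular optimal measure.

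For the singular part I would first treat $\bot\nu(K) = 0$ on $K \in \mathrsfs{K}$ by restricting $\nu$ to the Borel sets of the compact (hence separable metrizable) space $K$: this restriction is again optimal, thus regular by Corollary~\ref{coro:sclc}, so $\nu(A) = \bigoplus_{x \in A} c^{+}(x) = \lfloor \nu \rfloor(A)$ for all $A \subseteq K$, which shows $K \in \mathrsfs{I}_0$ and hence $\bot\nu(K) = 0$. It then remains to show that $\bot\nu$ is continuous from above. Fixing a decreasing sequence with intersection $B$ and writing $\ell = \bigwedge_n \bot\nu(B_n)$, the inequality I must establish is $\bot\nu(B) \geqslant \ell$, equivalently $B \notin \mathrsfs{I}_t$ whenever $t \not\geqslant \ell$; for such $t$ one has $t \not\geqslant \bot\nu(B_n)$, so $B_n \notin \mathrsfs{I}_t$, producing witnesses $A_n \subseteq B_n$ with $\nu(A_n) \not\leqslant \lfloor \nu \rfloor(A_n) \oplus t$. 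The hard part will be exactly here: I must build from the $A_n$ a single set $A \subseteq B$ still violating $\nu(A) \leqslant \lfloor \nu \rfloor(A) \oplus t$. The natural candidate is $A = \limsup_n A_n = \bigcap_m \bigcup_{k \geqslant m} A_k$, which does lie in $B$; continuity from above of $\nu$ and of $\lfloor \nu \rfloor$ (just established) together with the join-continuity of $L$ (Lemma~\ref{lem:jcont}) express $\nu(A)$ and $\lfloor \nu \rfloor(A)$ as filtered infima along $C_m = \bigcup_{k \geqslant m} A_k$. The delicate point I anticipate is ensuring that the singular excess of the $A_n$ survives to the limit rather than being absorbed into the regular part; the finiteness of the super-level sets of $c^{+}$ from the second step, which forces the singular mass onto diffuse rather than atomic sets, is the tool I would use to exclude this absorption. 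Once $\bot\nu$ is continuous from above it is optimal, completing the argument.
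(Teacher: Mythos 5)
Your setup (quasisobriety of a metrizable space, Theorem~\ref{thm:singpart}, and $\nu=\nu^{+}$ via Proposition~\ref{prop:metric}) matches the paper, and your arguments for the regularity and continuity from above of $\lfloor\nu\rfloor$ and for $\bot\nu(K)=0$ (via restriction to the compact separable metrizable subspace $K$ and Corollary~\ref{coro:sclc}, rather than the paper's appeal to Corollary~\ref{coro:res}) are sound. But there is a genuine gap exactly where you flag one: continuity from above of $\bot\nu$ is never established. The $\limsup$ construction does not close it: from witnesses $A_n\subset B_n$ with $\nu(A_n)\not\leqslant\lfloor\nu\rfloor(A_n)\oplus t$ you would need $\nu(A)\not\leqslant\lfloor\nu\rfloor(A)\oplus t$ for $A=\bigcap_m\bigcup_{k\geqslant m}A_k$, but passing to the unions $C_m=\bigcup_{k\geqslant m}A_k$ increases $\lfloor\nu\rfloor$ as well as $\nu$, so the inequality $\nu(C_m)\not\leqslant\lfloor\nu\rfloor(C_m)\oplus t$ is not inherited from $A_m$, and the finiteness of the sets $\{s\not\gg c^{+}\}$ does not control the diffuse part of $\lfloor\nu\rfloor$ on $C_m\setminus A_m$. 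As written, the claim that $\bot\nu$ is optimal is unproved.

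The missing step is in fact easy, and it is the route the paper takes: any maxitive measure $\mu$ dominated by a continuous-from-above maxitive measure $\nu$ is itself continuous from above. Indeed, for $B_n\downarrow B$ one has $\mu(B_n)\leqslant\mu(B)\oplus\mu(B_n\setminus B)\leqslant\mu(B)\oplus\nu(B_n\setminus B)$; since $B_n\setminus B\downarrow\emptyset$, continuity from above of $\nu$ gives $\bigwedge_n\nu(B_n\setminus B)=0$, and join-continuity of $L$ (Lemma~\ref{lem:jcont}) then yields $\bigwedge_n\mu(B_n)\leqslant\mu(B)$. Applying this to $\mu=\bot\nu\leqslant\nu^{+}=\nu$ (and, if you wish, to $\mu=\lfloor\nu\rfloor$, which would shorten your treatment of the regular part and make your finiteness observation unnecessary) shows that $\bot\nu$ is continuous from above, hence optimal, hence outer-continuous by Proposition~\ref{prop:metric}; the paper then deduces $\bot\nu(K)=0$ for all $K\in\mathrsfs{K}$ from Corollary~\ref{coro:res}, though your direct argument for that point also works.
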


\begin{proof}
By Proposition~\ref{prop:metric}, the optimal measure $\nu$ is outer-continuous, i.e.\ $\nu = \nu^{+}$. 
Let $\tau = \bot \nu$. Since $\tau \leqslant \nu^{+}  = \nu$, it is easily seen that the maxitive measure $\tau$ is optimal, hence outer-continuous. Applying Corollary~\ref{coro:res} to $\tau$, we deduce that $\tau$, as the singular part of $\nu$, satisfies $\tau(K) = 0$ for all $K \in \mathrsfs{K}$. 
\end{proof}

%%%%%%%%%%%%%%%%%%%%%%%%%%%%%%%%%%%%%%%%%%%%%%%%
%%%%%%%%%%%%%%%%%%%%%%%%%%%%%%%%%%%%%%%%%%%%%%%%
%%%%%%%%%%%%%%%%%%%%%%%%%%%%%%%%%%%%%%%%%%%%%%%%
%%%%%%%%%%%%%%%%%%%%%%%%%%%%%%%%%%%%%%%%%%%%%%%%
\section{Conclusion and perspectives}\label{sec:con}

It would be interesting to reformulate the results of this work in terms of Baire subsets rather than Borel subsets.  

\begin{acknowledgements}
I would like to gratefully thank Prof.\ Jimmie D.\ Lawson for his comments and valuable suggestions; they enabled me to correctly capture the  non-Hausdorff setting. I am also indebted in an anonymous referee whose remarks greatly improved the manuscript. 
\end{acknowledgements}

\bibliographystyle{plain}%{apalike}
%\bibliography{C:/LocalTex/BIBLIO}

\def\cprime{$'$} \def\cprime{$'$} \def\cprime{$'$} \def\cprime{$'$}
  \def\ocirc#1{\ifmmode\setbox0=\hbox{$#1$}\dimen0=\ht0 \advance\dimen0
  by1pt\rlap{\hbox to\wd0{\hss\raise\dimen0
  \hbox{\hskip.2em$\scriptscriptstyle\circ$}\hss}}#1\else {\accent"17 #1}\fi}
  \def\ocirc#1{\ifmmode\setbox0=\hbox{$#1$}\dimen0=\ht0 \advance\dimen0
  by1pt\rlap{\hbox to\wd0{\hss\raise\dimen0
  \hbox{\hskip.2em$\scriptscriptstyle\circ$}\hss}}#1\else {\accent"17 #1}\fi}

\end{document}